\newcommand{\wt}[1]{\widetilde{#1}}
\newcommand{\abs}[1]{\left| #1\right|}
\newcommand{\mc}[1]{\mathcal{#1}}
\newcommand{\N}{\mathbb{N}}
\newcommand{\R}{\mathbb{R}}
\newcommand{\Z}{\mathbb{Z}}
\newtheorem{thm}{Theorem}[section]
\newtheorem{definition}[thm]{Definition}
\newtheorem{remark}[thm]{Remark}
\newtheorem{theorem}[thm]{Theorem}
\newtheorem*{oseledec*}{Oseledec Theorem}
\newtheorem{cor}[thm]{Corollary}
\newtheorem{claim}[thm]{Claim}
\newtheorem{lemma}[thm]{Lemma}
\newtheorem{prop}[thm]{Proposition}
\newtheorem{question}{Question}
\def\moverlay{\mathpalette\mov@rlay}
\def\mov@rlay#1#2{\leavevmode\vtop{%
   \baselineskip\z@skip \lineskiplimit-\maxdimen
   \ialign{\hfil$\m@th#1##$\hfil\cr#2\crcr}}}
\newcommand{\charfusion}[3][\mathord]{
    #1{\ifx#1\mathop\vphantom{#2}\fi
        \mathpalette\mov@rlay{#2\cr#3}
      }
    \ifx#1\mathop\expandafter\displaylimits\fi}
\let\ul\underline
\let\wh\widehat
\newcommand{\nocontentsline}[3]{}
\newcommand{\tocless}[2]{\bgroup\let\addcontentsline=\nocontentsline#1{#2}\egroup}
\begin{document}

\author{Snir Ben Ovadia and Jonathan DeWitt}

\title{Anosov diffeomorphisms of open surfaces}

\newcommand{\Addresses}{{
  \bigskip
  \footnotesize

  Snir Ben Ovadia, \textsc{Department of Mathematics, Pennsylvania State University, State College, Pennsylvania 16801, United States}. \textit{E-mail address}: \texttt{snir.benovadia@psu.edu}
  
  Jonathan DeWitt, \textsc{Department of Mathematics, University of Maryland, College Park, Maryland 20742, United States}. \textit{E-mail address}: \texttt{dewitt@umd.edu}
}}

\date{}

\begin{abstract}
We study the existence of Anosov diffeomorphisms on complete open surfaces. We show that under the assumptions of density of periodic points and uniform geometry that such diffeomorphisms have a system of Margulis measures, which are a holonomy invariant and dynamically invariant system of measures along the stable and unstable leaves.
\end{abstract}

\maketitle

\tableofcontents

\section{Introduction}

A diffeomorphism $f\colon M\to M$ of a closed Riemannian manifold is called Anosov if it preserves a continuous invariant splitting $TM=E^u\oplus E^s$ and $Df$ uniformly expands vectors in $E^u$ and uniformly contracts vectors in $E^s$. These bundles are called the stable and unstable bundles of $f$ and are independent of the particular choice of metric on $M$. Anosov diffeomorphisms are of perennial interest in smooth dynamics as they are one of the most fundamental examples of a dynamical system with chaotic behavior.

In this paper, we consider the existence of Anosov diffeomorphisms on non-compact Riemannian surfaces. We shall make this notion precise, but informally we mean a diffeomorphism $f\colon M\to M$ of a complete Riemannian surface, such that $f$ preserves an invariant hyperbolic splitting.
 We study such maps under two assumptions. The first assumption asserts that the periodic points of $f$ are dense, which implies that the stable and unstable manifolds of periodic points are dense. Our second assumption is that the geometry of $M$ and $f$ is \emph{uniform} in the sense explained by Definition \ref{defn:uniform_anosov_diffeo}. Before we proceed we will make precise the setting that we are considering.

\subsection{Definition of Uniform Anosov Diffeomorphisms}\label{sec:geometry}
We now formulate a notion of an Anosov diffeomorphism on an open Riemannian surface with geometric qualities that permits analysis without unnecessary technical obstacles.

\begin{definition}\label{def:anosov_diffeo}
If $f\colon M\to M$ is a diffeomorphism of a Riemannian manifold $M$, then we say that $f$ is Anosov if there is a continuous $df$-invariant splitting $TM=E^u\oplus E^s$ such that the following holds. There exist $C,\lambda>0$ such that for all $v\in E^u\setminus \{0\}$ and all $w\in E^s\setminus \{0\}$,
\[
\|d_xf^nv\|\ge Ce^{\lambda n}\|v\| \text{ and } \|d_xf^n w\|\le Ce^{-\lambda n}\|w\|,
\]
for all $n\in \mathbb{N}$.

\end{definition}

In the case that $M$ is compact, it turns out that being an Anosov diffeomorphism is independent of the metric and can be characterized without reference to any metric. In the non-compact case there is more subtlety. For this we have the notion of a uniform manifold, which is a convenient generalization of the notion of bounded geometry.

The notion of what it means for $M$ to have bounded geometry deserves some discussion (See \cite{eldering2024notions} for more detail). The usual definition of bounded geometry essentially requires two things: positive injectivity radius and bounds on the covariant derivative of the curvature tensor, which require the manifold and metric to be $C^2$. Most results concerning Anosov diffeomorphisms hold when the diffeomorphism is $C^{1+\text{H\"older}}$, so $C^2$ regularity restricts the results that one can prove. In the case of low regularity, one can instead work with \emph{uniform manifolds}, we now quote a definition from Eldering \cite{eldering2024notions}:

\begin{definition}
(Uniform manifold). We say that a complete manifold $M$ with atlas $\mc{A}=\{(\phi_i\colon U_i\to \R^n)\vert i\in I\}$ is uniform of order $k\ge 1$ if
\begin{enumerate}
\item
there exists one uniform $\delta>0$ such that for each $x\in M$ there exists a coordinate chart $\phi_i$ that covers a ball of radius $\delta$ around $x$, i.e. 
\[
B(\phi_i(x), \delta)\subset \phi_i(U_i);
\]
\item
There is one global bound $B_k$ such that all transition maps are uniformly bounded in $C^k$ norm:
\[
\text{for all }i,j\in I\colon \|\phi_j\circ \phi_i^{-1}\|_{C^k}\le B_k.
\]
\end{enumerate}
We call these charts \emph{uniform charts}. 
\end{definition}

With this notion, we can formulate our setting.

\begin{definition}\label{defn:uniform_anosov_diffeo}
 We call an Anosov diffeomorphism of a Riemannian surface satisfying the following a \emph{uniform} Anosov diffeomorphism:
\begin{enumerate}
\item  The Anosov splitting is uniformly continuous with respect to the metric.
\item  The angles between the subspaces in the Anosov splitting are uniformly bounded below.
\item  The differential of $df$ on $E^s$ and $E^u$ is bounded, cobounded, and uniformly continuous.
\item  $M$ is complete and every stable and unstable manifold is complete in the pullback metric.
\item  $M$ is a uniform manifold.
\end{enumerate}
\end{definition}
Note that a manifold supporting a uniform Anosov diffeomorphism comes with an atlas of uniform charts.

\begin{remark}\label{rem:angle_bound}
As in the compact setting, one can deduce (2)  Definition \ref{defn:uniform_anosov_diffeo} from (1), (3), (4), (5). However, we leave this as an assumption in the definition for clarity.
\end{remark}

 \subsection{Main Result}
  Our main result is that a uniform Anosov diffeomorphism has a system of ``Margulis measures" defined on its stable and unstable leaves as long as it has sufficiently many periodic points.

 \begin{thm}\label{thm:main_thm}
    Suppose that $f\colon M\to M$ is a $C^1$ uniform Anosov diffeomorphism of a complete Riemannian surface $M$ and that $f$ has dense periodic points. 
    Then there exists $h>0$ such that $M$ admits a system of Margulis measures along its unstable foliation. These are measures $\mu^u$ defined on each unstable leaf such that:
    \begin{enumerate}
        \item The measures $\mu^u$ have full support on each leaf.
        \item The measures $\mu^u$ are invariant under stable holonomies.
        \item The measures are conformally invariant: $\mu^u_x\circ f^{-1}=e^{-h}\mu^u_{f(x)}$.
        \item 
        The measures assign infinite mass to rays in periodic unstable leaves.
    \end{enumerate}
 \end{thm}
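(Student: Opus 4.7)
The plan is to adapt Margulis's classical symbolic-dynamics construction of the unstable conditional measures to the non-compact setting, using uniform geometry in place of global compactness and density of periodic points to supply the combinatorial rigidity. In broad outline, the measures are built first on the unstable plates of a small Markov rectangle around a reference periodic point as eigenmeasures of a transfer operator, and then extended to every unstable leaf by forward iteration.

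First, uniform Anosov geometry yields a \emph{uniform local product structure}: a constant $\delta>0$ such that any two points within distance $\delta$ have a well-defined bracket $[x,y]=W^s_\delta(x)\cap W^u_\delta(y)$, with uniform transversality and uniform size of the local plates, as well as a uniform shadowing lemma. Combining this with density of periodic points, I would cover $M$ by a locally finite family of periodic-point-centered rectangles $\{R_i\}$ and refine them, in the Bowen--Sinai manner, into a countable Markov partition in which the transitions $R_i\to R_j$ are encoded by stable/unstable sub-rectangles. Uniform hyperbolicity then translates into uniform bounded-distortion estimates for the unstable Jacobian $J^u$ along orbits compatible with this coding.

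Next I would carry out the transfer-operator step. Define
\[
(\mc{L}\phi)(x)=\sum_{f(y)=x} J^u(y)^{-1}\phi(y)
\]
on continuous functions on unstable plates, and define $h>0$ intrinsically as the exponential growth rate of periodic orbits in a fixed reference rectangle $R_0$. The key analytic task is to produce a positive eigenmeasure $\nu$ with $\mc{L}^*\nu=e^h\nu$, which I would construct as a weak-$*$ limit of normalized pull-backs $e^{-nh}(\mc{L}^*)^n\delta_{x_0}$ anchored at a periodic point $x_0\in R_0$. Restricting $\nu$ to an individual unstable plate yields a local unstable measure inside $R_0$; the product form of the Markov rectangles, together with the eigenvalue equation, encode stable-holonomy invariance and conformality automatically. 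To define $\mu^u_x$ globally on an arbitrary leaf $W^u(x)$, pull a compact segment $I\subset W^u(x)$ back into $R_0$ by some $f^{-n}$ (possible by expansion on $W^u$), apply the already constructed local measure there, and push forward with conformal factor $e^{nh}$; independence of the choice of $n$ is precisely the eigenmeasure equation. Properties (1)--(3) are then direct consequences of the construction, while for property (4) one fixes a periodic point $p$ of period $n$, a ray $\gamma\subset W^u(p)$ and a fundamental domain $D\subset\gamma$ for $f^n$ with $\mu^u(D)>0$ (by full support); since $f^n$ fixes $p$ and expands $\gamma$, one has $\gamma\supseteq\bigcupdot_{k\geq 0} f^{kn}(D)$, and conformality gives $\mu^u(f^{kn}(D))=e^{knh}\mu^u(D)$, a geometric series summing to infinity because $h>0$.

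The main obstacle is the eigenmeasure construction itself. In the closed-manifold setting one extracts $\nu$ from a direct application of Schauder--Tychonoff on a compact convex set of probability densities, but on an open manifold one must rule out escape of mass to infinity in the limit $n\to\infty$. I expect that the uniform geometry and uniform hyperbolicity provide the tightness needed to prevent this, and that density of periodic points (via a specification-like statement it should imply) forces the limit to be nontrivial and independent of the anchor $x_0$ up to a positive scalar. Turning these expectations into a rigorous argument, and then verifying that the locally defined measures glue consistently across the countably many rectangles of the Markov structure, is where the bulk of the technical work lies.
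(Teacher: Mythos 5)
The proposal captures the broad outline---Markov partition from uniform geometry, a transfer-operator eigenmeasure on the coding space, push-forward to leaf measures---but there are three substantive problems, two of them fatal to the argument as sketched.

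First, the transfer operator you wrote down is the wrong one. You defined $(\mc{L}\phi)(x)=\sum_{f(y)=x}J^u(y)^{-1}\phi(y)$, i.e.~the Ruelle operator with geometric potential $-\log J^u$. Eigenmeasures of that operator give the SRB/physical conditional measures, whose Radon--Nikod\'ym cocycle under $f$ is the variable Jacobian, not a constant. To get property (3), namely $\mu^u_x\circ f^{-1}=e^{-h}\mu^u_{f(x)}$ with a single constant $h$, one must use the potential $\phi\equiv 0$; this is exactly the operator $L_0 h=\sum_{\sigma\underline{S}=\underline{R}}h(\underline{S})$ used in the paper. The clue is that you also want $h$ to be the periodic-orbit growth rate (Gurevich entropy), which is the pressure of the zero potential.

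Second, the eigenmeasure construction via a weak-$*$ limit of $e^{-nh}(\mc{L}^*)^n\delta_{x_0}$ is precisely where the non-compactness bites, and you flag but do not resolve this. The paper does not try to make this limit converge by tightness arguments. Instead it passes to the countable Markov shift, proves finite Gurevich entropy from the bounded-degree property of the Markov partition, and then invokes the transient/recurrent dichotomy for countable Markov shifts (Sarig, Cyr). In the recurrent case one has a Ces\`aro-type renewal limit; in the transient case the naive limit of normalized pull-backs of a Dirac mass anchored at a fixed periodic point can actually escape to infinity, and one must instead send the ``target'' symbol $\omega_{n_k}$ off to infinity along a carefully chosen sequence. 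The existence of a positive harmonic function in both regimes is a non-trivial theorem of Cyr and Sarig, and without appealing to it (or reproving it) your construction simply stalls on the tightness question you raised.

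Third, your claim that ``the product form of the Markov rectangles ... encode stable-holonomy invariance ... automatically'' is true only for holonomies that stay inside a single Markov rectangle. The decisive technical difficulty---and in the paper by far the longest argument (Theorem \ref{GlobContTr})---is to prove holonomy invariance across rectangle boundaries, where the symbolic coding changes discontinuously and the coding map is not injective. This is carried out by exploiting the explicit combinatorial form of the harmonic function as a limit of ratios of word counts $Z_i'(R,\cdot)$, reinterpreted dynamically as intersection counts of unstable arcs with pulled-back stable arcs, and showing that two holonomy-related arcs in adjacent rectangles have intersection counts differing by at most $2$. Nothing in your sketch produces this; an arbitrary harmonic function on the shift would not make the pushed-forward measures holonomy invariant at rectangle boundaries, so the \emph{specific} construction of $\psi$ is load-bearing. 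Your argument for property (4) (fundamental domain plus conformality) is fine and matches the paper's Lemma \ref{halfArcInf} in spirit.
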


The proof of the above theorem makes use of a coding of the diffeomorphism as a shift on countably many symbols. One interesting aspect of the proof is that it gives an equivalence between Margulis measures on the shift and Margulis measures for the diffeomorphism. In general, there is not a unique Margulis measure and any such measure is determined by a choice of a harmonic function, as described in Section \ref{sec:margulis_measures} and in particular Corollary \ref{cor:margulis_measures_locally_holonomy_invariant}, where Theorem \ref{thm:main_thm} is concluded. It is interesting that this correspondence between measures for the diffeomorphism and the shift holds both in the cases where the shift is transient and recurrent.

\subsection{Motivation}
A major motivation for this work and a possible source of an example of an Anosov diffeomorphism of an open surface is the work of Rodriguez Hertz, Rodriguez Hertz, and Ures.
In \cite{hertz2008partial}, those authors undertook to classify the $3$-manifolds that admit a conservative, non-ergodic, partially hyperbolic diffeomorphisms. Although they do not complete the classification of such diffeomorphisms, they obtain a trichotomy in \cite[Thm~1.8]{hertz2008partial}. One possibility in the trichotomy stipulates that the complement of the open accessibility classes forms an invariant lamination without compact leaves but with periodic boundary leaves. On a periodic boundary leaf, the partially hyperbolic dynamics restricts to an Anosov diffeomorphism of an open surface satisfying the hypotheses we consider in this paper. It remains unknown whether this part of the trichotomy can occur.

In fact, a closely related question motivated by their work has been stated in the literature. 
\begin{question}\cite[Question 3.13]{carrasco2018partially}\label{question:main_question}
Let $L$ be a complete immersed surface in a $3$-manifold, such that there is an Anosov dynamics on $L$ where
\begin{enumerate}
\item
each stable and unstable manifold is complete, and the angles between stable and unstable manifolds are bounded;
\item
periodic points are dense with the intrinsic topology; and
\item
the stable and unstable manifold of each periodic point are dense in $L$ with the intrinsic topology.
\end{enumerate}
Is $L$ the $2$-torus?
\end{question}
\noindent While our results give only a partial answer this question, developing tools to address this question is one of the main motivations for our work as the construction of Margulis measures can help address this problem.

Hiraide gave an elegant proof that a codimension 1 Anosov diffeomorphism is a diffeomorphism of a torus using Margulis measures \cite{hiraide2001simple}. See \cite{buzzi2022dichotomy} for a more recent construction and application to Anosov flows. Here we illustrate their use with the following application.

\begin{thm}\label{thm:main}
Suppose that $f\colon M\to M$ is a $C^1$ uniform Anosov diffeomorphism of a surface that admits a system of Margulis measures as in Theorem \ref{thm:main_thm} giving infinite mass to each unstable leaf. Then $M$ is closed.
\end{thm}
In particular, if one could construct such Margulis measures for invariant leaves of the lamination appearing in \ref{question:main_question}, it would give an affirmative answer to the question. 

\subsection{Examples of Anosov Diffeomorphisms of Open Surfaces and Manifolds} Let us consider some common examples of Anosov diffeomorphisms on open manifolds so that we can see the importance of the metric.

Let us first consider a simple construction that illuminates the role the metric plays. Consider a translation $T\colon \R\to \R$. As $T$ has no recurrence, it is straightforward to find a metric $g$ on $\R$ such that with respect to this metric $\|DT\|>c>1$: for example, take $g=e^{-t}dx^2$. However, for any metric on $\R$ for which $DT$ is uniformly bounded below, $\R$ cannot be complete as $T$ would then have a fixed point. Of course, this map isn't ``Anosov" in the sense that it has a splitting into two bundles, but it shows that there are issues that arise when one tries to make a map uniformly hyperbolic by changing the metric.

Let us now consider examples on surfaces.
There are several constructions of Anosov diffeomorphisms on $\R^2$ in the sense of Definition \ref{def:anosov_diffeo}. These examples differ in terms of what types of metrics are put on $\R^2$. The most obvious is a hyperbolic linear map, for which the metric has bounded geometry. However, there are some substantially more interesting examples. In 1973, White \cite{white1973anosov} showed that there is a complete metric and an invariant splitting of $T\R^2$, which makes the translation $(x,y)\mapsto (x+1,y)$ into an Anosov diffeomorphism. In White's example, the unstable foliation is comprised of a translation invariant collection of Reeb components. 
In addition, the metric that White constructs has unbounded curvature. 
In 1977, Mendes \cite{mendes1977anosov} studied Anosov diffeomorphisms of $\R^2$ and conjectured that any Anosov diffeomorphism of $\R^2$ with respect to a complete metric is topologically conjugate either to a translation or a hyperbolic linear map. 
For about forty years no new work on the conjecture appeared. However, recently it was shown that the Mendes conjecture was not correct.
 Matsumoto showed that there exists an Anosov diffeomorphism of $\R^2$ with respect to a complete metric that is not conjugate to either of these examples \cite{matsumoto2021example}. 
However, work of Groisman and Nitecki shows that the conjecture is true if one assumes that $f$ is the time one map of a flow \cite{groisman2021foliations}. 
An important aspect of this work is that it does not use any assumptions on the uniformity of the geometry of the diffeomorphisms being studied.

A final useful example to consider is an Anosov diffeomorphism of $\mathbb{T}^2$ with a fixed point $p$ deleted. This is an example where periodic points are dense and the stable and unstable foliations are minimal. 
Our theorem does not apply to this example as the metric is not complete. 
The same problem with trying to ``fix" the metric while keeping the same stable and unstable manifolds also occurs here when we consider the unstable manifolds of the deleted fixed point.

Besides surfaces, there is interest in Anosov diffeomorphisms and flows on other manifolds motivated both by Question \ref{question:main_question} and understanding Anosov dynamics more generally. There are not many papers that explicitly consider this question from the point of view of classification. In \cite{barthelme2025transitive}, a characterization is obtained showing when the lift of $\R$-covered Anosov flow is transitive \cite[Thm.~C]{barthelme2025transitive}.  In particular, this allows them to construct examples of transitive Anosov flows on open manifolds that are lifts of flows on closed manifolds \cite[Thm.~D]{barthelme2025transitive}. In addition, certain kinds of quantitative results have been proved, for example, for counting geodesics in infinite covers under the assumption of transitivity of the dynamics. See \cite{pollicott2017amenable} or  \cite{dougall2021anosov}. 
These results are also related, for example, to certainly physically motivated models. For example, in the study of the $\Z^d$-periodic Lorenz gas, the periodicity of the gas is analogous to the dynamics of a hyperbolic flow on a $\Z^d$-cover, see for example \cite{dolgopyat2022asymptotic}. In view of the particular applications mentioned above, it is quite natural to consider the existence of transitive examples, as \cite{barthelme2025transitive} does. 

The authors are not sure if the techniques here suffice to prove the analogous result for Anosov flows on non-compact $3$-manifolds. 
A typical approach to constructing Markov partitions of flows makes use of the first return map to a section. For this return map it seems likely that the non-compactness can introduce additional complications beyond those that exist for just diffeomorphisms. 
We note, however, that for some non-compact manifolds, Margulis measures may be obtained without effort. If we start with an Anosov flow $f^t$ on a compact manifold $M$, and we lift $f^t$ to an infinite cover of $M$, then the lift will have Margulis measures obtained by lifting those in the base.

\subsection{Outline of the Proof}
Our construction of the Margulis measures uses a countable Markov partition. 
Without any other assumptions, there might be little one could say about the associated countable Markov shift. However, due to the uniform geometry in this setting one is able to construct the Markov partition in such a way that allows one to make use of the fact that the Gurevich entropy of the associated shift space is finite.

In order to construct the Margulis measures, we first construct harmonic functions for the associated one-sided countable Markov shift. These functions are the eigenfunctions of an associated Koopman operator. Here there is a dichotomy in the behavior that the Markov shift can display: it can be either recurrent or transient. This depends on whether the Ruelle zeta function, which counts orbits that return to a compact set, converges or diverges at its radius of convergence.

In subsection \ref{subsec:harmonic_into_measure}, we use this harmonic function to construct a conformal family of measures on the shift space $\widehat{\Sigma}$. This family is a family of measures on symbolic unstable leaves. The construction of the harmonic function and its application  make crucial use of work of Sarig \cite{SarigNR}, Cyr \cite{VanCyrThesis}, and the first author \cite{InvFams}. In each case of the dynamical behavior---either transient dynamics or recurrent dynamics---the harmonic function is given by a different construction adapted to that behavior.

In subsection \ref{GlobCont}, we show how to push the measures just constructed on $\wh{\Sigma}$ to $M$ without sacrificing any of their properties. The most important property that must be retained is local holonomy invariance. While this property is immediate on the symbolic system, obtaining it for local stable holonomies on $M$---which need not respect the elements of the Markov partition---is more involved. Consequently, it becomes imperative that the harmonic function has been constructed correctly to ensure these properties. This challenge is similar to the challenge which Bowen and Marcus face in \cite{BowenMarcus77} when constructing an invariant measure to the horocyclic flow of an Axiom A diffeomorphism on a compact manifold. 

Once this is done, we have managed to construct the appropriate conditional measures on a certain collection of global unstable leaves, which we call proper. These are leaves that never intersect the unstable boundary of any Markov rectangle. A problem that then emerges when one tries to prove that the associated measures are holonomy invariant is that not every point in $M$ is uniquely coded. Consequently, one needs to work harder and study points that are uniquely coded to establish the appropriate result just for such proper leaves. Once this has been done, one finally has constructed the Margulis measures, however these measures may have atoms and also may assign some leaves finite volume.

The application of Hiraide's argument, which uses the Margulis measures we construct, appears in Section \ref{sec:end_of_proof}. As written, Hiraide's argument requires that the Margulis measures have infinite volume on every leaf and that they be non-atomic. Having atoms does not prevent Hiraide's argument from working, but having leaves with finite measure is a non-trivial issue. Surprisingly, it turns out that one can boostrap the missing infinite volume property from a dense subset of the leaves. One can then proceed with Hiraide's argument and use the Margulis measures to construct an explicit universal cover of $M$. Properties of this cover then imply that $\pi_1(M)$ is abelian. As this restriction narrows down what surface $M$ is to finite list, the conclusion that $M$ is compact follows by case analysis.

\noindent\textbf{Acknowledgements.} The authors are grateful to Aaron Brown, Andrey Gogolev, Federico Rodriguez-Hertz, and Amie Wilkinson for helpful discussions. The authors are also grateful to Emilio Corso and Andy Hammerlindl for comments on the manuscript.
The second author was supported by the National Science Foundation under Award No.~DMS-2202967.

\section{Markov Partition for Non-Compact Anosov Diffeomorphisms}

The purpose of this section is to construct a Markov partition for the dynamics of $f$ on the non-compact space $M$, following Bowen's construction as it appears in \cite{BowenRuelle}. The construction is general, and only requires uniformity properties. It does not rely on the fact that $\mathrm{dim}\,M=2$. 

\subsection{Basic Properties of Uniform Anosov Diffeomorphisms}

To begin we fix some notation and constants that will be used throughout the proof.

\begin{enumerate}
    \item 
    Let $f\colon M\to M$ be a uniform Anosov diffeomorphism of a Riemannian manifold $M$. Let $TM=E^u\oplus E^s$ denote the continuous $df$ invariant splitting of $TM$ into unstable and unstable bundles. Namely, there exist constants $C$ and $\lambda>0$ such that for any unit vectors $e^u\in E^u$ and $e^s\in E^s$,
    \[
    \|d_xf^ne^u\|\ge Ce^{\lambda n}\text{ and } \|d_xf^{-n} e^s\|\ge Ce^{\lambda n}.
    \]
    \item
    Let $\alpha>0$ denote the minimum angle between the stable and unstable subspaces, i.e.
    \[
    \alpha\coloneqq \inf_x\angle(E^s_x,E^u_x).
    \]
    Note that this number is positive by assumption.
    \item
    Let $M_f\coloneqq \sup_{x\in M}\{\|d_xf\|,\|d_xf^{-1}\|\}$; note $M_f<\infty$ by Remark \ref{rem:angle_bound}.
    \item
    Let $r_0$ denote the supremum of distance to the boundary of the domain of all uniform charts.
    \item
    We let $B_{\epsilon}(x)$ denote the closed ball of radius $\epsilon$ about the point $x\in M$.
\end{enumerate}

The following definitions are dynamical, standard, and appear in \cite[\textsection~3C]{BowenRuelle}. The proof of the following proposition is identical to that described in \cite[3.3]{bowen2008ergodic}.
\begin{prop}
(Smale Bracket) Suppose that $f$ is a uniform Anosov diffeomorphism. For sufficiently small $\epsilon>0$, there exists $\delta>0$ such for any $x,y$ with $d(x,y)<\delta$,  $W^u_{\epsilon}(x)\cap W^s_{\epsilon}y)$ consists of a single point denoted $[x,y]$. Further the map $[\cdot,\cdot]\colon (x,y)\mapsto [x,y]$ defined on pairs of points $(x,y)$ where $d(x,y)<\delta$ is uniformly continuous. This map is called the \emph{Smale bracket}.
\end{prop}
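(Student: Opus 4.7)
The plan is to reduce the question to a uniform application of the implicit function theorem in uniform charts, using the hypothesis that $f$ is a uniform Anosov diffeomorphism. The reason one expects this to work is that all the quantitative inputs one needs in the compact proof (size of local stable/unstable manifolds, $C^1$ norms of their representations as graphs, angle between them) have uniform lower/upper bounds on all of $M$ by the standing assumptions in Definition \ref{defn:uniform_anosov_diffeo}.

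First I would establish a uniform local stable/unstable manifold theorem: there exists $\epsilon_0>0$ such that for every $x\in M$ and every $\epsilon\in(0,\epsilon_0]$, the local unstable manifold $W^u_\epsilon(x)$ and local stable manifold $W^s_\epsilon(x)$ exist, are embedded, and, when transported into a uniform chart around $x$ via the exponential map (or the chart supplied by the uniform atlas), are graphs of $C^1$ functions over $E^u_x$ and $E^s_x$ respectively, with $C^1$ norm bounded by a constant $L$ independent of $x$. This is a standard graph-transform argument: the hyperbolicity constants $C,\lambda$, the bound $M_f$ on $\|df^{\pm1}\|$, the uniform continuity of the splitting, and the uniform chart bound $B_k$ together furnish a uniform contraction on a uniform space of admissible graphs of slope $\le L$ inside balls of radius $\epsilon_0$. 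The completeness of stable and unstable manifolds in the pullback metric ensures that the $\epsilon$-balls inside $W^{u/s}(x)$ are genuine geometric disks, not truncated by missing ideal endpoints.

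Next I would choose $\delta=\delta(\epsilon)$ so that if $d(x,y)<\delta$ then the entire pair $(W^u_\epsilon(x), W^s_\epsilon(y))$ sits inside a single uniform chart centered at $x$. In this chart, by uniform continuity of the splitting, $E^s_y$ and $E^u_x$ are still nearly at the angle they would make if $y=x$, so by the angle bound $\alpha$ the two tangent subspaces remain uniformly transverse, say at angle at least $\alpha/2$. Since both $W^u_\epsilon(x)$ and $W^s_\epsilon(y)$ are $C^1$ graphs of uniformly bounded slope over these transverse subspaces, the implicit function theorem, with constants depending only on $L$, $\alpha$, and the uniform chart bound, gives a unique intersection point $[x,y]\in W^u_\epsilon(x)\cap W^s_\epsilon(y)$, provided $\delta$ is chosen small compared to $\epsilon$ so that the graphs are defined over overlapping domains. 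This uniform implicit function theorem also produces a modulus of continuity for $[x,y]$ as a function of $(x,y)$ depending only on the geometric constants, which gives uniform continuity of the Smale bracket on the set $\{(x,y):d(x,y)<\delta\}$.

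The main obstacle, and the step that has to be set up carefully, is the uniform local stable/unstable manifold theorem on an open manifold of uniform geometry. Unlike the compact case, one cannot simply invoke a compactness argument to make the constants uniform; instead one has to run the graph transform chart-by-chart and verify that the contraction rate, the admissible slope $L$, and the size $\epsilon_0$ can all be chosen independently of the chart. Once this is in hand, transversality and the implicit function theorem are essentially routine, and the proof goes through as in \cite{bowen2008ergodic}, with the uniform hypotheses of Definition \ref{defn:uniform_anosov_diffeo} playing the role that compactness plays in the classical setting.
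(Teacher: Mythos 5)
Your proposal is correct and follows essentially the same route as the paper, which simply asserts that the proof is identical to Bowen's \cite[3.3]{bowen2008ergodic} and leaves the adaptation implicit. You have correctly identified the one substantive point that must be checked in the non-compact setting---that the graph-transform constants (size of local invariant manifolds, slope bound, transversality angle) can be taken uniform over $M$, with the uniformity hypotheses of Definition \ref{defn:uniform_anosov_diffeo} standing in for the compactness used in Bowen---and the remainder of your argument (fitting the picture into a uniform chart and applying a quantitative implicit function theorem to get existence, uniqueness, and uniform continuity of $[x,y]$) is exactly Bowen's argument carried over.
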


\begin{definition}
A subset $R\subseteq M$ is called a {\em rectangle} if it has a small diameter
relative to $L^{-1}$, $r_0$, and $\delta$, and such that 
 \[[x,y]\in R\text{ whenever }x,y\in R.\]

A rectangle $R$ is called {\em proper} if $R$ is closed and $R=\overline{\mathrm{int}(R)}$. 
\end{definition}

In our main results, we assume that the periodic points of $f$ are dense in $M$. Due to the existence of local product neighborhoods for uniform Anosov diffeomorphisms, it is straightforward to show that this implies that the stable and unstable manifolds of each periodic point are dense in $M$. To see this, consider the closure $C$ of an (un)stable leaf: Any open ball near the boundary of $C$ contains a periodic point whose stable manifold intersects $C$. This leads to a contradiction.

We also use the standard notation for the plaques of the $W^s$ foliation lying in a particular rectangle.
\begin{definition}
Let $R$ be a proper rectangle. For $x\in R$, let
\[W^s(x,R)\coloneqq\text{the connected component of }W^s(x)\cap R\text{ which contains }x,\]
\[W^u(x,R)\coloneqq\text{the connected component of }W^u(x)\cap R\text{ which contains }x.\]
\end{definition}

As rectangles are essentially ``aligned" with the stable and unstable foliations, we introduce notations for their stable and unstable boundaries, $\partial^s R$ and $\partial^u R$.
\begin{lemma}[{\cite[Lemma~3.11]{BowenRuelle}}]
Let $R$ be a closed rectangle. $R$ has boundary 
\[\partial R=\partial^s R\cup \partial^u R,\text{ where }\]
\begin{align*}
\partial^s R=&\{x\in R: x\notin \mathrm{int}(W^u(x,R)) \},\\
\partial^u R=&\{x\in R: x\notin \mathrm{int}(W^s(x,R)) \},
\end{align*}
where the interior of the local leaves refers to their relative interior. 
\end{lemma}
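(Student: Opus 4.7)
The plan is to reduce the statement to the boundary of a topological product by using the local product structure furnished by the Smale bracket. Fix a basepoint $p\in R$ and consider the map
\[
\Phi\colon W^u(p,R)\times W^s(p,R)\to R,\qquad \Phi(u,s)=[u,s].
\]
Because $R$ has diameter small relative to $\delta$ and is a rectangle, the bracket lands in $R$; because of the uniform angle bound and the uniform continuity of $[\cdot,\cdot]$, I would first verify that $\Phi$ is a homeomorphism onto $R$, with inverse $\Phi^{-1}(y)=\bigl([y,p],[p,y]\bigr)$. This uses only that rectangles are closed under bracketing and that the Smale bracket is continuous on pairs of nearby points. Moreover, I would note that $W^u(y,R)=\Phi\bigl(W^u(p,R)\times\{[p,y]\}\bigr)$ and $W^s(y,R)=\Phi\bigl(\{[y,p]\}\times W^s(p,R)\bigr)$, so that $\Phi$ identifies the stable and unstable plaques through $y\in R$ with the obvious horizontal and vertical slices.

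The second step is to upgrade this to a local description of $R$ inside $M$. Because stable and unstable leaves are transverse with angle uniformly bounded below by $\alpha$, the bracket $\Phi$ extends to a homeomorphism of a slightly larger open product $\wt W^u(p)\times \wt W^s(p)$ onto an open neighborhood $U$ of $R$ in $M$, where $\wt W^u(p)$ and $\wt W^s(p)$ denote the local leaves through $p$ of uniform size (cf.\ Bowen's local product neighborhoods). Under this chart, $R$ corresponds to $W^u(p,R)\times W^s(p,R)$, and points of $M$ near $y=\Phi(u,s)$ correspond to points of the form $\Phi(u',s')$ with $(u',s')$ near $(u,s)$ in the product topology.

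From the product chart, $y\in\mathrm{int}(R)$ (interior taken in $M$) if and only if $(u,s)$ is an interior point of $W^u(p,R)\times W^s(p,R)$ in $\wt W^u(p)\times\wt W^s(p)$, i.e.\ iff $u$ lies in the relative interior of $W^u(p,R)$ and $s$ lies in the relative interior of $W^s(p,R)$. Using the identifications noted above, these conditions translate exactly to $y$ lying in the relative interiors of $W^u(y,R)$ and $W^s(y,R)$ respectively. Consequently,
\[
y\in\partial R\iff y\notin\mathrm{int}\,W^u(y,R)\ \text{ or }\ y\notin\mathrm{int}\,W^s(y,R),
\]
which is precisely the claim $\partial R=\partial^s R\cup \partial^u R$.

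I expect the main technical obstacle to be step two: rigorously extending the bracket to a homeomorphism of a product with an open set of $M$, so that boundary computations in $R\subset M$ really do correspond to boundary computations in the product $W^u(p,R)\times W^s(p,R)$. Once one commits to the setting of a uniform Anosov diffeomorphism, where the uniform angle bound, uniform continuity of the splitting, and uniform size of local leaves are in force, this should follow from a standard implicit function/fixed point argument for the bracket, just as in the compact case \cite[\S3]{bowen2008ergodic}; the non-compactness enters only through the need to invoke \emph{uniform} versions of these statements, which are guaranteed by Definition~\ref{defn:uniform_anosov_diffeo}.
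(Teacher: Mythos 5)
The paper does not prove this lemma but cites it directly from Bowen, and your proposal reproduces the standard route from that source: realize $R$ as a product of plaques via the Smale bracket, extend the bracket to a local product chart near $R$, and compute the boundary coordinatewise. Structurally, this is the right argument.

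There is, though, an ordering slip in the bracket that makes your $\Phi$ degenerate as written. With the paper's convention $[x,y]\in W^u_\epsilon(x)\cap W^s_\epsilon(y)$, if $u\in W^u(p,R)$ and $s\in W^s(p,R)$, then $[u,s]\in W^u(u)\cap W^s(s)=W^u(p)\cap W^s(p)=\{p\}$, so $\Phi\equiv p$. You want $\Phi(u,s)=[s,u]$, with inverse $\Phi^{-1}(y)=\bigl([p,y],[y,p]\bigr)$ (you have the two coordinates swapped), and accordingly $W^u(y,R)=\Phi\bigl(W^u(p,R)\times\{[y,p]\}\bigr)$ and $W^s(y,R)=\Phi\bigl(\{[p,y]\}\times W^s(p,R)\bigr)$. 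This is a bookkeeping error rather than a conceptual one, but as stated the map is constant and nothing downstream of it is defined.

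Two smaller points deserve a sentence each. The extension of $\Phi$ to a homeomorphism of an open product onto a neighborhood of $R$, which you rightly flag as the crux, is precisely the content of the local product structure supplied by the Smale bracket proposition in the uniform setting, so it is available here and does not need to be rebuilt from scratch. And the final translation ``$u\in\mathrm{int}\,W^u(p,R)$ if and only if $y\in\mathrm{int}\,W^u(y,R)$'' tacitly uses that the restriction of $\Phi$ to the horizontal slice through $y$ is a homeomorphism onto $W^u(y,R)$; that is true and is exactly where the rectangle closure condition $[x,y]\in R$ for $x,y\in R$ is used, so it is worth stating explicitly rather than leaving implicit.
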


\begin{definition}
A {\em Markov partition} is a (possibly countable) covering $\mathcal{R}$ of $M$ by proper rectangles such that
\begin{enumerate}
	\item The rectangles have disjoint interiors, i.e.~ $\mathrm{int}(R_i)\cap\mathrm{int}(R_j)=\varnothing$ for $i\neq j$,
	\item (Markov property): $f[W^u(x,R_i)]\supseteq W^u(f(x),R_j)$ and $f[W^s(x,R_i)]\subseteq W^s(f(x),R_j)$ when $x
	\in \mathrm{int}(R_i), f(x)\in \mathrm{int}(R_j)$.
\end{enumerate}	
\end{definition}

The significance of the following lemma is in the uniformity of estimates of the Markov partition we construct, in spite of the partition not being finite. In turn, these estimates will allow us to deduce thermodynamic properties of the topological Markov shift induced by the Markov partition.

The following lemma is immediate from the uniform geometry of $M$ and shows that there exist ``uniform" $r$-dense sets of point in $M$. This lemma is where bounded geometry is used: its second conclusion is a type of uniform doubling inequality for the points in the dense set. Note that this lemma does indeed require the bounded geometry: consider a metric space that is a tree with uniformly long edges and unbounded degree.
\begin{lemma}\label{almostFinite}
Suppose that $f\colon M\to M$ is a uniform Anosov diffeomorphism.
	For all sufficiently small $r>0$, there exist $C_r\in \mathbb{N}$ and a subset $\mathcal{S}_r\subseteq M$ such that
	\begin{enumerate}
	\item For all $x\in M$ there exists $y \in\mathcal{S}_r$, such that $d(x,y)<\frac{r}{2}$,
	\item For all $ x\in M$, $\abs{ B_{3rM_f }(x)\cap \mathcal{S}_r}\leq C_r$.
	\end{enumerate}
\end{lemma}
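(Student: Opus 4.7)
The natural candidate for $\mathcal{S}_r$ is a maximal $r/2$-separated subset of $M$, which exists by Zorn's lemma applied to the collection of $r/2$-separated subsets of $M$ ordered by inclusion. Property (1) is then immediate from maximality: if some $x\in M$ satisfied $d(x,y)\ge r/2$ for every $y\in\mathcal{S}_r$, then $\mathcal{S}_r\cup\{x\}$ would remain $r/2$-separated, contradicting maximality.

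For property (2), the plan is a standard volume-packing argument that makes essential use of the uniform geometry of $M$. Restrict attention to $r>0$ small enough that $3rM_f+r/4$ is less than both $r_0$ and the uniform chart/injectivity-radius scale coming from Definition \ref{defn:uniform_anosov_diffeo}. Then for any $x\in M$, the ball $B_{3rM_f+r/4}(x)$ lies inside a single uniform chart, in which the Riemannian metric is bi-Lipschitz equivalent to the Euclidean metric with constants independent of the basepoint $x$; this is exactly the content of bounded geometry, and equivalently of the uniform $C^1$ control on transition maps in Eldering's definition of an order-one uniform manifold. Consequently there exist uniform constants $v_-(r),v_+(r)>0$ such that every Riemannian ball of radius $r/4$ in $M$ has volume at least $v_-(r)$, while every ball of radius $3rM_f+r/4$ has volume at most $v_+(r)$.

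Given these uniform bounds, fix $x\in M$ and write $\{y_1,\dots,y_N\}=\mathcal{S}_r\cap B_{3rM_f}(x)$. The balls $B_{r/4}(y_j)$ are pairwise disjoint by the $r/2$-separation of $\mathcal{S}_r$, each has volume at least $v_-(r)$, and all of them are contained in $B_{3rM_f+r/4}(x)$. Comparing total volumes gives
\[
N\cdot v_-(r)\;\le\;\vol\pez{B_{3rM_f+r/4}(x)}\;\le\;v_+(r),
\]
so setting $C_r\coloneqq \lfloor v_+(r)/v_-(r)\rfloor$ yields the desired bound uniformly in $x$.

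The only step that requires any care is ensuring that the two-sided volume comparison really is uniform across all of $M$, and this is precisely where the bounded/uniform geometry hypothesis is used in a non-trivial way; the unbounded-degree tree example mentioned in the statement confirms that some such hypothesis is indispensable. In the smooth $C^2$ bounded-geometry regime this comparison is classical (for instance via Bishop--Gromov together with two-sided curvature bounds), and in the lower-regularity uniform-manifold-of-order-one regime it follows directly from the uniform $C^1$ bound on transition maps by pulling back to a Euclidean chart, so no serious obstacle arises.
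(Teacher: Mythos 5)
Your proof is correct, and it is the standard argument: a maximal $r/2$-separated set gives property (1) by maximality, and the disjoint-ball volume-packing estimate gives property (2) once bounded geometry (or, in low regularity, the uniform $C^1$ bound on transition maps) supplies uniform two-sided volume bounds for small metric balls. The paper itself states this lemma without proof, calling it ``immediate from the uniform geometry of $M$,'' so your argument is the one the authors evidently have in mind; your remark that the bound fails on an unbounded-degree tree even echoes the authors' own comment preceding the lemma.
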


The proof of the following proposition is the same as in \cite[Proposition~3.6]{bowen2008ergodic}. 

\begin{lemma}
(Shadowing Lemma) Given $\epsilon>0$ sufficiently small there exists $t_\epsilon>0$ such that every $t_\epsilon$-pseudo-orbit is $\epsilon$-shadowed.
\end{lemma}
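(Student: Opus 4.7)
The plan is to follow Bowen's shadowing argument \cite[Proposition~3.6]{bowen2008ergodic} essentially verbatim on small scales, substituting completeness plus uniform hyperbolicity for the compactness step used at the end of the classical proof.

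First I would shrink $\epsilon > 0$ until, uniformly in $x \in M$: local stable and unstable manifolds $W^s_\epsilon(x), W^u_\epsilon(x)$ are well defined with uniform size and Lipschitz geometry; the Smale bracket $[\cdot,\cdot]$ is defined on pairs with $d(x,y) < \delta(\epsilon)$ and is uniformly Lipschitz; and the hyperbolicity of Definition \ref{def:anosov_diffeo} gives contraction by at least $e^{-\lambda/2}$ along $W^s_\epsilon$ under $f$, and along $W^u_\epsilon$ under $f^{-1}$. Each of these is a direct consequence of Definition \ref{defn:uniform_anosov_diffeo}. I would then pick $t_\epsilon > 0$ small relative to $\delta(\epsilon)/M_f$ so that every Smale bracket used below is defined.

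Given a $t_\epsilon$-pseudo-orbit $(x_n)_{n \in \mathbb{Z}}$, I would execute Bowen's finite-time shadowing construction on each window $[-N, N]$. The key object is the backward Smale bracket recursion
\[
y^N_N = x_N, \qquad y^N_n = \bigl[\, x_n,\, f^{-1}(y^N_{n+1})\,\bigr] \qquad (-N \leq n < N),
\]
which produces $y^N_n \in W^u_\epsilon(x_n)$. Since the displacement lies purely in the unstable direction, uniform unstable contraction of $f^{-1}$ together with the per-step pseudo-orbit jump of at most $M_f t_\epsilon$ gives a geometric-series bound $d(y^N_n, x_n) \leq M_f t_\epsilon / (1 - e^{-\lambda/2}) < \epsilon$. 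Coupling this with a dual forward recursion encoding past information as a stable displacement produces a finite-time shadower $z_N \in M$ near $x_0$ whose genuine $f$-orbit $\epsilon$-tracks $(x_n)_{|n| \leq N}$.

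The main step is to take $N \to \infty$. Comparing $z_{N+1}$ with $z_N$, the difference is determined by unraveling the recursions one extra step; the Lipschitz modulus of the bracket contributes a single multiplicative constant while each additional pulled-back step supplies a contraction factor of $e^{-\lambda/2}$. Iterating to depth $N$ yields $d(z_{N+1}, z_N) \lesssim e^{-N\lambda/2}$ (after ensuring the Lipschitz constant is dominated by $e^{\lambda/4}$, by shrinking $\epsilon$ once more if needed). Hence $(z_N)$ is Cauchy; completeness of $M$ produces a limit $z$, and continuity of $f$ transports the finite-time $\epsilon$-shadowing bounds into the limit, giving $d(f^n(z), x_n) < \epsilon$ for all $n \in \mathbb{Z}$. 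The principal obstacle is not the algebraic structure of the argument---which is classical---but the book-keeping required to check that every quantitative estimate (plaque size, bracket modulus, contraction rates, distortion) is uniform in the base point; that is exactly what Definition \ref{defn:uniform_anosov_diffeo} is engineered to provide, and that same uniformity is what allows completeness to replace compactness at the final limit step.
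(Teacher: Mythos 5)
Your proposal is correct and takes essentially the same approach as the paper: the paper simply defers to Bowen's proof in \cite[Proposition~3.6]{bowen2008ergodic}, and your plan correctly fills in what that deference requires in the non-compact setting, namely that the uniformity hypotheses of Definition \ref{defn:uniform_anosov_diffeo} supply base-point-independent plaque sizes, bracket Lipschitz constants, and contraction rates, and that completeness of $M$ replaces compactness in passing the finite-window shadowers to a limit via a Cauchy argument.
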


We now show that there exists a Markov partition for a uniform Anosov diffeomorphism and that the associated graph has bounded degree.

\begin{theorem}\label{ourPtn}
Suppose that $f\colon M\to M$ is a uniform Anosov diffeomorphism.
	For all sufficiently small $r>0$ there exists $D_r>0$ such that $(M,f)$ admits a Markov partition $\mathcal{R}$ satisfying:
	\begin{enumerate}
	\item For all $R\in\mathcal{R}$, $\mathrm{diam}(R)\leq r$,
    \item Any $x\in M$ lies in at most $D_r$ rectangles in $\mc{R}$,
	\item For all $ R\in\mathcal{R}$, $\#\{S\in\mathcal{R}:f[R]\cap S\neq\varnothing\},\#\{S\in\mathcal{R}:f^{-1}[R]\cap S\neq\varnothing\}\leq D_r$.
\end{enumerate}

\end{theorem}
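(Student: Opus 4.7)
The plan is to follow Bowen's construction of Markov partitions from \cite{BowenRuelle}, substituting the uniformly $r$-dense set $\mathcal{S}_r$ of Lemma~\ref{almostFinite} for the finite $\epsilon$-net used in the compact case. We first fix $r>0$ small enough that the Smale bracket, the shadowing lemma, and Lemma~\ref{almostFinite} all apply simultaneously, and choose the pseudo-orbit spacing $t_\epsilon$ small compared to $r$.

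For each $x\in \mathcal{S}_r$ we form the pre-rectangle $T(x)\coloneqq [W^u_\eta(x),W^s_\eta(x)]$ for some $\eta$ proportional to $r$, chosen so that $\mathrm{diam}(T(x))\le r$. Because $\mathcal{S}_r$ is $(r/2)$-dense and shadowing is available, every $y\in M$ admits a coding $(x_n)_{n\in\Z}\subset \mathcal{S}_r$ with $d(f^n(y),x_n)<r/2$ for all $n$, so $y\in T(x_0)$ and the pre-rectangles cover $M$. We then refine $\{T(x)\}$ by the classical Bowen cut: whenever $T(x)\cap T(y)\ne\varnothing$, we slice each along the stable and unstable leaves of the other. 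The output is a family of closed rectangles of diameter $\le r$ with pairwise disjoint interiors; replacing each by the closure of its interior yields the proper rectangles of $\mathcal{R}$. The Markov property is checked exactly as in the compact case using the shadowing of $t_\epsilon$-pseudo-orbits in $\mathcal{S}_r$ together with the $df$-invariance of the stable and unstable leaves.

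The uniformity in items (2) and (3) is where bounded geometry enters decisively. If $T(x)\cap T(y)\neq \varnothing$ then $d(x,y)\le 2r$, so by Lemma~\ref{almostFinite} the number of such $y\in \mathcal{S}_r$ is at most $C_r$. In particular every pre-rectangle is cut by at most $C_r$ others, the refinement is locally finite, and each $T(x)$ contributes only a bounded combinatorial number $N(C_r)$ of refined sub-rectangles; since any $z\in M$ lies in at most $C_r$ pre-rectangles, (2) holds with $D_r\coloneqq C_r\cdot N(C_r)$. For (3), if $f(R)\cap S\ne\varnothing$ with $R\subset T(x)$ and $S\subset T(y)$, then $d(f(x),y)\le r+M_f r+r\le 3rM_f$, placing $y\in B_{3rM_f}(f(x))\cap \mathcal{S}_r$, a set of cardinality at most $C_r$ by Lemma~\ref{almostFinite}; the backward count is symmetric, and enlarging $D_r$ if necessary absorbs the multiplicative factor $N(C_r)$.

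The principal obstacle is that $\mathcal{S}_r$ is infinite, whereas Bowen's refinement in the compact case cuts every pre-rectangle by all others it meets---\emph{a priori} an infinite operation. The content of uniform bounded geometry, distilled into Lemma~\ref{almostFinite}, is precisely that each pre-rectangle meets only boundedly many neighbors, so the refinement remains genuinely local and well-defined. This is what permits the construction to go through in the non-compact setting while simultaneously producing the uniform bounds $D_r$ in (2) and (3) that downstream will yield finite Gurevich entropy for the associated countable Markov shift.
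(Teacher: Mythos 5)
Your proposal is correct and takes essentially the same route as the paper: run Bowen's construction from \cite{bowen2008ergodic} verbatim with the uniformly $r$-dense set $\mathcal{S}_r$ of Lemma~\ref{almostFinite} in place of the finite net, and extract uniform intersection bounds from the doubling-type estimate in item (2) of that lemma. The only stylistic difference is that the paper quantifies the refinement blow-up explicitly as $4^{C_r}$ (so $D_r = C_r\cdot 4^{C_r}$), while you leave it as an abstract combinatorial constant $N(C_r)$; the paper also flags that Bowen's Lemma~3.13 (continuity of the coarse coding map) is the one place a compactness assumption is tacitly used, and observes that local compactness suffices there, whereas you pass over this silently.
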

\begin{proof}

The construction of a Markov partition in \cite[Thm. 3.12]{bowen2008ergodic} applies in our setting. The construction begins by using a finite $r$-dense set in $M$, denoted by $P$ in the proof. In our case, we may use the set  $\mathcal{S}_r$ given by Lemma \ref{almostFinite}. The proof then remains essentially the same: one constructs a covering of $M$ by rectangles and then passes to a refinement of this covering that is the actual Markov partition. The only place where non-compactness might be an issue is Lemma 3.13, which shows the continuity of a particular coarse coding map; in our case local compactness suffices to make the same conclusion.
Hence the proof yields a Markov partition $\mc{R}$ with property (1). 

We now show that due to our choice of $\mathcal{S}_r$ that the partition $\mc{R}$ also has property (2).
The elements of $\mathcal{R}$ are the refinement of a cover of $M$ by proper rectangles where each rectangle corresponds to a point in $\mathcal{S}_r$ which is contained in it. Each $R\in\mathcal{R}$ is contained in a rectangle $R(x)$ for some $x\in\mathcal{S}_r$. Hence for any $S\in\mathcal{R}$ such that $f[R]\cap S\neq\varnothing$ we see that $S$ is contained within $B_{rM_f}(f(x))$. By item (2) in Lemma \ref{almostFinite}, there are at most $C_r$ rectangles corresponding to points of $\mathcal{S}_r$ which intersect $f[R]$. The Bowen refinement of $C_r$ rectangles cannot admit more than $C_r4^{C_r}$-many rectangles. Set $D_r:=C_r\cdot 4^{C_r}$. The proof that $\abs{\{S\in\mathcal{R}:f^{-1}[R]\cap S\neq\varnothing\}}\leq D_r$ is similar. 
\end{proof}

We record some additional basic properties without proof. The proofs are the same as in the compact case as they use only local uniformity of the dynamics.
\begin{prop}
Suppose that $f\colon M\to M$ is a uniform Anosov diffeomorphism. Then $f$ satisfies the Anosov shadowing lemma. Namely, for all $\epsilon>0$ sufficiently small there exists $\delta>0$ such that any $\delta$-pseudo-orbit is $\{x_n\}$ is shadowed by a genuine orbit $\{y_n\}$ and $d(x_n,y_n)<\epsilon$ for all $n\in \Z$. Further, $f$ is expansive.
\end{prop}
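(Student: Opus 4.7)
The plan is to reduce both assertions to the local uniform hyperbolic estimates guaranteed by Definition \ref{defn:uniform_anosov_diffeo}, and then quote the compact case proofs essentially verbatim. The key observation is that the standard arguments (say in \cite{bowen2008ergodic} or Katok--Hasselblatt) only use the hyperbolicity constants $C, \lambda$, the lower bound $\alpha$ on the angle between $E^s$ and $E^u$, the bound $M_f$ on $\|df^{\pm 1}\|$, a uniform size for local stable and unstable manifolds, and uniform continuity of the Smale bracket. All of these are supplied uniformly by our assumptions, so the arguments transfer directly, provided one verifies that no step appeals to global compactness.

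For the shadowing lemma, I would fix $\epsilon>0$ smaller than the size of the local stable and unstable manifolds and the domain of the Smale bracket $[\cdot,\cdot]$, and pick $\delta>0$ depending on $\epsilon$, $\lambda$, and $\alpha$. Given a two-sided $\delta$-pseudo-orbit $\{x_n\}_{n\in\Z}$, I would build shadowing orbits on finite windows and pass to the limit. Concretely, for each $N$ construct a point $y_N$ that $\epsilon$-shadows the segment $x_{-N},\dots,x_N$ by the standard telescoping-bracket procedure: starting from an approximate shadow and successively applying the local product structure,
\[
z_{k+1} = \bigl[f(z_k),\,x_{-N+k+1}\bigr],
\]
correcting along stable leaves and pulling back along unstable leaves. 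The uniform stable contraction and unstable expansion together with $\alpha>0$ make the increments geometric, so the finite-window shadows form a Cauchy sequence. Completeness of $M$ provides the limit point $y$, and the uniform control on the increments yields $d(f^n(y),x_n)<\epsilon$ for every $n\in\Z$.

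For expansivity, I would choose an expansivity constant $\epsilon>0$ smaller than the domain of $[\cdot,\cdot]$ and smaller than the radius in which $W^u_\epsilon(x)\cap W^s_\epsilon(y)$ is a single point whenever $d(x,y)<\epsilon$. Suppose $d(f^n(x),f^n(y))<\epsilon$ for all $n\in\Z$. Form $z=[x,y]\in W^u_\epsilon(x)\cap W^s_\epsilon(y)$. Since $z\in W^s_\epsilon(y)$, the uniform stable contraction gives $d(f^n(y),f^n(z))\to 0$ as $n\to+\infty$, so $d(f^n(x),f^n(z))$ stays within $2\epsilon$ for all $n\ge 0$. Since $z\in W^u_\epsilon(x)$, uniform unstable expansion of nonzero vectors in $E^u$ then forces $z=x$; symmetrically $z=y$, so $x=y$.

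The main technical point, as the paper remarks, is not an obstacle but an auditing step: one must check at each invocation of a compact-case lemma that the input is truly local. The uniform Anosov hypothesis plus bounded/uniform geometry guarantee exactly this, so no new analytic ideas are needed beyond those already used in the construction of the Markov partition.
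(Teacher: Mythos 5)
The paper supplies no proof of this proposition; it simply remarks that the compact-case proofs carry over because they use only local uniformity of the dynamics, and your proposal is precisely that, making explicit which uniform constants ($C,\lambda,\alpha,M_f$, the uniform Smale bracket) are invoked and where completeness of $M$ substitutes for compactness (passing to the limit of finite-window shadows, fixed points in the sequence space). Your filled-in sketch is correct and matches the paper's intended argument.
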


\begin{remark}\label{rem:expansive_partition}
As it will be needed in subsequent proofs, we choose the elements of the Markov partition to be small enough so that if $x\in R$, $y\in S$ and $R\cap S\neq \emptyset$, then $d(x,y)<\epsilon$, where $\epsilon>0$ is a threshold for expansiveness.
\end{remark}

\subsection{Background on Countable Markov Shifts}

In this section we review some definitions used for discussing countable Markov shifts. For more detailed discussion, see \cite{sarig2009lecture}.

Let $\mc{R}$ be a countable collection of symbols. And let $(\mathbf{A}_{ij})_{\mc{R}\times \mc{R}}$ be a (infinite) $\{0,1\}$-matrix. We define the countable Markov shift as the dynamical system on the set
\[
\wh{\Sigma}=\{\omega\in \mc{R}^{\Z}: A_{\omega_n\omega_{n+1}}=1 \text{ for all }n\in \Z\}.
\]
For an element $\underline{R}\in \hat{\Sigma}$ we write $R_i$ for its $i$th entry, $i\in \Z$. 

This space is topologized by the metric
\begin{equation}
d(\underline{R},\underline{S})\coloneqq \exp\left(-\min\{|i|:i\in\mathbb{Z}, R_i\neq S_i\}\right).
\end{equation}
We define the dynamics of the left shift of $\wh{\Sigma}$ by
\[
(\sigma\omega)_n=\omega_{n+1}.
\]

 For a single symbol $R\in \mc{R}$, we define the cylinder set
\[[S]\coloneqq \{\underline{S}\in\widehat{\Sigma}: R_0=S\}.\]
\begin{definition}
A {\em word of length $\ell$} is an $\ell$-tuple $\underline{w}=(w_1,\ldots,w_\ell)\in\mathcal{R}^\ell$, such that  for all $1\leq i\leq\ell-1$, $\sigma([w_i])\cap[w_{i+1}]\neq\varnothing$.
\end{definition}

If we have (possibly infinite) strings $\underline{S}$, $\underline{R}$, we denote their concatenation by juxtaposition $\underline{S}\underline{R}$. If there may be ambiguity about the intended operation, we denote the concatenation by $\underline{S}\cdot\underline{R}$.

\begin{definition}\label{def:transitivity}
	We say that the dynamics on $\widehat{\Sigma}$ is {\em transitive} if for any $R_1,R_2\in S$ there is some $n$ and $\underline{R}\in [R_1]$ such that $\sigma^n\underline{R}\in[R_2]$. Note that this implies the topological transitivity of $(\wh{\Sigma},\sigma)$.
\end{definition}

If one thinks of the matrix $(\mathbf{A}_{ij})$ as the adjacency matrix of a directed graph. Then we say that this matrix is irreducible if it is path connected and is aperiodic if there is some point which has loops based at that point with relatively prime period. The topological markov shift associated to an irreducible aperiodic graph is topologically mixing. In addition, if the graph has locally finite degree, then the associated topological Markov shift is locally compact.

The above definitions extend analogously to one sided shifts. In this paper we consider the shift $(\wh{\Sigma}_L,\sigma_R)$ of all left-infinite words. As well as the left shift on all right infinite words, which we denote $(\wh{\Sigma},\sigma_L)$.
\subsection{Induced topological Markov shift from a Markov Partition}

In this subsection we show that the Markov partition induces a transitive topological Markov shift with a bounded-to-one coding map of $M$. In addition, we show that the Gurevich entropy of the topological Markov shift is finite. 

We begin by introducing the shift space and its associated coding map. 

\begin{definition}\label{defn:coding_space}
(Coding Space) Suppose that $f\colon M\to M$ is a uniform Anosov diffeomorphism and that $\mc{R}$ is a Markov partition of $M$. Let
\[\widehat{\Sigma}\coloneqq\{\underline{R}\in\mathcal{R}^{\mathbb{Z}}\colon \text{ for all } i\in\mathbb{Z}, f[\mathrm{int}(R_i)]\cap \mathrm{int}(R_{i+1})\neq\varnothing\},\]
and define $\widehat{\pi}\colon \widehat{\Sigma}\to M$ to be the unique point in the set
\[\bigcap_{i\in\mathbb{Z}}f^{-i}[R_i].\]
\end{definition}
\noindent The proof that this definition and hence the coding associated to a Markov partition makes sense is the same as the proofs of \cite[Theorem~3.18]{bowen2008ergodic} and \cite[Lemma~4.2]{bowen2008ergodic}.

Later we will make a close study of the points in $M$ such that $\abs{\wh{\pi}^{-1}(\{x\})}=1$. Of particular importance  is the following two sets, $Y$ and $Y'$, which we will reference throughout the rest of the paper.
\begin{definition}\label{defn:Y}
Let $Y$ denote the points of $M$ whose orbit does not meet any unstable boundaries of any of the rectangles, i.e.~  \[Y\coloneqq M\setminus\bigcup_{j\in \mathbb{Z}}f^j[\bigcup_{R\in\mathcal{R}}\partial R.]\] Note that $Y$ is residual and that for $y\in Y$, $\abs{\wh{\pi}^{-1}(y)}=1$.

Let $Y'$ denote the set of points $x$ whose stable leaves never meet a stable boundary of a rectangle and whose unstable leaves never meet an unstable boundary of a rectangle. Note that the set $Y'$ is closed under the Smale bracket.
\end{definition}

One of the main technical issues we face is dealing with leaves that lie in the boundaries of rectangles. For this reason we will focus on the leaves that avoid the orbits of the boundaries as much as possible.

\begin{definition}\label{DefinitionPi}
Given $x\in M$ and $R\in \mathcal{R}$ is a rectangle containing $x$. We say that $W^u(x,R)$ is a {\em standard unstable leaf} if it is disjoint from the orbit of the unstable boundaries of the rectangles in $\mc{R}$, i.e.~
\[
W^u(x,R)\cap\left( \bigcup_{i\in \mathbb{Z}}f^{i}[\bigcup_{R\in\mathcal{R}}\partial^u R]\right) =\varnothing.
\]
\end{definition}

\noindent\textbf{Remark:} Inside each standard unstable leaf $W^u(x,R)$, the meagre set which is the complement to $Y$ is countable. The number of unstable leaves which are not standard is countable.

\begin{prop}
Let $(\wh{\Sigma},\sigma)$ be the coding of a uniform Anosov diffeomorphism $f$. Then $(\wh{\Sigma},\sigma)$ is locally compact.
\end{prop}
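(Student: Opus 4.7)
The plan is to reduce local compactness of the two-sided countable Markov shift $(\widehat{\Sigma},\sigma)$ to the statement that the underlying transition graph is locally finite, and then to read off local finiteness directly from Theorem \ref{ourPtn}.

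First I would recall the standard characterization: a countable Markov shift is locally compact if and only if every symbol has finite in-degree and finite out-degree in the transition graph. The forward direction is because the cylinder $[R]$ decomposes as $\bigsqcup_{S : R\to S} ([R]\cap \sigma^{-1}[S])$, and since each $[R]\cap \sigma^{-1}[S]$ is a clopen neighborhood with nonempty interior, compactness of $[R]$ forces only finitely many $S$ to appear; symmetrically for predecessors. Conversely, if every symbol has finite in- and out-degree, then for each $n$ the set of admissible words of length $2n+1$ centered at a given symbol is finite, and the cylinder on any fixed central symbol is identified with an inverse limit of finite sets in the product topology, hence compact. Every point of $\widehat{\Sigma}$ lies in such a cylinder, so the space is locally compact.

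Next I would identify the graph: by the definition of $\widehat{\Sigma}$, the admissible transitions are exactly the pairs $(R,S)\in\mathcal{R}\times\mathcal{R}$ with $f[\mathrm{int}(R)]\cap \mathrm{int}(S)\neq\varnothing$, which in particular requires $f[R]\cap S\neq\varnothing$. By Theorem \ref{ourPtn}(3), for every $R\in\mathcal{R}$,
\[
\#\{S\in\mathcal{R}: f[R]\cap S\neq\varnothing\}\leq D_r \quad\text{and}\quad \#\{S\in\mathcal{R}: f^{-1}[R]\cap S\neq\varnothing\}\leq D_r.
\]
Therefore each symbol has out-degree and in-degree at most $D_r$, so the graph is uniformly locally finite.

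Combining these two observations finishes the proof: the uniform bound $D_r$ gives finite in- and out-degree at every symbol, and then the characterization above yields local compactness of $(\widehat{\Sigma},\sigma)$. There is essentially no obstacle here; the content of the statement is packaged entirely into Theorem \ref{ourPtn}(3), which is itself a consequence of the bounded geometry assumption via Lemma \ref{almostFinite} together with Bowen's refinement construction. The only mild care needed is in stating the equivalence of local compactness with local finiteness of the graph in the two-sided setting, where one must use both in- and out-degree bounds, both of which are supplied by Theorem \ref{ourPtn}.
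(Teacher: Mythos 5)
Your proof is correct and follows essentially the same route as the paper: both reduce local compactness to bounded in- and out-degree of the transition graph via Theorem \ref{ourPtn}(3). You simply spell out the standard equivalence between local compactness of a countable Markov shift and local finiteness of its graph, which the paper takes as known.
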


\begin{proof}

By Theorem \ref{ourPtn}, we see that there is a uniform number $D_r$ such that if $R$ is a rectangle then $f(R)$ and $f^{-1}(R)$ each intersect at most $D_r$ rectangles in $\mc{R}$. Hence the graph associated with $\wh{\Sigma}$ has in-degree and out-degree bounded by $D_r$. Thus $\wh{\Sigma}$ is locally compact.
\end{proof}

\begin{definition}\label{affiliated}
For $R,S\in\mathcal{R}$, we write $R\sim S$ if $R\cap S\neq\varnothing$.
\end{definition}

Note by Theorem \ref{ourPtn} that for all $R\in\mathcal{R}$, $\abs{\{S\in\mathcal{R}:S\sim R\}}\leq D_r$.

We now record the following Lemma that we will repeatedly use later in the argument.
\begin{lemma}\label{lem:coincide_codings}
Let $(\wh{\Sigma},\sigma)$ and $\wh{\pi}$ be the coding of a uniform Anosov diffeomorphism. Suppose that $x\in M$ and that $\underline{R},\underline{S}$ both code $x$. If there exist $j<k$ such that $R_j=S_j$ and $R_k=S_k$, then $\underline{R}=\underline{S}$.
\end{lemma}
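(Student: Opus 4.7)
The plan is to argue by contradiction via the Smale bracket: assuming $R_i \neq S_i$ for some index $i$, I will produce a two-dimensional open neighborhood of $y_i := f^i(x)$ inside $R_i \cap S_i$, which contradicts $\mathrm{int}(R_i) \cap \mathrm{int}(S_i) = \varnothing$ (from the disjoint-interiors property of the Markov partition).

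First I would handle the indices $j < i < k$. Iterating the forward Markov property along the $\underline{R}$ and $\underline{S}$ codings from the common starting rectangle $R_j = S_j$ yields
\[
f^{\,i-j}\bigl(W^s(y_j, R_j)\bigr) \;\subseteq\; W^s(y_i, R_i) \cap W^s(y_i, S_i),
\]
so the stable plaques of $y_i$ in $R_i$ and in $S_i$ share a common diffeomorphic image of the stable plaque at $y_j$; in particular, this intersection contains a one-dimensional neighborhood of $y_i$ in $W^s(y_i)$. Dually, iterating the backward Markov property from the agreement $R_k = S_k$ gives
\[
f^{-(k-i)}\bigl(W^u(y_k, R_k)\bigr) \;\subseteq\; W^u(y_i, R_i) \cap W^u(y_i, S_i),
\]
a common neighborhood of $y_i$ in $W^u(y_i)$. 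Since rectangles are closed under the Smale bracket, applying $[\cdot,\cdot]$ to points selected from these two one-dimensional neighborhoods populates a two-dimensional open neighborhood of $y_i$ that lies in both $R_i$ and $S_i$, which is the desired contradiction. Hence $R_i = S_i$ for every $j \le i \le k$.

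For the extension to $i > k$ (and symmetrically to $i < j$) the same strategy applies, but more care is required: the forward Markov iteration from $R_j = S_j$ still supplies the stable-plaque neighborhood overlap at $y_i$, while forward propagation of the common plaque $W^u(y_k, R_k) = W^u(y_k, S_k)$ places both $W^u(y_i, R_i)$ and $W^u(y_i, S_i)$ inside the single ambient unstable curve $f^{\,i-k}(W^u(y_k, R_k))$. The stable overlap already forces $y_i \notin \partial^u R_i \cup \partial^u S_i$, so a failure of the unstable overlap to be a neighborhood can only arise if $y_i$ lies on $\partial^s R_i \cap \partial^s S_i$ with the two one-sided unstable plaques emanating from $y_i$ on opposite sides---the configuration of two adjacent rectangles meeting at the shared unstable leaf through $y_i$. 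Ruling out this residual ``opposite-sides'' configuration using the orbit of $x$ together with the coherent agreements $R_j = S_j$ and $R_k = S_k$ is the principal obstacle in the plan; once that configuration is excluded, the Smale bracket again delivers the two-dimensional neighborhood in $R_i \cap S_i$ and the same contradiction closes the argument.
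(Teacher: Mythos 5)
Your proposal takes a genuinely different route from the paper. For the interior indices $j < i < k$, you iterate the Markov property from both endpoints to produce common stable and unstable plaques at $f^i(x)$ in $R_i$ and $S_i$, then apply the Smale bracket to obtain a two-dimensional common neighborhood contradicting the disjoint-interiors property. This is a reasonable local argument, though you should note that the Markov property as stated requires interior points, and $f^l(x)$ will in general lie on rectangle boundaries along the way (indeed, if $R_i \ne S_i$ then $f^i(x)$ \emph{must} be a boundary point); the property does extend to closed rectangles by continuity, but this needs to be said. You should also address the possibility that the propagated plaque images are only one-sided neighborhoods of $f^i(x)$, though the Smale bracket of two one-sided segments still has nonempty interior, so the contradiction survives.

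The paper avoids all of this by never working with the orbit of $x$ at all. It chooses fresh points $y \in \mathrm{int}(R_j) \cap Y'$ and $z \in \mathrm{int}(R_k) \cap Y'$, splices their uniquely determined codings $\underline{y}', \underline{z}'$ onto the two middle blocks $(R_j,\ldots,R_k)$ and $(R_j, S_{j+1},\ldots,S_{k-1}, R_k)$ to form admissible words $\underline{a}, \underline{b} \in \widehat{\Sigma}$, shows $\widehat{\pi}(\underline{a}) = \widehat{\pi}(\underline{b})$ by expansiveness (Remark \ref{rem:expansive_partition}), observes this common image lies in $Y'$ and is therefore uniquely coded, and concludes $\underline{a} = \underline{b}$. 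The splice is exactly what dissolves the boundary case analysis you run into: $\underline{a}$ and $\underline{b}$ share their tails \emph{by construction}, so only the middle blocks are ever compared.

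Your acknowledged gap at the tail indices $i > k$ (and $i < j$) is genuine: forward Markov iteration from $R_k = S_k$ places $W^u(y_i, R_i)$ and $W^u(y_i, S_i)$ inside the same ambient unstable curve $f^{i-k}[W^u(y_k, R_k)]$ but does not force them to overlap in a neighborhood of $y_i$, and the opposite-sides configuration is not ruled out by anything in your plan. I do not see how to close this along your lines. It is worth observing that the paper's argument as written also only directly establishes $R_i = S_i$ for $j \le i \le k$, and the final inference "hence $\underline{R} = \underline{S}$" is not fully spelled out — but that block-agreement is precisely what Proposition \ref{bdd21} needs, whereas your approach does not cleanly deliver even that much without first patching the boundary and one-sided-plaque issues in the middle case.
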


\begin{proof}
Choose a point $y\in \text{int}(R_{j})\cap Y'$ and a point $z\in \text{int}(R_k)\cap Y'$. Let $\underline{y}$ and $\underline{z}$ denote the (unique) codings of $y$ and $z$. Then define new sequences:
\begin{align*}
    \underline{a}&=\underline{y}'\cdot (R_{j},R_{j+1},\ldots,R_{k-1},R_k)\cdot \underline{z}'\\
    \underline{b}&=\underline{y}'\cdot (R_{j},S_{j+1},\ldots,S_{k-1},R_k)\cdot \underline{z}',
\end{align*}
where $\underline{y'}$ and $\underline{z'}$ are truncated in the obvious way so that this makes sense and each of $\underline{a}$ and $\underline{b}$ defines an element of $\wh{\Sigma}$ by the Markov property.  Hence we may consider $a=\wh{\pi}(\underline{a})$ and $b=\wh{\pi}(\underline{b})$. Note by Remark \ref{rem:expansive_partition} that $a$ and $b$ shadow each other closely enough that expansiveness applies and we may conclude that $a=b$.  Note however that $a,b\in W^s(y)$ and $a,b\in W^u(z)$. We claim that $a,b\in Y'$. This is because points are in $Y'$ if their stable manifold avoids the orbit of the stable rectangle boundaries, which is true by the assumption that $y\in Y'$. The same consideration applies for the unstable rectangle boundaries. Thus we see that $a,b$ are uniquely coded as they lie in $Y'$, thus we must have that $\underline{a}=\underline{b}$ and hence $\underline{S}=\underline{R}$.
\end{proof}

\begin{prop}\label{bdd21}
Let $(\wh{\Sigma},\sigma)$ and $\wh{\pi}$ be the coding of a uniform Anosov diffeomorphism. Then there exists $C>0$ such that for all $x\in M$, $\abs{\wh{\pi}^{-1}(\{x\})}\le C$. In fact, 
\[
\abs{\widehat{\pi}^{-1}[\{x\}]}\leq (D_r+1)^2-1,
\]
where $D_r$ is the constant appearing in Theorem \ref{ourPtn}.
\end{prop}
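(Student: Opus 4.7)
My plan is to show that any coding of $x$ is uniquely determined by just two of its entries, at indices $0$ and $1$; combined with the local finiteness provided by Theorem \ref{ourPtn}(2), this yields an immediate quadratic bound.

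Concretely, I would introduce the evaluation map
\[
\Phi \colon \widehat{\pi}^{-1}(\{x\}) \longrightarrow \{R \in \mathcal{R} : x \in R\} \times \{S \in \mathcal{R} : f(x) \in S\}, \qquad \underline{R} \longmapsto (R_0, R_1).
\]
This is well-defined: unwinding the definition $\widehat{\pi}(\underline{R}) = \bigcap_{i \in \mathbb{Z}} f^{-i}[R_i] = \{x\}$ gives $x \in R_0$ and $f(x) \in R_1$.

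The crux of the argument is the injectivity of $\Phi$, which I would deduce directly from Lemma \ref{lem:coincide_codings}: if two codings $\underline{R}, \underline{S} \in \widehat{\pi}^{-1}(\{x\})$ satisfy $R_0 = S_0$ and $R_1 = S_1$, then taking $j = 0$ and $k = 1$ in that lemma forces $\underline{R} = \underline{S}$. Since by Theorem \ref{ourPtn}(2) each of the two factors in the codomain contains at most $D_r$ elements, injectivity yields
\[
|\widehat{\pi}^{-1}(\{x\})| \le D_r \cdot D_r = D_r^2 \le (D_r+1)^2 - 1.
\]

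I do not anticipate any real obstacle to this plan. All of the delicate work---invoking expansiveness of $f$ together with the density of uniquely coded points in $Y'$ to collapse agreement at two positions into full agreement---has already been carried out inside Lemma \ref{lem:coincide_codings}. The present statement is essentially a short counting corollary: the only thing one needs to verify is that evaluation at two consecutive coordinates lands in the indicated product of rectangle-stars, which is immediate from the definition of $\widehat{\pi}$.
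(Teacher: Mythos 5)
Your approach is correct as a deduction from Lemma \ref{lem:coincide_codings} as stated, and it is genuinely different from the paper's. The paper argues by contradiction: assuming $|\widehat{\pi}^{-1}(\{x\})|\ge(D_r+1)^2$, it restricts all codings of $x$ to the window $[-n,n]$ for a single large $n$, applies Theorem \ref{ourPtn}(2) and the pigeonhole principle to the two endpoint symbols $R_{-n}$ and $R_n$, and then invokes Lemma \ref{lem:coincide_codings} with $j=-n$, $k=n$ to force two distinct length-$(2n+1)$ words to coincide. Your version is more direct --- you build an injection $\underline{R}\mapsto(R_0,R_1)$ and bound its codomain --- and it yields the slightly sharper bound $D_r^2$ rather than $(D_r+1)^2-1$.

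There is, however, a point worth flagging about how hard each argument leans on Lemma \ref{lem:coincide_codings}. Your injection needs the lemma's full conclusion, that agreement of two codings at the pair $(j,k)=(0,1)$ forces the entire bi-infinite sequences to coincide. The paper's pigeonhole argument only ever uses the weaker consequence that agreement at $j$ and $k$ forces agreement on the whole block $[j,k]$: the contradiction there is with distinctness of the restricted words on $[-n,n]$, not of the full sequences. Looking at how the lemma is actually proved --- it replaces both tails of $\underline{R}$ and $\underline{S}$ by tails borrowed from uniquely coded points $y,z$, and then identifies the resulting hybrids $\underline{a},\underline{b}$ by expansivity --- the argument only compares the \emph{middle} segments $(R_j,\dots,R_k)$ and $(S_j,\dots,S_k)$; nothing in it constrains $R_i,S_i$ for $i<j$ or $i>k$, so the final step ``hence $\underline{S}=\underline{R}$'' is doing more than the construction supports. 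For the paper's proof this is harmless, since $j=-n$ and $k=n$ sweep out all of $\mathbb{Z}$; for yours, with $j=0$, $k=1$, the block conclusion is vacuous and you genuinely need the bi-infinite statement. You may want to check whether the stronger form is actually available --- for instance by bootstrapping along the orbit or arguing directly that two codings of $x$ agreeing on all of $\{i\ge 0\}$ (or even a single forward ray) must also agree on $\{i<0\}$ --- or else adopt the paper's ``$j=-n$, $k=n$ with $n$ large'' device, which sidesteps the issue.
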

\begin{proof}
Let $x\in M$ and let $P_x'$ denote $\widehat{\pi}^{-1}[\{x\}]$. Suppose for contradiction that $\abs{P_x'}\ge (D_r+1)^2$. Then there exists $n\in \N$ such that 
\[
\abs{\{(R_{-n},\ldots,R_n):\underline{R}\in P_x\}}\ge (D_r+1)^2.
\]
Let $P_n$ denote the set in the previous displayed equation. 
Note that for any $\underline{R},\underline{S}\in P_x$, that $R_n\sim S_n$ and $R_{-n}\sim S_{-n}$ as these code the same point. 
By Theorem \ref{ourPtn}, no point $x\in M$ lies in more than $D_r$ rectangles in the Markov partition, hence by the pigeonhole principle that there exist distinct points $\underline{R},\underline{S}\in P_n$ such that $R_{-n}=S_{-n}$ and $R_n=S_n$. 
 But by Lemma \ref{lem:coincide_codings}, this implies that $\underline{R}=\underline{S}$; a contradiction.
\end{proof}

The following theorem summarizes the main properties of the coding in Definition \ref{defn:coding_space}. We say that a map $\phi$ is uniformly locally H\"older continuous when there is are fixed $C,\alpha>0$ such that the restriction of $f$ to sets of uniform diameter $\delta>0$ is $\alpha$-H\"older with H\"older constant $C$.

\begin{theorem}\label{injectiveness}
Suppose that $f\colon M\to M$ is a uniform Anosov diffeomorphism and that $\mc{R}$ is a Markov partition of $M$. Let $(\wh{\Sigma},\sigma)$ and $\hat{\pi}$ be the associated coding space. Then $\wh{\pi}$ is well-defined, surjective, uniformly finite-to-one, and is uniformly locally H\"older continuous with respect to the metric $d$ on $\wh{\Sigma}$. In addition, $\widehat{\pi}\circ \sigma=f\circ\widehat{\pi}$ where $\sigma\colon\widehat{\Sigma}\to\widehat{\Sigma}$ is the left-shift.
\end{theorem}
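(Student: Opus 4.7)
The plan is to mirror the proof of the analogous statement in the compact setting \cite[Thm~3.18]{bowen2008ergodic}, replacing global compactness of $M$ by compactness of individual proper rectangles together with the uniform local finiteness from Theorem \ref{ourPtn}, and substituting the \emph{uniform} Anosov hypothesis of Definition \ref{defn:uniform_anosov_diffeo} for the bulk hyperbolic estimates used in the compact case. To establish well-definedness, fix $\underline{R}\in\widehat{\Sigma}$ and consider the nested closed sets $K_n\coloneqq\bigcap_{|i|\leq n}f^{-i}[R_i]$; each $K_n$ is nonempty by a standard Markov-property induction, and compact because each proper rectangle is closed and bounded in a complete manifold of bounded geometry. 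Hence $\bigcap_n K_n\neq\varnothing$. Uniqueness of the intersection follows from expansivity together with Remark \ref{rem:expansive_partition}: any two points of $\bigcap_n K_n$ have orbits remaining in the common chain of rectangles $(R_i)$ and therefore within the expansivity threshold, forcing them to agree.

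For surjectivity, given $x\in M$, for each $i$ choose a rectangle $R_i\ni f^i(x)$; such a choice exists since $\bigcup\mathcal{R}=M$, and the Markov compatibility $f[\mathrm{int}(R_i)]\cap\mathrm{int}(R_{i+1})\neq\varnothing$ can be enforced by approximating $x$ with nearby points lying in the interior of a single rectangle and extracting a limit coding via local compactness and the bounded valence $D_r$. For uniform finite-to-oneness, any coding of $x$ satisfies $R_i\ni f^i(x)$, and Theorem \ref{ourPtn}(2) bounds the number of admissible $R_i$ at each index by $D_r$. Combining Markov compatibility with expansivity, the choices at indices far from $0$ are determined by the choices in a bounded window around $0$, yielding a uniform upper bound on the number of codings of any $x\in M$ depending only on $D_r$ and the expansivity constant.

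If $d(\underline{R},\underline{S})=e^{-n}$, then $R_i=S_i$ for $|i|\leq n$, so both $\widehat{\pi}(\underline{R})$ and $\widehat{\pi}(\underline{S})$ lie in $K_n$. Uniform hyperbolicity---forward contraction along $E^s$, backward contraction along $E^u$, and the uniform angle bound from Definition \ref{defn:uniform_anosov_diffeo}---yields $\mathrm{diam}(K_n)\leq C e^{-\beta n}$ for uniform constants $C,\beta>0$, which establishes uniform local H\"older continuity of $\widehat{\pi}$. Equivariance $\widehat{\pi}\circ\sigma=f\circ\widehat{\pi}$ is immediate from the defining intersection, since shifting the coding by one translates the intersection under $f$.

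The main subtlety I expect is the uniform finite-to-one bound. In the compact case this is automatic from the finiteness of $\mathcal{R}$, but in our setting one must argue that the pointwise valence $D_r$ does not accumulate across distant indices: Markov compatibility combined with expansivity must be used to show that choices at indices $|i|\gg 1$ are forced by the data in a bounded window around $0$, and that the resulting bound on $|\widehat{\pi}^{-1}(x)|$ is genuinely uniform in $x$ rather than growing with any auxiliary parameter.
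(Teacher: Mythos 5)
Your overall template --- running Bowen's compact argument with bounded geometry and local finiteness of $\mathcal{R}$ in place of global compactness --- is indeed what the paper does, and for well-definedness, H\"older continuity, and equivariance your sketch is fine; the paper simply cites the compact case for these. Your surjectivity sketch is also close in spirit, though the paper is more careful: it approximates $x$ by points in the residual set $Y$ of points whose entire orbit avoids all rectangle boundaries (so their codings exist \emph{and are unique}), whereas ``approximating $x$ with nearby points lying in the interior of a single rectangle'' does not by itself yield a well-defined coding to pass to the limit, since later iterates of such a point may still hit boundaries.

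The genuine gap is in the uniform finite-to-one bound, and you flag it yourself but then wave at it rather than proving it. The assertion that ``Markov compatibility combined with expansivity'' shows ``choices at indices $|i|\gg 1$ are forced by the data in a bounded window around $0$'' is not correct as stated and is not how the argument goes. What is actually needed is a rigidity statement --- the paper's Lemma~\ref{lem:coincide_codings}: if $\underline{R}$ and $\underline{S}$ both code the same $x$ and agree at two indices $j<k$, then $\underline{R}=\underline{S}$ everywhere. Expansivity alone does not give this; the proof constructs two auxiliary chains by splicing in codings of nearby points of $Y'$, uses expansivity to show those auxiliary chains code the same point, and then uses unique coding of points of $Y'$ to conclude the splices coincide. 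Only with this rigidity in hand does one get the bound $\lvert\widehat{\pi}^{-1}(x)\rvert\le (D_r+1)^2-1$ (Proposition~\ref{bdd21}): if there were $(D_r+1)^2$ distinct codings, then by Theorem~\ref{ourPtn}(2) at any single index there are at most $D_r$ admissible symbols, so pigeonhole on the pair of coordinates $(-n,n)$ forces two distinct codings to agree at both $-n$ and $n$, contradicting the rigidity lemma. Your sketch is missing both the rigidity lemma and the pigeonhole step that actually produces the uniform bound; as written the finite-to-one claim is asserted, not proved.
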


\begin{proof}
That the map is well-defined and H\"older continuous is the same as in the compact case, so we omit this. That $\wh{\pi}$ is uniformly finite to one was checked in Proposition \ref{bdd21}.

We just check surjectivity.	Let $x\in M$; we show $x\in \widehat{\Sigma}$. Let $Y$ be as in Definition \ref{defn:Y}. As $Y$ is dense, there is a sequence $x_n\in Y$ such that $x_n$ converges to $x$.
	Let $\underline{R}^{(n)}$ be the preimage of $x_n$ under $\hat{\pi}$. Then as the Markov partition is locally finite, we may assume that $R_0^{(n)}=R$ for some fixed $R\in \mc{R}$. Thus by local compactness of $\widehat{\Sigma}$, we may assume that $\underline{R}^{(n)}\to \underline{R}$ for some $\underline{R}\in \widehat{\Sigma}$. Thus by continuity of $\hat{\pi}$, $\hat{\pi}(\underline{R})=x$. 
\end{proof}

\subsection{Gurevich Entropy is finite}

We now begin the proof that $(\widehat{\Sigma},\sigma)$ has finite Gurevich entropy. The Gurevich entropy is a generalization of topological entropy to the non-compact setting, which measures the growth rate of the number of periodic orbits of length $n$ that intersect a small neighborhood. In the classical Anosov case, one can similarly define entropy by studying returns to a small open set. One can check its equivalence to the usual definition using Bowen balls by using the shadowing property.
In the case of a countable Markov shift, as long as the dynamics are transitive, the Gurevich entropy is independent of the neighborhood used. (For a detailed discussion see \cite[\textsection3.1.3]{sarig2009lecture}).
The reason that the Gurevich entropy is finite in this case is essentially that the graph of the associated topological Markov shift has uniformly bounded degree. 

\begin{definition}\label{GurEnt}
Suppose that $(\wh{\Sigma},\sigma)$ is a transitive Markov shift on an alphabet $\mc{R}$.
Fix some $R\in \mc{R}$.
The {\em Gurevich entropy} of $\widehat{\Sigma}$ is \[h_G(\sigma)\coloneqq\limsup_{n\to\infty}\frac{1}{n}\log\#\{\underline{w}\text{ word of length }n: w_0=w_{n-1}=R\}\in[0,\infty].\]
\end{definition}

To apply this definition we must first show the following proposition.

\begin{prop}
Suppose that $f\colon M\to M$ is a uniform Anosov diffeomorphism, then the associated topological Markov shift $(\widehat{\Sigma},\sigma)$ is transitive. 
\end{prop}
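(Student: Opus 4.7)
The plan is to bootstrap transitivity of $(\widehat{\Sigma},\sigma)$ from topological transitivity of $f$ on $M$, using that the residual set $Y$ of uniquely coded points is dense in every nonempty open set.

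First I would establish topological transitivity of $f$ on $M$: for any two nonempty open sets $U,V \subseteq M$, some forward iterate $f^n(U)$ meets $V$. Using the standing hypothesis that periodic points are dense, pick a periodic point $p \in V$ of period $k$. As noted in the paragraph following the definition of a Markov partition, the stable manifold $W^s(p)$ is dense in $M$, so there exists $z \in W^s(p) \cap U$. The exponential contraction on $E^s$ from Definition \ref{def:anosov_diffeo} gives $f^{mk}(z) \to p$, so $f^{mk}(z) \in V$ for all sufficiently large $m$, whence $f^{mk}(U) \cap V \neq \varnothing$.

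Apply this with $U = \mathrm{int}(R_1)$ and $V = \mathrm{int}(R_2)$, which are nonempty since the elements of $\mathcal{R}$ are proper rectangles. This produces $n > 0$ for which $\mathrm{int}(R_1) \cap f^{-n}(\mathrm{int}(R_2))$ is a nonempty open set. Because $Y$ is residual in $M$, hence dense, I pick $x \in Y \cap \mathrm{int}(R_1) \cap f^{-n}(\mathrm{int}(R_2))$. The coding $\underline{R} = \widehat{\pi}^{-1}(x)$ is then unique, and since the entire orbit of $x$ avoids the boundaries of the rectangles, each $f^i(x)$ lies in the interior of exactly one rectangle (by disjointness of interiors in the Markov partition), and that rectangle must be $R_i$. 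In particular $R_0 = R_1$ and $R_n = R_2$, so $\underline{R} \in [R_1]$ and $\sigma^n \underline{R} \in [R_2]$, which is precisely the transitivity condition of Definition \ref{def:transitivity}.

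The only place one needs to be careful is invoking the density of stable manifolds of periodic points; this is stated in the excerpt as a consequence of density of periodic points and the existence of local product neighborhoods, and a standard argument using the Smale bracket confirms it. Everything else is a direct translation between the dynamics on $M$ and the symbolic picture, so I do not anticipate any substantial obstacle here: the real work in this paper lies in analyzing the coarser structure of $(\widehat{\Sigma},\sigma)$, not in its transitivity.
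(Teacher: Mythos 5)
Your proof is correct and takes essentially the same approach as the paper: both exploit density of periodic points and density of their invariant manifolds to produce an orbit traveling from $\mathrm{int}(R_1)$ to $\mathrm{int}(R_2)$, and then read off a valid word from a coding of that orbit. The paper works with the unstable manifold of a periodic point placed in $\mathrm{int}(R_1)$ and iterates backward rather than the stable manifold and forward iteration, and it omits the appeal to $Y$ (which is not needed here, since any coding of a point lying in $\mathrm{int}(R_1)$ whose $n$th iterate lies in $\mathrm{int}(R_2)$ automatically has those two symbols, because disjointness of interiors together with $R=\overline{\mathrm{int}\,R}$ forces a point in $\mathrm{int}(R)$ to belong to no other rectangle), but these are cosmetic differences.
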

\begin{proof}
We check the condition in Definition \ref{def:transitivity}. Hence it suffices to show that for fixed rectangles $R$ and $S$ that there exists a point whose orbit goes from the interior of $R$ to the interior of $S$ forward in time. Let $p\in \mathrm{int}(R)$ be a periodic point. Then as the unstable leaf of $p$ is dense in $M$, there exists $x\in \mathrm{int}(S)\cap W^u(p)$. Hence there exists $n\ge 0$ such that $f^{-n}(x)\in \mathrm{int}(R)$. The conclusion follows by considering the coding of $x$.
\end{proof}

\begin{cor}
The Gurevich entropy of $\widehat{\Sigma}$ is finite. 	
\end{cor}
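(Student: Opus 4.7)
The plan is to bound the Gurevich entropy directly using the uniform bound on the out-degree of the transition graph supplied by Theorem \ref{ourPtn}. Fix any rectangle $R \in \mc{R}$; by Definition \ref{GurEnt} we need to estimate
\[
N_n(R) \coloneqq \#\{\underline{w} \text{ word of length } n : w_1 = w_n = R\}.
\]

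First, I would recall that by Theorem \ref{ourPtn}(3), for every $S \in \mc{R}$ there are at most $D_r$ rectangles $T$ with $f[S] \cap T \neq \varnothing$. Equivalently, every vertex of the directed graph underlying $\widehat{\Sigma}$ has out-degree at most $D_r$. Hence, starting from any fixed symbol, the number of admissible length-$n$ words (that is, words $\underline{w}$ with $\sigma([w_i]) \cap [w_{i+1}] \neq \varnothing$ for each consecutive pair) is bounded by $D_r^{n-1}$, since at each of the $n-1$ successive positions there are at most $D_r$ choices for the next symbol.

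Second, I would observe that this estimate automatically bounds $N_n(R)$, because the set of length-$n$ words starting and ending at $R$ is a subset of the set of length-$n$ words starting at $R$. Therefore $N_n(R) \le D_r^{n-1}$, and so
\[
h_G(\sigma) = \limsup_{n \to \infty} \frac{1}{n} \log N_n(R) \le \limsup_{n \to \infty} \frac{n-1}{n} \log D_r = \log D_r < \infty.
\]

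There is no real obstacle here: the entire content of the corollary is that the bounded-degree property of the Markov partition from Theorem \ref{ourPtn}, which is where the bounded geometry assumption is used essentially, translates into a uniform bound on the exponential growth rate of admissible words. Transitivity was verified in the preceding proposition, so the Gurevich entropy is well-defined as a limit independent of the choice of the base symbol $R$, and the bound $\log D_r$ holds uniformly.
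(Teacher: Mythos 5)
Your proof is correct and is essentially identical to the paper's: both appeal to the uniform out-degree bound $D_r$ from Theorem \ref{ourPtn}(3) to bound the number of admissible length-$n$ words starting (and ending) at a fixed $R$ by a power of $D_r$, immediately giving $h_G(\sigma)\le \log D_r<\infty$. Your version just spells out the counting step ($D_r^{n-1}$ rather than the paper's slightly cruder $D_r^n$), which makes no difference to the limit.
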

\begin{proof}
Fix $R\in\mathcal{R}$. By Theorem \ref{ourPtn}, for all $n\geq0$, 
\[
\abs{\{\underline{w}\text{ word of length }n: w_1=w_n=R\}}\leq D_r^n,
\]
whence $h_G(\sigma)\leq \log D_r<\infty$.
\end{proof}

\section{Margulis Measures through Countable Markov Shifts}\label{sec:margulis_measures}

For an Anosov diffeomorphism of a closed manifold, the measure of maximal entropy is unique and has a number of special properties. Some of the most important properties are that its conditional measures along unstable manifolds are conformally invariant under the dynamics and holonomy invariant by stable holonomies. See, for example, the discussion in \cite{katok1997introduction}. Whereas the conditional of a measure along a foliation is typically not well-defined at every point, the MME gives a system of measures that are well defined along leaves and under local holonomies are invariant.

Because there is no reason why a finite measure MME should exist for a non-compact system, our goal is to produce a system of measures along unstable leaves that have the same properties that the conditional measures of the MME have in the compact case. We will refer to this system of measures as \emph{Margulis measures} to emphasize these properties they share with the MME in the finite dimensional setting.

In this section we construct a family of Margulis measures, i.e.~a family of leaf measures on unstable leaves, which remain invariant up to a fixed multiplicative constant when pushed by the dynamics and which are invariant under holonomies. We wish to construct these measures on $M$ without assuming the existence of a measure of maximal entropy (nor the compactness of $M$). We prove that the Margulis measures are fully supported on unstable leaves, are finite on local unstable leaves, and are infinite on global unstable leaves of periodic points. 

The construction of the Margulis measures in this section (Definition \ref{theFamMu}) is general and also applies to codings of non-uniformly hyperbolic systems, such as those studied in \cite{Sarig,SBO,LifeOfPi}.

\subsection{Coding and Associated One Sided Shifts}
We will fix for the rest of the section a Markov partition $\mc{R}$ and associated coding $\hat{\pi}\colon \widehat{\Sigma}\to M$ as in Theorem \ref{injectiveness}. In our construction, we will also work with two kinds of local-unstable sets, stable sets in $\widehat{\Sigma}$ as well as local unstable manifolds in $M$. The dynamics on the space of local unstable sets in $\widehat{\Sigma}$ is given by the dynamics on an associated $1$-sided shift, which we now describe.

\begin{definition}
(One sided shifts) Define
\[\widehat{\Sigma}_L\coloneqq \{(R_i)_{i\leq 0}:(R_i)_{i\in\mathbb{Z}}\in \widehat{\Sigma}\},\]
The space $\widehat{\Sigma}_L$ is endowed with dynamics of the {\em right-shift} $\sigma_R:\widehat{\Sigma}_L\to \widehat{\Sigma}_L$ defined by
\[ (\sigma_R(\underline{R})_i)_{i\leq 0}=(R_{i-1})_{i\leq 0}.\]

\noindent Define
\[\widehat{\Sigma}_R\coloneqq \{(R_i)_{i\ge 0}:(R_i)_{i\in\mathbb{Z}}\in \widehat{\Sigma}\},\]
The space $\widehat{\Sigma}_R$ is endowed with dynamics of the {\em left-shift} $\sigma_L$ 

\end{definition}
\noindent Conceptually, the right shift on $\wh{\Sigma}_L$ is like applying $f^{-1}$ on the set of local unstable leaves. 
We will also use the following maps, which relate $\widehat{\Sigma}$ and $\widehat{\Sigma}_L$. 
\begin{definition}\label{defn:local_unstable_sets}
Let $\tau\colon\widehat{\Sigma}\to \widehat{\Sigma}_L$ be the map $\tau((R_i)_{i\in\mathbb{Z}})= (R_i)_{i\leq0}$. Note that for a word $\underline{R}\in \widehat{\Sigma}_L$, that $\tau^{-1}[\{\underline{R}\}]$ is a (symbolic) local unstable set in $\widehat{\Sigma}$. Hence we will also denote this set by $W^u_0(\underline{R})$.  
\end{definition}

Next we will have the symbolic stable holonomies between local unstable sets.

\begin{definition}
Given a cylinder set $[R]\subset \wh{\Sigma}$, for any two points $\underline{R},\underline{S}\in[R]\subset \widehat{\Sigma}_L$, we define the local symbolic stable {\em holonomy map}  $\widehat{\Gamma}_{\underline{R}\underline{S}}\colon W^u_0(\underline{R})\to W^u_0(\underline{S})$ by $$
\widehat{\Gamma}_{\underline{R}\underline{S}}((\ldots,R_{-1},R_0)\cdot(x_0,x_1,\ldots))\coloneqq (\ldots,S_{-1},R_0)\cdot(x_0,x_1,\ldots) .
$$
\end{definition}

Let $C(\wh{\Sigma})$ denote the space of continuous real valued functions on $\wh{\Sigma}$.

\begin{definition}
The {\em Ruelle operator} $L_0\colon C(\widehat{\Sigma}_L)\to C(\widehat{\Sigma}_L) $ is defined by $$(L_0 h)(\underline{R})=\sum_{\sigma_R\underline{S}=\underline{R}}h(\underline{S}).$$
\end{definition}

As in the compact case, one finds constructs a Margulis measure by first finding an eigenvector of the operator $L_0$.

\begin{definition}\label{harmonic}
A function $\psi\colon \widehat{\Sigma}_L\to\mathbb{R}^+$ is called {\em harmonic} if
\begin{enumerate}
\item 
$\log\psi$ is uniformly continuous,
\item $L_0\psi=e^{h_G(\sigma)}\psi$.
\end{enumerate}
\end{definition}

\subsection{A Harmonic Function on the topological Markov shift}\label{SigHarSubsec}

In the classical approach to the construction of the MME for an Anosov diffeomorphism of a compact manifold, one first constructs a harmonic function of a particular Ruelle operator by exhibiting that the operator has spectral gap. Similarly, we will construct the Margulis measures by using a harmonic function for the Ruelle operator. However, in the non-compact case when the topological Markov shift is transient the argument from the compact case has a number of complications. \emph{A priori} there is no obvious reason why any harmonic functions should exist. For example, in the non-compact case the Ruelle operator  may not have a spectral gap nor need it be exponentially mixing. Hence to obtain a harmonic function more delicate work is required. Fortunately, work of Cyr \cite{VanCyrThesis} and Shwartz \cite{schwartz2019thermodynamic} shows that such harmonic functions do exist, and in the transient case there may be many of them. There is then another potential issue: the structure of the harmonic function might have little relation to the structure of the topological Markov shift and hence the associated dynamics on the coded space. This might cause problems when we attempt to turn information about holonomy invariance in the shift space into holonomy invariance on the manifold $M$. Fortunately, the way that Cyr constructs a harmonic function by counting intersections of stable sets in the topological Markov shift has a suitable dynamical structure. Later, in Claim \ref{claim:cylinder_measure}, we will use this specific form of the harmonic functions to verify the holonomy invariance of the Margulis measures by relating these intersections in $\widehat{\Sigma}$ to the intersections of a curve in $M$ with a long stable manifold.

Let us now contrast this with a more classical approach to the construction of Margulis measures such as Hiraide considers \cite{hiraide2001simple}. To construct the Margulis measures along a piece of unstable manifold $W^u_{loc}(x)$, Hiraide takes a piece of stable manifold $W^s_L(y)$ of length $L$ and places a $\delta$-mass at each intersection point of $W^s_{L}(y)$ with $W^u_{loc}(x)$. Then Hiraide normalizes and takes a weak* limit. Broadly, this is similar to the construction of Cyr. However, in the non-compact setting, unless one chooses the leaf $W^s_L(y)$ carefully, problems may arise. For example, in our setting it is possible that $W^s(y)$ might intersect $W^u_{loc}(x)$ only finitely many times, which would result in a fully supported measure.

We now proceed with the construction of the harmonic functions. We will use the same cylinder set notation for the elementary cylinders in $\wh{\Sigma}_L$.
\begin{definition}
Given $R\in\mathcal{R}$, we write $[R]\coloneqq \{\underline{R}\in\widehat{\Sigma}_L:R_0=R\}$ when it is clear from context that $[R]$ is a subset of $\widehat{\Sigma}_L$. The collection $\{[R]\}_{R\in\mathcal{R}}$ is called the set of {\em partition sets}.
\end{definition}

\begin{definition}\label{rec}
We say that $\widehat{\Sigma}_L$ is {\em recurrent} if for some $R\in\mathcal{R}$, 
\begin{equation}\label{eqn:recurrence}
\sum_{n\geq 0}e^{-nh_G(\sigma)} \#\{\underline{w}\text{ word of length }n: w_0=w_{n-1}=R\}=\infty,
\end{equation}
and otherwise we say that it is {\em transient}. Definition \ref{rec} is well-defined due to the transitivity of the dynamics on $\widehat{\Sigma}$, hence it is equivalent to say that $\hat{\Sigma}$ is recurrent if \eqref{eqn:recurrence} holds for all $R\in \mc{R}$. 

\end{definition}

Before we state the next theorem we introduce additional notation. For an element $R\in \mc{R}$ by $R\to S$ we mean the set of all $S\in \mc{R}$ that are accessible from $R$ in a single step. We denote by $R\xrightarrow[]{p}S$ the set of all $S\in \mc{R}$ that are accessible from $R$ in exactly $p$ steps. Below the $\pm$ decoration on $\underline{R}^{\pm}$ is meant to emphasize that $\underline{R}^{\pm}$ is a two-sided word.

We will use both local stable and local unstable sets in $\hat{\Sigma}$ and count intersections between them. We already introduced the symbolic local unstable sets in Definition \ref{defn:local_unstable_sets}, so we now introduce the local stable sets.  For $n\ge 0$, and $\underline{a}\in \widehat{\Sigma}$, we have local stable sets of different diameters. Set 
\[
W^s_n(\underline{a})\coloneqq\{\underline{R}^\pm\in \widehat{\Sigma}:(R^\pm_{k+n})_{k\geq0 }=(a_{k+n})_{k\geq0} \}=\sigma^{-n}(W^s_0(\underline{a})).
\]
Notice, for every $n\geq 1$, that $\abs{ W^u_0(\underline{R})\cap W^s_n(\underline{a})}$ is finite and independent of $\underline{R}\in[R_0]\subset \widehat{\Sigma}_L$. Furthermore, the set  $\bigcup_{n\geq 0} W^u_0(\underline{R})\cap W^s_n(\underline {a})$ is dense in $W^u_0(\underline{R})$. 

Set 
\[
Z_n(R,\underline{a})\coloneqq \abs{W^u_0(\underline{R})\cap W^s_{n}(\underline{a})}.
\]
for $\underline{R}\in[R]$.

We also define the notation for $R,S\in \mc{R}$. We let $Z'_n(R,S)$ denote the number of valid words of length $n$ of the form $(R,\ldots,S)$.

\begin{theorem}[Cyr, Sarig]\label{SigHar}
Suppose that $(\wh{\Sigma},\sigma)$ is a locally finite, transitive, topological Markov shift with finite Gurevich entropy. 
There exists a positive function $\psi\colon \mc{R}\to \R$ such that for all $R\in \mc{R}$
\[\psi(R)=e^{-
h_G(\sigma)}\sum_{R\to S}\psi(S)
.\]
The function defined by $\underline{R}\mapsto \psi(R_0)$ then defines a harmonic function on $\wh{\Sigma}_L$ that we also denote by $\psi$.
\end{theorem}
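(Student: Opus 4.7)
The plan is to obtain $\psi$ as a limit of Perron--Frobenius eigenvectors associated to finite strongly connected subgraphs that exhaust the transition graph of $(\wh\Sigma,\sigma)$. Fix a base state $a\in \mc{R}$ and put $\Lambda:=e^{h_G(\sigma)}$. By transitivity, each $R\in\mc{R}$ lies on a cycle through $a$, so one can build an increasing exhaustion $F_1\subset F_2\subset\cdots$ of $\mc{R}$ by finite subsets such that each $F_k$ contains $a$, the induced subgraph on $F_k$ is strongly connected, and $\bigcup_k F_k=\mc{R}$. Let $A_k$ denote the restriction of the transition matrix to $F_k$. By the Perron--Frobenius theorem for irreducible nonnegative matrices, $A_k$ admits a strictly positive right eigenvector $\psi_k:F_k\to\R_+$ whose eigenvalue $\lambda_k>0$ equals the spectral radius of $A_k$; normalize by $\psi_k(a)=1$. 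Then
\[
\sum_{\substack{S\in F_k\\ R\to S}}\psi_k(S)=\lambda_k\,\psi_k(R),\qquad R\in F_k.
\]
Monotonicity of the spectral radius in $k$, combined with the loop-counting definition of $h_G$, gives $\lambda_k\nearrow\Lambda$.

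The main step is to bound $\psi_k(R)$ uniformly in $k$ for each fixed $R\in\mc{R}$. Choose integers $m^+(R),m^-(R)$ so that there is an admissible path of length $m^+(R)$ from $a$ to $R$ and one of length $m^-(R)$ from $R$ to $a$; both sit inside every $F_k$ with $k$ large. Iterating the eigenvector equation and restricting the resulting sum over length-$m^\pm$ paths to the single chosen path yields
\[
\lambda_k^{m^+(R)}=\lambda_k^{m^+(R)}\psi_k(a)\ \ge\ \psi_k(R),\qquad \lambda_k^{m^-(R)}\psi_k(R)\ \ge\ \psi_k(a)=1.
\]
Since $0<\lambda_k\le\Lambda$, this produces the $k$-uniform two-sided bound $\Lambda^{-m^-(R)}\le\psi_k(R)\le\Lambda^{m^+(R)}$.

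Because $\mc{R}$ is countable, a Cantor diagonal extraction yields a subsequence $k_j\to\infty$ along which $\psi_{k_j}(R)\to\psi(R)$ for every $R\in\mc{R}$, with $\Lambda^{-m^-(R)}\le\psi(R)\le\Lambda^{m^+(R)}$; in particular $\psi>0$. Local finiteness of the graph makes $\{S:R\to S\}$ a finite set, so for $k$ large every term of the eigenvector equation is genuinely present, and passing to the limit gives
\[
\sum_{R\to S}\psi(S)=\Lambda\,\psi(R)=e^{h_G(\sigma)}\,\psi(R),
\]
which is the announced identity. Finally, extending $\psi$ to $\wh\Sigma_L$ by $\psi(\underline{R}):=\psi(R_0)$ produces a function that is constant on each clopen elementary cylinder $[R]\subset\wh\Sigma_L$, so both $\psi$ and $\log\psi$ are uniformly continuous.

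The hard part will be securing the uniform upper bound $\psi_k(R)\le\Lambda^{m^+(R)}$: it rests precisely on the inequality $\lambda_k\le\Lambda$, i.e.\ on the fact that the exponential growth rate of loops through $a$ inside the finite subgraph $F_k$ is dominated by $h_G(\sigma)$. This is where finiteness of $h_G(\sigma)$, proved in the previous subsection, is crucial: without it the eigenvectors $\psi_k$ could blow up on distant vertices and no pointwise limit would survive. The other ingredient that must be handled carefully is the choice of the exhaustion $F_k$, which must be strongly connected so that the Perron--Frobenius theorem produces a \emph{positive} eigenvector; transitivity of $(\wh\Sigma,\sigma)$ combined with local finiteness is what makes such an exhaustion available.
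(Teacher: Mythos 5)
Your proof is correct as a proof of the literal statement, but it follows a genuinely different route from the paper. The paper invokes the explicit constructions of Sarig (recurrent case) and Cyr (transient case), which build $\psi$ as a limit of ratios of truncated partition sums involving the word-counting quantities $Z'_n(R,S)$. You instead construct $\psi$ via Perron--Frobenius eigenvectors of an exhaustion by finite strongly connected subgraphs containing a base symbol $a$, together with a compactness (diagonal extraction) argument; local finiteness lets you pass to the limit in the eigenvector identity. The technical estimates you give (monotone convergence $\lambda_k\nearrow e^{h_G(\sigma)}$ from Gurevich's theorem, the two-sided path bounds $\Lambda^{-m^-(R)}\le\psi_k(R)\le\Lambda^{m^+(R)}$, passing to the limit using local finiteness) are all sound, and you avoid the case split into recurrent/transient entirely, which is elegant. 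What you lose is crucial to the paper, however: you produce \emph{some} harmonic $\psi$ with no explicit combinatorial description, whereas the paper needs precisely the Cyr/Sarig formulas expressing $\psi$ as limits of ratios of $Z'_n$-counts. Those formulas are reinterpreted in Theorem~\ref{GlobContTr} as counts of intersections of local stable and unstable manifolds (Claim~\ref{claim:intersections_count_words} and Claim~\ref{claim:cylinder_measure}), and it is this dynamical reading of the harmonic function that drives the global holonomy invariance of the Margulis measures. Because your $\psi$ is extracted by a soft compactness argument, it need not admit such a representation (and in the transient case harmonic functions are not unique, so it may be a genuinely different function). So while your argument does prove Theorem~\ref{SigHar} as stated, it would not support the subsequent section of the paper without additional work identifying your $\psi$ with a $Z'_n$-ratio limit.
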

\begin{proof}

There are two cases depending on whether $\wh{\Sigma}$ is transient or recurrent. Each involves an explicit construction of the harmonic function as a particular limit. We begin with the recurrent case, which is due to Sarig.

If $\wh{\Sigma}$ is not topologically mixing, we can pass to a power of $\sigma$, $\sigma^k$, so that $\sigma^k$ leaves invariant a finite decomposition of $\wh{\Sigma}$ into components each of which is topologically mixing under $\sigma^k$. Hence we may suppose in the sequel that $k=1$ so that $\sigma$ has a fixed point $\underline{a}$ with initial symbol $a_0$.

Fix a periodic point $\underline{a}$. In \cite{SarigNR}, Sarig shows that for a recurrent, topologically mixing topological Markov shift, there is a sequence $n_k\to +\infty$ such that for each $R\in \mc{R}$, that 
\[
\psi(R)=\lim_{k\to \infty} \frac{\sum_{i\le n_k} e^{-ih_G(\sigma)}Z_i'(R,a_0)}{\sum_{i\le n_k} e^{-ih_G(\sigma)}Z_i'(a_0,a_0)},
\]
converges, and this limit is positive and finite for each $R$. Although Sarig does not write this exact expression, it is equivalent to the quantity $\nu_n^b(X)$ defined at the top of page 17 of that paper in the case that $\phi=0$, which is evident after noting that $L^k_0(1_{[B]})(x)$ in that paper counts the number of paths from $B$ to $x_0$. The surrounding discussion shows the additional claim that $\psi$ is positive and finite. That the function $\psi$ is harmonic is immediate by using the assertion that $L^*_0\nu=e^{h_G(\sigma)}\nu$ on the same page and writing out the definitions.

We now consider the transient case, where we follow \cite[\textsection~4.2.2]{VanCyrThesis}. As in the case of Sarig's work, Cyr works with a transitive, irreducible, topologically mixing topological Markov shift. As before, we assume that we have a word $\underline{a}$ that is fixed by $\sigma$. Cyr begins by fixing a word $\underline{\omega}\in \wh{\Sigma}$ such that for $i>j\ge 0$, $\omega_i\neq \omega_j$. Then Cyr shows that there exists a sequence $n_k$ such that for each $R\in \mc{R}$ the following limit exists and is positive and bounded:
\[
\psi'(R)= \lim_{k\to \infty} \frac{\sum_{i=0}^{\infty} e^{-ih_G(\sigma)} Z_i'(R,\omega_{n_k})}{\sum_{i=0}^\infty e^{-ih_G(\sigma)}Z_i'(a_0,\omega_{n_k})}.
\]
This definition is the quantity $f_{\infty}(R)$, which is defined at the top of p.~100 in \cite{VanCyrThesis}. The function $\psi'$ is harmonic, which is immediate from Theorem 4.3 in \cite{VanCyrThesis} as may be checked by writing out the definition of $\phi$-conformal applied to a cylinder $[R]$.
\end{proof}

\subsection{Margulis Measures as Conditionals of Generalized Measure of Maximal Entropy}\label{subsec:harmonic_into_measure}

In this section we construct a family of measures on unstable leaves that are invariant by the dynamics up to a multiplicative constant related to the entropy. When an Anosov diffeomorphism such as ours admits a finite measure of maximal entropy, the conditional measures of the MME can satisfy this property. Even when the system need not admit an MME, one can still define the conditional leaf measures and integrate them in a way which yields an invariant measure that may be infinite or even non-conservative. Hence, the measures we construct can be thought of as the conditionals of a generalized MME.

\noindent We may now state the existence of invariant measures in this setting.

\begin{theorem}[{\cite[Theorem~5.1]{InvFams}}]\label{forExactHolo2}
Let $(\wh{\Sigma},\sigma_R)$ be an irreducible, locally compact, topological Markov shift, $(\widehat{\Sigma}_L,\sigma_R)$ the associated $1$-sided shift, and let $\psi$ be a harmonic function that is constant on partition sets. Then there exists a family of Borel probability measures on $\widehat{\Sigma}$, $\{\widehat{p}_{\underline{R}}\}_{\underline{R}\in \widehat{\Sigma}_L}$, such that 
\begin{enumerate}
\item For all $\underline{R}\in\widehat{\Sigma}_L$, $\widehat{p}_{\underline{R}}$ is carried by, and fully-supported, on $\tau^{-1}[\{\ul{R}\}]=W^u_0(\underline{R})$. 
\item For all $\underline{R}\in\widehat{\Sigma}_L$, \[\widehat{p}_{\underline{R}}\circ \sigma^{-1}=\sum_{\sigma_R\underline{S}=\underline{R}}e^{-h_G(\sigma)+\log\psi(\underline{S})-\log\psi\circ\sigma_R(\underline{S})}\widehat{p}_{\underline{S}},\]

\item For all $\underline{R}, \underline{S}\in\widehat{\Sigma}_L$, $\widehat{p}_{\underline{R}}\circ \widehat{\Gamma}_{\underline{R}\underline{S}}^{-1}= \widehat{p}_{\underline{S}} $.
\end{enumerate}
\end{theorem}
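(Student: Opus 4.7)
The plan is to construct each measure $\widehat{p}_{\underline{R}}$ cylinder-by-cylinder using the harmonic function $\psi$ and then to invoke Kolmogorov's extension theorem. Fix $\underline{R}\in\widehat{\Sigma}_L$ with $R_0=R$; the leaf $W^u(\underline{R})\subset\widehat{\Sigma}$ is parameterized by admissible forward words based at $R$. I would define
\[
\widehat{p}_{\underline{R}}\bigl([R_0,R_1,\ldots,R_n]\cap W^u(\underline{R})\bigr) := e^{-nh_G(\sigma)}\frac{\psi(R_n)}{\psi(R_0)}
\]
on every admissible cylinder. The eigenfunction identity $\sum_{R_n\to R_{n+1}}\psi(R_{n+1})=e^{h_G(\sigma)}\psi(R_n)$ shows that the length-$(n+1)$ refinements of a given length-$n$ cylinder add up to the mass of the latter; since by local compactness of $\widehat{\Sigma}$ this refinement is finite, the assignment is consistent and Kolmogorov extension produces a Borel probability measure carried by $W^u(\underline{R})$. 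The case $n=0$ gives total mass $1$.

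Properties (1) and (3) are then nearly immediate. Since $\psi>0$, every cylinder has positive mass and every relative open set in $W^u(\underline{R})$ contains a cylinder, so $\widehat{p}_{\underline{R}}$ is fully supported. For (3), the holonomy $\widehat{\Gamma}_{\underline{R}\underline{S}}$ alters only the past and preserves the forward word of each point, so it sends $[R_0,R_1,\ldots,R_n]\cap W^u(\underline{R})$ bijectively to $[R_0,R_1,\ldots,R_n]\cap W^u(\underline{S})$, and both cylinders receive the same mass (which depends only on $R_0,\ldots,R_n$).

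The main technical step is the conformal rule (2); I would verify it on generating cylinders $A=[T_0,T_1,\ldots,T_n]$ and extend by a monotone class argument. A direct calculation on the left shift shows $\sigma^{-1}A\cap W^u(\underline{R})=[R_0,T_0,T_1,\ldots,T_n]\cap W^u(\underline{R})$, which is non-empty exactly when $R_0\to T_0$ and has mass $e^{-(n+1)h_G(\sigma)}\psi(T_n)/\psi(R_0)$. On the right-hand side, the $\sigma_R$-predecessors of $\underline{R}$ are the concatenations $\underline{S}=\underline{R}\cdot S_0$ with $R_0\to S_0$; only $S_0=T_0$ contributes because $\widehat{p}_{\underline{S}}$ is supported where $\omega_0=S_0$, yielding $e^{-nh_G(\sigma)}\psi(T_n)/\psi(T_0)$, and the weight $e^{-h_G(\sigma)+\log\psi(\underline{S})-\log\psi(\sigma_R\underline{S})}$ collapses to $e^{-h_G(\sigma)}\psi(T_0)/\psi(R_0)$ using the hypothesis that $\psi$ is constant on partition sets. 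The two sides then agree. The principal obstacle here is organizational rather than conceptual: one must keep careful track of the distinction between the left shift $\sigma$ on $\widehat{\Sigma}$ and the right shift $\sigma_R$ on $\widehat{\Sigma}_L$, and of how appending or dropping a symbol moves a point between different leaves $W^u(\cdot)$; once this bookkeeping is straight, harmonicity of $\psi$ does the real work.
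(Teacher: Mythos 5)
The paper does not prove this result; it invokes \cite[Thm.~5.1, Cor.~5.2]{InvFams} directly, so there is no internal proof to compare against. Your construction, however, is correct and is the standard way to realize such a conformal family, so it is almost certainly the construction the cited reference carries out as well. A few remarks on the details.

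Defining $\widehat{p}_{\underline{R}}$ on forward cylinders by
$e^{-nh_G(\sigma)}\psi(R_n)/\psi(R_0)$ turns the harmonic identity $\sum_{R\to S}\psi(S)=e^{h_G(\sigma)}\psi(R)$ into exactly the consistency relation one needs: each length-$n$ cylinder's mass is the sum of its (finitely many, by local compactness) length-$(n+1)$ refinements. Equivalently, you are building the law of the Markov chain with transition kernel $p(S,T)=e^{-h_G(\sigma)}\psi(T)/\psi(S)$ for $S\to T$, started at $R_0$, pushed onto $\tau^{-1}[\{\underline{R}\}]$. For the extension step, strictly speaking one should cite Ionescu--Tulcea or the Markov-chain construction rather than the bare Kolmogorov extension theorem (the state space $\mathcal{R}$ is discrete but not compact), but since $\mathcal{R}$ is countable discrete, hence Polish, this is a cosmetic distinction and no gap arises. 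Full support follows from $\psi>0$, and holonomy invariance from the fact that $\widehat{\Gamma}_{\underline{R}\,\underline{S}}$ changes only negative coordinates while your cylinder masses depend only on $R_0,\ldots,R_n$.

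Your resolution of the notational ambiguity in item (2) is the right one: the prefactor $e^{-h_G(\sigma)+\log\psi-\log\psi\circ\sigma_R}$ must be read as evaluated at $\underline{S}$ inside the sum, i.e.\ as the weight $e^{-h_G(\sigma)}\psi(\underline{S})/\psi(\sigma_R\underline{S})=e^{-h_G(\sigma)}\psi(S_0)/\psi(R_0)$. This is the interpretation consistent with the passage to $\widehat{\mu}_{\underline{R}}=\psi(\underline{R})\widehat{p}_{\underline{R}}$ in Corollary \ref{theFamMuHat}, where the factor collapses to $e^{-h_G(\sigma)}$, and your cylinder computation confirms it: both sides of (2) applied to $[T_0,\ldots,T_n]$ give $e^{-(n+1)h_G(\sigma)}\psi(T_n)/\psi(R_0)$. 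The proposal is sound.
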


Items (1) and (2) appear explicitly in \cite[Theorem~5.1]{InvFams}, where the full support property is an immediate consequence of item (2): $\widehat{p}_{\underline{R}}([R_0,a_1,\ldots ,a_{n-1}])=e^{-nh_G(\sigma)}\frac{\psi(R_0)}{\psi(a_{n-1})}>0$. Item 
(3) is a consequence of the following Corollary \cite[Corollary~5.2]{InvFams}.

\begin{cor}\label{theFamMuHat}
Under the assumptions of Theorem \ref{forExactHolo2}, 
define the family $\{\widehat{\mu}_{\underline{R}}\coloneqq\psi(\underline{R})\cdot\widehat{p}_{\underline{R}}\}_{\underline{R}\in\widehat{\Sigma}_L}$. These measures then satisfy that:
\begin{enumerate}
\item For all $\underline{R}\in\widehat{\Sigma}_L$, $\widehat{\mu}_{\underline{R}}$ is carried by, and fully-supported on, $\tau^{-1}[\{\underline{R}\}]=W^u_0(\underline{R})$,
\item For all $\underline{R}\in\widehat{\Sigma}_L$, $\widehat{\mu}_{\underline{R}}\circ \sigma^{-1}=e^{-h_G(\sigma)}\sum_{\sigma_R\underline{S}=\underline{R}}\widehat{\mu}_{\underline{S}}$,

\item For all $\underline{R}, \underline{S}\in\widehat{\Sigma}_L$, $\widehat{\mu}_{\underline{R}}\circ \widehat{\Gamma}_{\underline{R}\underline{S}}^{-1}= \widehat{\mu}_{\underline{S}} $.
\end{enumerate}
\end{cor}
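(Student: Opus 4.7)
The plan is to verify each of the three properties directly by substituting $\widehat{\mu}_{\underline{R}} = \psi(\underline{R}) \widehat{p}_{\underline{R}}$ into the corresponding conclusions of Theorem \ref{forExactHolo2}, and observing that the positive factor $\psi(\underline{R})$ is precisely what is required to absorb the $\psi$-dependent Radon-Nikodym factor appearing in part (2). Because $\psi$ is assumed to be constant on partition sets, this same normalization also makes the holonomy property hold on the nose.

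For (1), I would note that since $\underline{R}$ is fixed, $\psi(\underline{R})$ is a single positive constant, so multiplying $\widehat{p}_{\underline{R}}$ by it changes neither its support nor the set that carries it. The conclusion is then immediate from Theorem \ref{forExactHolo2}(1).

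For (2), the key point is the correct interpretation of the factor $e^{-h_G(\sigma) + \log\psi - \log\psi\circ\sigma_R}$ in Theorem \ref{forExactHolo2}(2): evaluated at a preimage $\underline{S}$ with $\sigma_R\underline{S}=\underline{R}$, this factor equals $e^{-h_G(\sigma)}\psi(\underline{S})/\psi(\underline{R})$. Multiplying both sides of the identity in Theorem \ref{forExactHolo2}(2) by $\psi(\underline{R})$ thus yields
\[
\widehat{\mu}_{\underline{R}}\circ\sigma^{-1} \;=\; \sum_{\sigma_R\underline{S}=\underline{R}} e^{-h_G(\sigma)}\,\psi(\underline{S})\,\widehat{p}_{\underline{S}} \;=\; e^{-h_G(\sigma)}\sum_{\sigma_R\underline{S}=\underline{R}}\widehat{\mu}_{\underline{S}},
\]
which is the desired conformal scaling identity. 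For (3), I would invoke the hypothesis that $\psi$ is constant on partition sets: since $\widehat{\Gamma}_{\underline{R}\underline{S}}$ is defined only when $\underline{R},\underline{S}\in[R_0]$ for a common partition set, we have $\psi(\underline{R})=\psi(\underline{S})$, so Theorem \ref{forExactHolo2}(3) gives
\[
\widehat{\mu}_{\underline{R}}\circ\widehat{\Gamma}_{\underline{R}\underline{S}}^{-1} = \psi(\underline{R})\,\widehat{p}_{\underline{R}}\circ\widehat{\Gamma}_{\underline{R}\underline{S}}^{-1} = \psi(\underline{R})\,\widehat{p}_{\underline{S}} = \psi(\underline{S})\,\widehat{p}_{\underline{S}} = \widehat{\mu}_{\underline{S}}.
\]

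There is essentially no obstacle in the proof; it is a direct rescaling argument, and the only genuine content is the convention by which the factor $\log\psi - \log\psi\circ\sigma_R$ is read on each preimage $\underline{S}$. The fact that $\psi$ depends only on the zero-coordinate symbol is what makes both the cancellation in (2) automatic and the holonomy invariance in (3) exact rather than merely conformal.
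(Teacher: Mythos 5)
Your proof is correct and follows the intended derivation: the paper does not spell out the argument, but it is exactly this direct rescaling of Theorem \ref{forExactHolo2} by the positive factor $\psi(\underline{R})$, using that $\psi$ depends only on the zero-coordinate to get the exact cancellation in (2) and the on-the-nose holonomy invariance in (3). Your reading of the Radon--Nikodym factor $e^{-h_G(\sigma)+\log\psi-\log\psi\circ\sigma_R}$ as $e^{-h_G(\sigma)}\psi(\underline{S})/\psi(\underline{R})$ on each preimage $\underline{S}$ is the correct interpretation.
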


\begin{definition}[Local Margulis measures]\label{theFamMu}
Let $\{\widehat{\mu}_{\underline{R}}\}_{\underline{R}\in\widehat{\Sigma}_L}$ be the family of finite Borel measures on $M$ given by Corollary \ref{theFamMuHat}. We define for $\underline{R}\in \wh{\Sigma}_L$ the family of finite Borel measures on $M$,
\[\mu_{\underline{R}}=\wh{\mu}_{\underline{R}}\circ\wh{\pi}^{-1}.\]
The family $\{\mu_{\underline{R}}\}_{\underline{R}\in\widehat{\Sigma}_L}$ is called the {\em local Margulis measures}.
\end{definition}

\begin{definition}
Given $R\in\mathcal{R}$ for any two points $x,y\in 
R$, we define the local stable  {\em holonomy map}  $\Gamma_{x,y}\colon W^u(x,R)\to W^u(y,R) $
by \[\Gamma_{x,y}(z)\coloneqq [y,z].\]
\end{definition}

Note that the above definition only applies to the segment of a local unstable manifold lying within a fixed Markov rectangle $R$; substantial effort will be dedicated to extending holonomy invariance from these holonomies to the full stable holonomies on $M$, i.e.~those that do not respect the rectangles.

\begin{theorem}\label{propsOfMargulis} Given the Margulis measures $\{\mu_{\underline{R}}\}_{\underline{R}\in\widehat{\Sigma}_L}$ from Definition \ref{theFamMu},
\begin{enumerate}
\item For all $ \underline{R}\in\widehat{\Sigma}_L$, $\mu_{\underline{R}}$ is carried by, and fully-supported on, $W^u(x,R_0)$, for any $x\in \widehat{\pi}[W^u_0(\underline{R})]$,
\item For all $ \underline{R}\in\widehat{\Sigma}_L$, $\mu_{\underline{R}}\circ f^{-1}=e^{-h_G(\sigma)}\sum_{\sigma_R\underline{S}=\underline{R}}\mu_{\underline{S}}$,
\item For all $\underline{R}, \underline{S}\in\widehat{\Sigma}_L$, $\mu_{\underline{R}}\circ \Gamma_{x,y}^{-1}= \mu_{\underline{S}} $ for any $x\in \widehat{\pi}[W^u_0(\ul{R})]$, $y\in \widehat{\pi}[W^u_0(\ul{S})]$.
\end{enumerate}
\end{theorem}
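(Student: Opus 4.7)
The plan is to transport the corresponding properties of the symbolic family $\{\widehat{\mu}_{\underline{R}}\}$ from Corollary \ref{theFamMuHat} to $M$ by pushing forward under the coding map $\widehat{\pi}$. The deductions of (1) and (2) are formal once one makes explicit both the image of the symbolic fiber $\tau^{-1}[\{\underline{R}\}]$ under $\widehat{\pi}$ and the intertwining relation $\widehat{\pi}\circ\sigma=f\circ\widehat{\pi}$ from Theorem \ref{injectiveness}. The only genuinely geometric ingredient, and the main obstacle, is a conjugacy between the symbolic holonomy $\widehat{\Gamma}_{\underline{R}\underline{S}}$ and the Smale-bracket holonomy $\Gamma_{x,y}$, which is needed to deduce (3).

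For (1), I would first identify $\widehat{\pi}\bigl[\tau^{-1}[\{\underline{R}\}]\bigr]=W^u(x,R_0)$ for any $x$ in this image. The inclusion $\subseteq$ follows because a sequence with past $\underline{R}$ codes a point whose backward orbit lies in $(R_{-i})_{i\ge 0}$, which by the Markov property forces it onto the local unstable plaque through $x$. For the reverse inclusion, if $z\in W^u(x,R_0)$, then iterating the Markov property $f[W^u(f^{-1}(x),R_{-1})]\supseteq W^u(x,R_0)$ backward shows $f^{-i}(z)\in W^u(f^{-i}(x),R_{-i})\subset R_{-i}$ for every $i\ge 0$, so $z$ admits a coding whose past is $\underline{R}$. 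Combined with continuity of $\widehat{\pi}$ and the full support in Corollary \ref{theFamMuHat}(1), this gives (1).

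For (2), I would push Corollary \ref{theFamMuHat}(2) forward using the factor identity $\widehat{\pi}\circ\sigma=f\circ\widehat{\pi}$:
\[
\mu_{\underline{R}}\circ f^{-1}
=\widehat{\mu}_{\underline{R}}\circ(f\circ\widehat{\pi})^{-1}
=\widehat{\mu}_{\underline{R}}\circ\sigma^{-1}\circ\widehat{\pi}^{-1}
=e^{-h_G(\sigma)}\sum_{\sigma_R\underline{S}=\underline{R}}\mu_{\underline{S}}.
\]

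The heart of the argument is (3), which reduces via Corollary \ref{theFamMuHat}(3) to the geometric identity
\[
\widehat{\pi}\circ\widehat{\Gamma}_{\underline{R}\underline{S}}=\Gamma_{x,y}\circ\widehat{\pi}\qquad\text{on}\ \tau^{-1}[\{\underline{R}\}].
\]
Given $\underline{w}^{\pm}\in\tau^{-1}[\{\underline{R}\}]$ coding $z$, the sequence $\widehat{\Gamma}_{\underline{R}\underline{S}}(\underline{w}^{\pm})$ has past $\underline{S}$ and the same positive tail as $\underline{w}^{\pm}$. The first condition places its $\widehat{\pi}$-image in $W^u(y,R_0)$ by the argument of the previous paragraph; the second places it in $W^s(z,R_0)$ by the forward-time version of the same Markov-property argument, together with the expansivity choice in Remark \ref{rem:expansive_partition} (two points of $R_0$ whose forward orbits follow a common sequence of rectangles must lie on a common local stable plaque). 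The two plaques meet in the single point $[y,z]=\Gamma_{x,y}(z)$, and this value depends neither on the choice of $x$ in $W^u(x,R_0)$ nor on the choice of $y$ in $W^u(y,R_0)$, which matches the universal quantifier in the statement. Pushing Corollary \ref{theFamMuHat}(3) through this conjugacy then yields (3). The holonomy conjugacy is the main obstacle; once it is in place, the rest is a formal manipulation of push-forwards under $\widehat{\pi}$.
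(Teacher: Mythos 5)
Your argument is correct and follows the same strategy as the paper: transport the properties of $\{\widehat{\mu}_{\underline{R}}\}$ through $\widehat{\pi}$, using the intertwining $\widehat{\pi}\circ\sigma=f\circ\widehat{\pi}$ for (2) and the conjugacy $\widehat{\pi}\circ\widehat{\Gamma}_{\underline{R}\underline{S}}=\Gamma_{x,y}\circ\widehat{\pi}$ on $\tau^{-1}[\{\underline{R}\}]$ for (3). The only presentational difference is in (1): the paper directly exhibits, inside each open $U\subseteq W^u(x,R_0)$, the $\widehat{\pi}$-image of a cylinder with positive $\widehat{\mu}_{\underline{R}}$-mass, whereas you derive full support abstractly from surjectivity of $\widehat{\pi}$ restricted to $\tau^{-1}[\{\underline{R}\}]$ together with Corollary \ref{theFamMuHat}(1) -- both are forms of the same pushforward idea.
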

\begin{proof}\text{ }
\begin{enumerate}
\item Fix some point $x$ in a rectangle $R\subset M$, and let $U$ be an open subset of $W^u(x,R)$ containing $x$. Let $T$ be a strip defined by
\[
\bigcup_{y\in U} W^s(y,R).
\]
Let $Y$ be as in Definition \ref{defn:Y}. By the density of $Y$, there exists a point $z\in \mathrm{int}(T)\cap Y$. Consider now the point $w= [x,z]\in W^u(x,R)$. For each $i\ge 0$, let $Z_i$ be the unique element of $\mathcal{R}$ such that $f^i(z)\in \mathrm{int}(Z_i)\cap Y$. Let $n_z\in \N$ be sufficiently large that $f^{-n_z}[W^u(f^{n_z}(w),Z_{n_z})]\subseteq T\cap U$. Then,
$$\mu_{\underline{R}}(U)\geq \widehat{\mu}_{\underline{R}}([Z_1,\ldots, Z_{n_z}])>0,$$
since $\widehat{\mu}_{\underline{R}}$ is fully supported by Theorem \ref{forExactHolo2}.
\item This follows from item (2) in Corollary \ref{theFamMuHat} by using that $\widehat{\pi}\circ \sigma=f\circ \widehat{\pi}$.
\item 
The conclusion is immediate when we note that the symbolic and topological holonomies are intertwined by $\widehat{\pi}$, i.e.~ $\Gamma_{x,y}\circ\wh{\pi}=\wh{\pi}\circ \wh{\Gamma}_{\underline{R}\underline{S}}$ on the set $W^u_0(\ul{R})$.
\end{enumerate}
\end{proof}

We have now verified the fundamental properties of the Margulis measures on local unstable leaves contained inside of Markov rectangles. We now wish to extend these measures to global unstable leaves.

Note that any element $\underline{R}\in \wh{\Sigma}_L$ is associated to a unique global unstable manifold in $M$: $W^u(x)=W^u(y)$ for any $x,y\in \hat{\pi}(W^u_0(\ul{R}))$. We denote this unstable manifold by $W^u(\underline{R})$. (Note that this is distinguished from the combinatorial local stable manifold by the lack of the $0$ subscript). Define $W^s(\ul{R})$ analogously for $\ul{R}\in \widehat{\Sigma}_R$. This unstable manifold is the image of a symbolic  unstable manifold. Hence one would like to define the Margulis measures on $W^u(\underline{R})$ by pushing forward the measure from the combinatorial unstable manifold to $M$.  In order to do this, one must at least know that the resulting measure is locally finite. This is guaranteed by the following claim.

\begin{claim}\label{finiteOnArcs}
Let the set $\{\mu_{\underline{S}}\}_{\underline{S}\in \wh{\Sigma}_L}$ of local Margulis measures be as in Definition \ref{theFamMu}. For any arc $I$ in an unstable manifold, there are finitely many $\underline{Q}\in \wh{\Sigma}_L$ such that $\mu_{\underline{Q}}(I)>0$.
\end{claim}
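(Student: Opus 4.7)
The plan is to decompose the count of $\underline{Q} \in \wh{\Sigma}_L$ with $\mu_{\underline{Q}}(I) > 0$ into three finite contributions. First observe that, by Theorem~\ref{propsOfMargulis}(1), the support $K(\underline{Q})$ of $\mu_{\underline{Q}}$ is the local unstable leaf $W^u(x, Q_0)$ for any $x \in \wh{\pi}(\tau^{-1}(\underline{Q}))$; the condition $\mu_{\underline{Q}}(I) > 0$ then forces $K(\underline{Q}) \cap I \neq \varnothing$, so $K(\underline{Q})$ lies in the single global unstable leaf $W^u(z)$ containing $I$. I would then argue that only finitely many rectangles $R \in \mc{R}$ meet $I$: compactness of $I$ lets one cover it by finitely many balls of radius $r/2$, and Lemma~\ref{almostFinite} combined with Theorem~\ref{ourPtn} bound the number of rectangles of $\mc{R}$ meeting each such ball. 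For each such rectangle $R$, only finitely many connected components of $W^u(z) \cap R$ meet $I$, because $I$ can only enter or exit $R$ through the stable boundary $\partial^s R$, and transverse intersection of the $1$-dimensional unstable and stable foliations makes $W^u(z) \cap \partial^s R$ discrete, hence $I \cap \partial^s R$ finite by compactness.

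The third and main step is to bound, for each local unstable leaf $K \subseteq R$ meeting $I$, the number of $\underline{Q} \in \wh{\Sigma}_L$ with $K(\underline{Q}) = K$ by $C \coloneqq (D_r+1)^2 - 1$. Since the shift $\wh{\Sigma}$ is locally compact with bounded degree, $\tau^{-1}(\underline{Q}) \subseteq \wh{\Sigma}$ is compact, so $\wh{\pi}(\tau^{-1}(\underline{Q}))$ is a closed subset of $K$; combined with the \emph{fully supported} statement in Theorem~\ref{propsOfMargulis}(1), this forces $\wh{\pi}(\tau^{-1}(\underline{Q})) = K$. Fixing any single $y \in K$, one therefore finds $\underline{R}^{\pm} \in \tau^{-1}(\underline{Q})$ with $\wh{\pi}(\underline{R}^{\pm}) = y$, exhibiting $\underline{Q} = \tau(\underline{R}^{\pm})$ as the backward part of a coding of $y$. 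Proposition~\ref{bdd21} bounds the total number of codings of $y$ by $C$, giving at most $C$ such $\underline{Q}$.

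Combining the three steps shows that only finitely many $\underline{Q}$ satisfy $\mu_{\underline{Q}}(I) > 0$, since every such $\underline{Q}$ is assigned to a unique triple (rectangle meeting $I$, local unstable leaf meeting $I$, one of at most $C$ backward codings). The main subtle point is the identification $\wh{\pi}(\tau^{-1}(\underline{Q})) = K$ in the third step: this is what promotes the per-point bound of Proposition~\ref{bdd21} to a bound on the total number of $\underline{Q}$ with a given support, without having to invoke any genericity of points in $K$ (which would fail for boundary leaves). The other ingredients are comparatively routine uniform-geometry arguments.
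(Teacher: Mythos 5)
Your proof is correct, and it follows the same high-level structure as the paper's: reduce (via compactness of $I$ and uniform geometry) to bounding, for a single local unstable leaf $K$ in a rectangle, the number of chains $\underline{Q}$ whose supporting leaf is $K$, and then obtain that bound from Proposition~\ref{bdd21} by pigeonhole on codings of a single point. The key step, however, is carried out differently. The paper's proof picks, for each offending backward chain $\underline{S}$, a fixed forward word $\underline{R}'$ coding the future of a basepoint $x\in K$ and uses the Markov property to glue $\underline{S}\cdot\underline{R}'$ into a valid two-sided word coding $x$; distinct $\underline{S}$'s then give distinct preimages of $x$ under $\widehat{\pi}$, contradicting the uniform bound. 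You instead observe that $\tau^{-1}(\underline{Q})$ is compact (by the degree bound of Theorem~\ref{ourPtn}), so $\widehat{\pi}(\tau^{-1}(\underline{Q}))$ is a closed subset of $K$, and full support of $\mu_{\underline{Q}}$ from Theorem~\ref{propsOfMargulis}(1) upgrades this to $\widehat{\pi}(\tau^{-1}(\underline{Q}))=K$; any fixed $y\in K$ then already has a coding whose past is $\underline{Q}$, and Proposition~\ref{bdd21} applies directly. Your route trades the explicit Markov gluing for a compactness/support argument, which is arguably cleaner since it avoids choosing an auxiliary future $\underline{R}'$ and immediately yields that the map $\underline{Q}\mapsto(\text{coding of }y)$ is injective. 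The auxiliary steps (finitely many rectangles meeting $I$ via Lemma~\ref{almostFinite} and Theorem~\ref{ourPtn}, finitely many components of $W^u(z)\cap R$ meeting $I$ via transversality of $I$ with the compact stable arcs in $\partial^s R$) are correct and make explicit the reduction that the paper states only briefly.
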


\begin{proof}
It suffices to show this for a the local leaf of $W^u(x,R)$ lying within a Markov rectangle. For a point $y\in M$, we can consider all of the associated codings $\ul{R}\in \wh{\Sigma}_L$ that code the past itinerary of $y$. Let this set be called $\mc{P}(y)$.  For a set $X\subset M$, we write $\mc{P}(X)=\cup_{x\in X}\mc{P}(x)$. The claim is then immediate once we know that the union of $\mc{P}(x)$ over all $y\in W^u(x,R)$ is finite.

By Lemma \ref{bdd21}, there exists a constant $C$ such that any point $x\in M$ has at most $C$ pre-images under $\wh{\pi}$. There exist a finite number of local leaves $W^u(x,T_i)$ that $W^u(x,R)$ intersects. It then suffices to show that for each $T_i$ that $\mc{P}(W^u(x,T_i))$ is finite.
Suppose that there there exist distinct codings $\ul{S}^1,\ldots,\ul{S}^n\in \wh{\Sigma}_L\in \mc{P}(W^u(x,T_i))$, where $n>C$. Let $\ul{R}$ be a coding of $x$ with $R_0=T_i$ and $\ul{R}'$ be $(R_i)_{i\ge 0}$ then by the Markov property, for each $1\le i\le n$, the word $\ul{S}^i\cdot \ul{R}'\in \wh{\Sigma}$ is valid. Moreover, each  of these words codes the point $x$ as they code the unique point in $W^s(x,T_i)\cap W^u(\ul{S})$. Thus we have found $n>C$ distinct words coding the point $x$; a contradiction. The result follows.
\end{proof}

The following corollary is then immediate from Claim \ref{finiteOnArcs}.
\begin{cor}\label{GlobMeasures}
Let the set $\{\mu_{\underline{S}}\}_{\underline{S}\in \wh{\Sigma}_L}$ of local Margulis measures be as in Definition \ref{theFamMu}.
For all $\underline{R}\in \widehat{\Sigma}_L$, the measure defined by $$\mu_{W^u(\underline{R})}\coloneqq\lim_{n\to\infty}\sum_{\sigma_R^n\underline{S}=\sigma_R^n\underline{R}}\mu_{\underline{S}},$$
is well-defined as an increasing sum of measures and is finite on any finite arc $I\subseteq W^u(\underline{R})$ where $W^u(\underline{R})$ is defined above to be the full unstable manifold of $\widehat{\pi}(\underline{R})$.
\end{cor}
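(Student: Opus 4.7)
The plan is to interpret the right-hand side as a monotone limit of Borel measures and to bound this limit on each finite arc using Claim \ref{finiteOnArcs}. The two ingredients to verify are monotonicity of the partial sums and uniform boundedness on arcs; monotone convergence then packages these into a well-defined Borel measure with the asserted finiteness property.

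For monotonicity, I would unwind the definition of $\sigma_R$ on $\wh{\Sigma}_L$. Since $(\sigma_R\underline{R})_i = R_{i-1}$ for $i\leq 0$, iterating gives $(\sigma_R^n \underline{R})_i = R_{i-n}$, so the condition $\sigma_R^n \underline{S} = \sigma_R^n \underline{R}$ is equivalent to $S_j = R_j$ for every index $j \leq -n$. Relaxing $n$ to $n+1$ weakens this constraint, so
\[
\{\underline{S} : \sigma_R^n \underline{S} = \sigma_R^n \underline{R}\} \;\subseteq\; \{\underline{S} : \sigma_R^{n+1} \underline{S} = \sigma_R^{n+1} \underline{R}\}.
\]
Since each $\mu_{\underline{S}}$ is a non-negative Borel measure, the partial sums $\mu_n := \sum_{\sigma_R^n \underline{S} = \sigma_R^n \underline{R}} \mu_{\underline{S}}$ satisfy $\mu_n \leq \mu_{n+1}$, and the pointwise limit $\mu_{W^u(\underline{R})} := \lim_n \mu_n$ exists as a Borel measure on $M$.

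For finiteness on arcs, let $I \subset W^u(\underline{R})$ be a finite arc. Any $\underline{S}$ contributing nontrivially to $\mu_n(I)$ for some $n$ must satisfy $\mu_{\underline{S}}(I) > 0$, and by Claim \ref{finiteOnArcs} only finitely many such $\underline{S} \in \wh{\Sigma}_L$ exist. Thus for every $n$,
\[
\mu_n(I) \;\leq\; \sum_{\underline{Q}:\, \mu_{\underline{Q}}(I) > 0} \mu_{\underline{Q}}(I) \;<\; \infty,
\]
where each summand is finite because $\mu_{\underline{Q}} = \wh{\mu}_{\underline{Q}} \circ \wh{\pi}^{-1}$ and $\wh{\mu}_{\underline{Q}}$ is a finite measure by Corollary \ref{theFamMuHat}. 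Passing to the limit in $n$ yields $\mu_{W^u(\underline{R})}(I) < \infty$.

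The argument is essentially a bookkeeping of Claim \ref{finiteOnArcs} via monotone convergence, so I do not anticipate a serious obstacle. The one step worth checking with care is that the preimage sets under $\sigma_R^n$ are nested in the correct direction; this is immediate once one writes out the action of the right-shift on a left-infinite word, but it is easy to mix up if one confuses left- and right-shifts on $\wh{\Sigma}_L$.
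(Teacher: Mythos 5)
Your proof is correct and follows the same approach as the paper: monotonicity of the partial sums (which the paper obtains even more briefly by applying $\sigma_R$ to both sides of $\sigma_R^n\underline{S}=\sigma_R^n\underline{R}$, rather than unwinding the coordinate action as you do) combined with Claim~\ref{finiteOnArcs} to bound the mass on finite arcs. Your additional observation that each summand $\mu_{\underline{Q}}(I)$ is finite because $\wh{\mu}_{\underline{Q}}$ is a finite measure is a detail the paper leaves implicit but that does complete the argument.
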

\begin{proof}
The sum is increasing since $\sigma_R^n\underline{S}= \sigma_R^n\underline{R}$ implies that $\sigma_R^{n+1}\underline{S}= \sigma_R^{n+1}\underline{R}$. The limit exists because by Claim \ref{finiteOnArcs} each arc is given mass by only finitely many terms in the sum.
\end{proof}

Later we will need the following recurrence properties of $\widehat{\Sigma}_L$.

\begin{definition}
The {\em recurrent part of $\widehat{\Sigma}_L$}, which we denote $\wh{\Sigma}_L^{\#}$, is the set of words in $\wh{\Sigma}_L$ that contain some particular symbol infinitely many times.
\end{definition}

In particular, the recurrent part of $\widehat{\Sigma}_L$ contains all the periodic chains in $\widehat{\Sigma}_L$. 
Note that $\tau^{-1}[\widehat{\Sigma}_L^\#]$ is exactly the set of all codings of all points that return to some compact subset of $M$ infinitely often under the forward iteration of $f^{-1}$.

\begin{prop}\label{perLeaves}
Let $ \underline{R}\in \widehat{\Sigma}_L^\#$, and let $n_k$ be an increasing sequence such that $\sigma_R^{n_k}\underline{R}\in [R_{n_0}]$ for all $k\geq0$. Then, letting $\hat{\mu}_{\ul{S}}(1)$ denote $\int 1\,d\hat{\mu}_{\ul{S}}$,
\begin{equation}\label{infMeas}
\lim_{n\to\infty}\sum_{\sigma_R^n\underline{S}=\sigma_R^n\underline{R}} \widehat{\mu}_{\underline{S}}(1)=\infty.\end{equation}
\end{prop}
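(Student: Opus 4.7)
The plan is to evaluate the sum in (\ref{infMeas}) exactly and then use the recurrence hypothesis to drive it to infinity. First I would observe that by Corollary \ref{theFamMuHat} and the fact that each $\widehat{p}_{\underline{S}}$ is a probability measure, one has $\widehat{\mu}_{\underline{S}}(1) = \psi(\underline{S}) = \psi(S_0)$, since $\psi$ is constant on partition sets. Next I would unwind the indexing condition: $\sigma_R^n\underline{S} = \sigma_R^n\underline{R}$ holds precisely when $S_j = R_j$ for every $j \le -n$, while the finite block $(S_{-n+1},\ldots,S_0)$ ranges over all admissible continuations of the symbol $R_{-n}$. Hence the sum we want to evaluate equals
\[
\sum_{R_{-n}\to S_{-n+1}\to \cdots \to S_0} \psi(S_0).
\]

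The core computation is then to iterate the harmonicity relation from Theorem \ref{SigHar}, $\psi(T) = e^{-h_G(\sigma)}\sum_{T\to U}\psi(U)$. Applying this identity $n$ times, starting at the symbol $R_{-n}$ and working forward along admissible paths, gives
\[
\sum_{R_{-n}\to S_{-n+1}\to\cdots\to S_0}\psi(S_0) = e^{n\, h_G(\sigma)}\,\psi(R_{-n}).
\]
Equivalently, in operator form, $\sum_{\sigma_R^n\underline{S} = \sigma_R^n\underline{R}} \widehat{\mu}_{\underline{S}}(1) = (L_0^n\psi)(\sigma_R^n\underline{R}) = e^{n h_G(\sigma)}\psi(R_{-n})$.

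Now I would apply the recurrence assumption. Along the subsequence $n_k$, the symbol $R_{-n_k}$ is the fixed symbol $R_{n_0}$, so $\psi(R_{-n_k}) = \psi(R_{n_0})$ is a strictly positive constant (positivity is part of the definition of harmonic). Combined with the fact that $h_G(\sigma) > 0$—which follows from dense periodic points plus hyperbolicity (these force exponentially many periodic orbits of $f$, and by the bounded-to-one coding from Proposition \ref{bdd21} this transfers to exponential growth of periodic chains in $\wh{\Sigma}$)—we conclude $e^{n_k h_G(\sigma)}\psi(R_{n_0}) \to \infty$ as $k\to\infty$. Finally, the sum in (\ref{infMeas}) is monotone non-decreasing in $n$, since the constraint $\sigma_R^n\underline{S} = \sigma_R^n\underline{R}$ becomes weaker as $n$ grows (agreement is required on fewer coordinates, so more $\underline{S}$ are admissible). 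Divergence along a subsequence therefore yields divergence of the full limit.

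The main potential obstacle is the positivity of $h_G(\sigma)$; once that is granted, the proposition reduces to a one-line application of iterated harmonicity. Positivity is routine here, but it is the one ingredient not already packaged in Theorem \ref{SigHar} or Corollary \ref{theFamMuHat}, so a careful writeup should either invoke positivity of topological entropy for Anosov maps with dense periodic points or deduce it directly from the exponential growth of periodic chains in the Markov shift.
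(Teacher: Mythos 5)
Your proof follows exactly the same approach as the paper's: both show $\widehat{\mu}_{\underline{S}}(1)=\psi(S_0)$, identify the sum indexed by $\sigma_R^n\underline{S}=\sigma_R^n\underline{R}$ as a sum over admissible words of length $n$ starting at $R_{-n}$, collapse it via iterated harmonicity to $e^{nh_G(\sigma)}\psi(R_{-n})$, and then use recurrence of $\underline{R}$ to fix $\psi(R_{-n_k})=\psi(R_{n_0})>0$ along the subsequence. The computation and the monotonicity observation are correct, and the paper does the identical calculation (writing it as $(L_0^n\psi)(R_{-n})$).

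The one place you add something the paper does not spell out here is the explicit appeal to $h_G(\sigma)>0$. You are right that this is logically needed — if $h_G(\sigma)=0$ the computed value along the subsequence is the constant $\psi(R_{n_0})$, and monotonicity would then force the limit to be finite, contradicting the statement. The paper's own proof simply says \enquote{we are done} after noting $R_{-n_k}=R_{-n_0}$, and only addresses $h>0$ later, in the proof of Theorem \ref{thm:main_thm}, via full support and conformal invariance restricted to a periodic leaf. Your flag of this implicit hypothesis is a legitimate and useful observation, though your sketch of why $h_G(\sigma)>0$ (counting \enquote{exponentially many periodic orbits of $f$}) is a bit informal for a non-compact manifold; a cleaner route is a horseshoe argument at a transverse homoclinic point of a fixed point, which embeds a positive-entropy subshift in $\wh{\Sigma}$, or alternatively the argument the paper itself uses (conformality plus full support on a periodic unstable leaf).
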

\begin{proof}
Notice that for all $\underline{S}\in \wh{\Sigma}$, by definition $\widehat{\mu}_{\underline{S}}(1)=\psi(\underline{S})=\psi(S_0)$, where $\psi$ is the harmonic function defined in Corollary \ref{SigHar}. Then since $\psi$ is harmonic, 
\[\sum_{\sigma_R^n\underline{S}=\sigma_R^n\underline{R}}\widehat{\mu}_{\underline{S}}(1)=\sum_{S\xrightarrow{n}R_{-n}}\psi(S)=(L_0^n\psi)(R_{-n})=e^{nh_G(\sigma)}\psi(R_{-n}),\]
where, as before, $S\xrightarrow[]{n}R_{-n}$ means that $R_{-n}$ is accessible from $S$ in $n$ steps. Then since $R_{-n_k}=R_{-n_0}$ for infinitely many $n_k$ increasing to $\infty$, we are done. 
\end{proof}

\subsection{Proper Global Leaves}

In this subsection we define a family of global unstable leaves with many useful properties with respect to the coding, the Margulis measures, their density in $M$, and such that this family of global unstable leaves covers the residual set $Y$' from Definition \ref{defn:Y}. These properties will later be used to extend the Margulis measures in a unique way  that makes them holonomy invariant across different elements of the Markov partition.

\begin{definition}\label{afterPi}
A chain $\underline{R}\in\widehat{\Sigma}_L$ is called {\em standard} if there exists $ x\in  \widehat{\pi}[W^u_0(\ul{R})]$ such that $W^u(x,R_0)$ is a standard unstable leaf (recall Definition \ref{DefinitionPi}).
\end{definition}

\noindent\textbf{Remark:} The set of chains in $\widehat{\Sigma}_L$ which are not standard is countable and invariant.

In the sequel points $x\in M$ that are uniquely coded will be of particular importance. For this reason, we consider the following subsets that lie above the set of uniquely coded points. Note that if the entire trajectory of a point lies within the interiors of the Markov rectangles then that point is uniquely coded.

\begin{definition}[The canonical part of the coding]\text{ }
\begin{enumerate}
\item $\widehat{\Sigma}^\circ_L\coloneqq \{\underline{R}\in \widehat{\Sigma}_L:\bigcap_{i\geq0}f^i[\mathrm{int}(R_{-i})]\neq\varnothing\}$,
\item $\widehat{\Sigma}^\circ_R\coloneqq \{\underline{R}\in \wh{\Sigma}_R:\bigcap_{i\geq0}f^{-i}[\mathrm{int}(R_i)]\neq\varnothing\}$
\item $\widehat{\Sigma}^\circ\coloneqq \{\underline{R}\in \widehat{\Sigma}:\bigcap_{i\in \mathbb{Z}}f^i[\mathrm{int}(R_{-i})]\neq\varnothing\}$.

\end{enumerate}
\end{definition}

The Markov property guarantees that $\widehat{\Sigma}_L^\circ$ is shift-invariant. In addition, $\widehat{\pi}[\widehat{\Sigma}^\circ]=Y$, since it consists of all the (unique) codings of all points which never meet the boundary of any rectangle. 

The following claim shows that if one concatenates a canonical future with a canonical past, then one obtains a word that canonically codes its image under $\wh{\pi}$.

\begin{claim}\label{LeftExtends}
For any $\underline{R}^- \in \wh{\Sigma}_L^{\circ}$ and any $\underline{R}^+\in \wh{\Sigma}_R^\circ$ with $R^+_0=R^-_0$, the word $\ul{R}^-\ul{R}^+$ is in $\wh{\Sigma}^\circ$, and hence the point coded by this word is in $Y$.
\end{claim}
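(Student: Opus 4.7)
The plan is to build an explicit witness for $\bigcap_{i\in\Z}f^i[\mathrm{int}(R_{-i})]\neq\varnothing$ by taking the Smale bracket of a witness for the canonical past with a witness for the canonical future, and then to propagate the resulting ``interior at time zero'' property to all times via the asymmetric invariances of $\partial^s$ and $\partial^u$ under $f$. (We interpret $\underline{R}^+\in\wh{\Sigma}_R$ in the sense of the preceding remark, i.e.\ as $\underline{R}^+\in\wh{\Sigma}_R^\circ$, so that we are concatenating a canonical future with a canonical past, as described just above the claim.)

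First I would pick $y\in\bigcap_{i\ge 0}f^i[\mathrm{int}(R_{-i})]$ and $z\in\bigcap_{i\ge 0}f^{-i}[\mathrm{int}(R_i)]$, and set $x\coloneqq[y,z]$. Both $y,z\in\mathrm{int}(R_0)$, so in the local product coordinates on $R_0$ the bracket $x$ inherits the unstable coordinate of $z$ and the stable coordinate of $y$, both of which are strictly interior; hence $x\in\mathrm{int}(R_0)$. Iterating the Markov property in the stable direction shows $f^j(x)\in W^s(f^j(z),R_j)\subseteq R_j$ for $j\ge 0$, and in the unstable direction $f^{-i}(x)\in W^u(f^{-i}(y),R_{-i})\subseteq R_{-i}$ for $i\ge 0$, so $x=\wh{\pi}(\underline{R}^-\underline{R}^+)$.

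Then, to finish, I would show $f^j(x)\in\mathrm{int}(R_j)$ for every $j\in\Z$. For $j\ge 0$: $f^j(x)$ and $f^j(z)$ share the stable plaque $W^s(f^j(z),R_j)$, so they share their unstable coordinate in $R_j$; since $f^j(z)\in\mathrm{int}(R_j)$ this coordinate is interior, hence $f^j(x)\notin\partial^s R_j$. For $\partial^u$ one uses backward invariance $f^{-1}(\partial^u\mathcal{R})\subseteq\partial^u\mathcal{R}$: if $f^j(x)\in\partial^u\mathcal{R}$ then iteratively $x\in\partial^u\mathcal{R}$, i.e.\ $x\in\partial^u R$ for some $R\in\mathcal{R}$, and disjoint interiors of proper rectangles force $R=R_0$, contradicting $x\in\mathrm{int}(R_0)$. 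The case $j\le 0$ is symmetric, using that $f^{-i}(x)$ and $f^{-i}(y)$ share an unstable plaque in $R_{-i}$ together with the forward invariance $f(\partial^s\mathcal{R})\subseteq\partial^s\mathcal{R}$. Since every iterate of $x$ is in the interior of its rectangle and distinct proper rectangles have essentially disjoint interiors, $f^j(x)$ lies on no rectangle boundary at any time, so $x\in Y$.

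The hard part is handling the asymmetry between $\partial^s$ and $\partial^u$: only one is forward-invariant and only the other is backward-invariant, so a single witness cannot be expected to certify interior codings in both time directions on its own. The bracket $[y,z]$ is exactly the right choice because it simultaneously installs the correct stable coordinate (from $y$'s canonical past) and the correct unstable coordinate (from $z$'s canonical future) at time zero, and the two asymmetric invariances then push these interior properties in opposite time directions to cover the full bi-infinite orbit.
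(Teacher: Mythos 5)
Your proof is correct and follows essentially the same route as the paper's: take a witness $y$ for the canonical past and a witness $z$ for the canonical future, form the bracket $x=[y,z]$, observe that $x\in\mathrm{int}(R_0)$, and then conclude. You have in fact done a bit more than the paper, whose proof ends with ``the result follows'' after establishing $x\in\mathrm{int}(R_0)$; you supply the propagation to all other times using the two asymmetric invariances $f(\partial^s\mathcal{R})\subseteq\partial^s\mathcal{R}$ and $f^{-1}(\partial^u\mathcal{R})\subseteq\partial^u\mathcal{R}$ together with the Markov property pushing $x$ along the plaques of $y$ and $z$. You also correctly flagged that the hypothesis should be read as $\underline{R}^+\in\widehat{\Sigma}_R^\circ$ rather than $\widehat{\Sigma}_R$ (as the surrounding text ``concatenates a canonical future with a canonical past'' indicates, and as the paper's own proof tacitly assumes when it asserts that the vertical line avoids the boundary); without the interiority assumption on $\underline{R}^+$ the statement is false, e.g.\ when $W^s(\underline{R}^+)$ is a boundary stable plaque of $R_0$.
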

\begin{proof}

Let $\mc{C}_u$ denote the portion of the curve $W^u(\underline{R}^-)$ in the Markov rectangle $R_0$. The Local product coordinates on $R_0$ make $W^u(\ul{R}^-)$ look like a horizontal line in the box $[0,1]\times [0,1]$ while the curve $\mc{C}_s$ corresponding to $W^s(\ul{R}^+)$ looks like a vertical line. Note that the assumptions on $\ul{R}^-$ and $\ul{R}^+$ imply that these vertical and horizontal lines do not coincide with the boundary of the square $[0,1]\times[0,1]$; thus we see that the intersection of $W^u(\ul{R}^-)$ and $W^s(\ul{R}^+)$ is a point in the interior of $R_0$, and the result follows.
\end{proof}

\noindent\textbf{Remark:} It is clear that $\tau[\widehat{\Sigma}^\circ]\subseteq \widehat{\Sigma}_L^\circ$. Claim \ref{LeftExtends} shows that also $\tau[\widehat{\Sigma}^\circ]\supseteq \widehat{\Sigma}_L^\circ$, hence $\widehat{\Sigma}_L^\circ=\tau[\widehat{\Sigma}^\circ]$.

\begin{prop}\label{CountablyBadChains}
The set $\widehat{\Sigma}_L\setminus \widehat{\Sigma}_L^\circ$ is countable.
\end{prop}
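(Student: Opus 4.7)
The plan is to associate to each $\underline{R}\in\widehat{\Sigma}_L\setminus\widehat{\Sigma}_L^\circ$ a full local unstable arc $\gamma$ in $R_0$, show there are countably many such arcs, and then bound the number of past words coding each arc. I would start by identifying, via standard Markov partition theory, the set $\bigcap_{i\ge0}f^i[R_{-i}]$ with the full local unstable arc $\gamma=W^u(z,R_0)$ for any $z$ in this intersection: the Markov property yields $W^u(z,R_0)\subseteq\bigcap_{i\ge0}f^i[R_{-i}]$, while expansiveness forces points sharing a past itinerary to lie on a common local unstable manifold. The hypothesis $\underline{R}\notin\widehat{\Sigma}_L^\circ$ then translates into the covering statement $\gamma\subseteq\bigcup_{i\ge0}f^i[\partial R_{-i}]$.

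Next I would split $\partial R_{-i}=\partial^sR_{-i}\cup\partial^uR_{-i}$ and note that each $\gamma\cap f^i[\partial^sR_{-i}]$ contains at most two points, being the transverse intersection of an unstable arc with a stable-arc image; summed over $i$ these contribute only countably many points. Hence $\gamma$ minus a countable set is covered by the closed sets $\gamma\cap f^i[\partial^uR_{-i}]$, and the Baire category theorem applied to the compact arc $\gamma$ yields some $i_0$ with $f^{i_0}[\partial^uR_{-i_0}]\cap\gamma$ of nonempty interior in $\gamma$. Since two unstable arcs in $M$ that share a sub-arc lie on a common unstable leaf, the component $\beta$ of $f^{i_0}[\partial^uR_{-i_0}]\cap R_0$ containing this interior is itself a sub-arc of $\gamma$, so $\gamma$ inherits its stable coordinate in the local product structure of $R_0$ from $\beta$. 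The stable coordinates of the components of the countable family $\bigcup_{i\ge0,\,R\in\mathcal{R}}f^i[\partial^uR]\cap R_0$ form a countable set, so inside each $R_0$ there are only countably many possible bad arcs; summing over the countable index set $\mathcal{R}$, the set of bad arcs is countable.

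Finally I would bound the number of past words coding a fixed bad arc $\gamma$. If $\underline{R},\underline{R}'\in\widehat{\Sigma}_L$ share the same $R_0$ and both code $\gamma$, I would extend both by a common future $\underline{F}$ admissible from $R_0$; the resulting two-sided words code the same point of $\gamma$ (the intersection of $\gamma$ with the stable arc prescribed by $\underline{F}$) and agree at every non-negative position, so Lemma \ref{lem:coincide_codings} forces $\underline{R}=\underline{R}'$. Because $\gamma$ lies in at most $D_r$ rectangles by Theorem \ref{ourPtn}, each bad arc is coded by at most $D_r$ elements of $\widehat{\Sigma}_L$, and the total count of bad past words is countable. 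The main obstacle will be the Baire step: a priori the arc $\gamma$ could be covered by infinitely many disparate sub-arcs drawn from different $f^i[\partial^uR_{-i}]$ with no single one containing $\gamma$, but Baire rules out such a pathological cover and pins the stable coordinate of $\gamma$ to the countable set of candidates coming from iterates of unstable boundaries.
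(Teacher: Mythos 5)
Your proof is correct, and it takes a genuinely different route from the paper's. The paper first observes (from the remark after Definition~\ref{DefinitionPi}) that there are only countably many nonstandard chains, and then proves the contrapositive inclusion by a Smale bracket argument: given a standard chain $\underline{R}$, one picks $y$ with good stable itinerary, uses the bracket $[x,y]\in Y'$ to produce a point with a canonical two-sided coding in $\tau^{-1}[\{\underline{R}\}]$, and concludes $\underline{R}\in\widehat{\Sigma}_L^\circ$. You instead analyze a non-canonical chain directly: you identify $\bigcap_{i\ge 0}f^i[R_{-i}]$ with the compact arc $\gamma=W^u(z,R_0)$, translate $\underline{R}\notin\widehat{\Sigma}_L^\circ$ into a covering of $\gamma$ by $\bigcup_{i\ge 0}f^i[\partial R_{-i}]$, discard the countably many transverse intersections with iterated stable boundaries, and apply Baire to force one of the closed sets $\gamma\cap f^{i_0}[\partial^u R_{-i_0}]$ to have interior in $\gamma$, thereby pinning $\gamma$ to the countable family of local unstable arcs carried by iterates of unstable boundaries. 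Both proofs ultimately rest on the same bookkeeping—countably many iterated unstable-boundary arcs—but yours replaces the bracket construction with a Baire argument on a single local unstable arc, which is somewhat more self-contained and also implicitly reproves (rather than cites) the remark that nonstandard chains are countable. One minor inefficiency: the $D_r$ bound at the end is unnecessary. Your Lemma~\ref{lem:coincide_codings} argument already shows that the pair $(\gamma,R_0)$ determines the chain uniquely, so each bad arc in a fixed rectangle is coded by at most one element of $\widehat{\Sigma}_L$; the bound of $D_r$ chains per leaf is a harmless overcount. You should also note explicitly that Baire applies because $\gamma$ is a compact arc with no isolated points (so singletons are nowhere dense and the $\gamma\cap f^i[\partial^u R_{-i}]$ are closed), but this is standard in the two-dimensional setting.
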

\begin{proof}

As there are at most countably many nonstandard chains, it suffices to show that if $\ul{R}$ is a standard chain, then $\ul{R}\in \wh{\Sigma}^\circ_L$. As a reminder, a chain $\ul{R}$ is standard if $W^u(\ul{R})$ does not intersect the orbit of the unstable boundaries of the Markov rectangles. Suppose that $\ul{R}$ is such a chain and let $\ul{Q}\in \tau^{-1}[\{\ul{R}\}]$ and $x=\wh{\pi}(\ul{Q})$. Then there is a point $y\in W^s(x,R)$ such that $W^s(y)$ does not meet the orbit of the stable boundaries. Hence we see that $[x,y]$ lies in the set $Y'$ from Definition \ref{defn:Y}. Note that if $\ul{S}$ is a preimage of $y$ under $\wh{\pi}$, then $\wh{\pi}([\ul{Q},\ul{S}])=[\wh{\pi}(\ul{Q}),\wh{\pi}(\ul{S})]$. As $[x,y]\in Y'$, we see that $[\ul{Q},\ul{S}]\in \wh{\Sigma}^{\circ}$ and so $[\ul{Q},\ul{S}]\in \wh{\Sigma}^\circ_L $.
\end{proof}

The set of chains that are most useful for us are those that both are recurrent and code points that always lie in the interior of Markov rectangles. We call such chains \emph{proper}.
\begin{definition}
The set of {\em proper chains} is $\widehat{\Sigma}_L^\star\coloneqq \widehat{\Sigma}_L^\#\cap \widehat{\Sigma}_L^\circ$.
\end{definition}

\begin{remark}
Note that $\widehat{\Sigma}_L^\star$ is shift-invariant, since both $\widehat{\Sigma}_L^\#$ and $\widehat{\Sigma}_L^\circ$ are.
\end{remark}

\begin{lemma}\label{denseStndrdSharp}
The set of proper chains $\widehat{\Sigma}_L^\star$ is dense in $\widehat{\Sigma}_L$.
\end{lemma}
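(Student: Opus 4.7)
The plan is to show that inside every nonempty cylinder $C = [R_{-m}, \ldots, R_0]$ of $\widehat{\Sigma}_L$ there lies a proper chain. I would combine an explicit combinatorial construction of chains visiting a fixed state infinitely often with the counting bound from Proposition \ref{CountablyBadChains}.

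First I would fix a state $a \in \mathcal{R}$. Using transitivity of $(\widehat{\Sigma},\sigma)$, I would find a finite path from $a$ to $R_{-m}$ as well as two loops at $a$ of distinct lengths $\ell_1 \neq \ell_2$. Such loops exist because the density of periodic points in $M$ forces infinitely many periodic orbits of distinct periods in the shift, so the strongly connected graph underlying $(\widehat{\Sigma},\sigma)$ is not a single cycle. Using the ``bridge'' path to attach to $(R_{-m}, \ldots, R_0)$ and then inserting the two loops in arbitrary patterns going backward in time, I produce uncountably many distinct chains in $C \cap \widehat{\Sigma}_L^\#$: different choices of loop patterns differ at some coordinate in the past, hence give genuinely different elements of $\widehat{\Sigma}_L$.

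Next I would invoke Proposition \ref{CountablyBadChains}, which states that $\widehat{\Sigma}_L \setminus \widehat{\Sigma}_L^\circ$ is countable. Subtracting this countable set from the uncountable set $C \cap \widehat{\Sigma}_L^\#$ constructed above leaves an uncountable subset of $C \cap \widehat{\Sigma}_L^\# \cap \widehat{\Sigma}_L^\circ = C \cap \widehat{\Sigma}_L^\star$, which is therefore nonempty. Since $C$ was an arbitrary nonempty cylinder, $\widehat{\Sigma}_L^\star$ is dense.

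The most delicate step is producing the two loops of distinct lengths at $a$ and confirming that the resulting chains really are uncountable. If this combinatorial input turned out to be awkward in an edge case, one could instead fall back on a Baire category version of the same idea: the set $G_a \coloneqq \{\underline{R} \in \widehat{\Sigma}_L : R_n = a \text{ for infinitely many } n \leq 0\}$ is a dense $G_\delta$, hence residual, by the same transitivity construction (now without requiring distinct loop lengths), and $\widehat{\Sigma}_L^\circ$ is residual since its complement is countable and $\widehat{\Sigma}_L$ has no isolated points. Then $\widehat{\Sigma}_L^\star \supseteq G_a \cap \widehat{\Sigma}_L^\circ$ is residual in the Polish (hence Baire) space $\widehat{\Sigma}_L$, and therefore dense.
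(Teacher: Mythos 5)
Your argument is correct and matches the paper's one-line proof in substance: both rely on Proposition~\ref{CountablyBadChains} to show $\widehat{\Sigma}_L\setminus\widehat{\Sigma}_L^\circ$ is countable, and both use that $\widehat{\Sigma}_L^\#$ is uncountable in every nonempty cylinder, so the intersection $\widehat{\Sigma}_L^\star$ is nonempty in every cylinder. You go further than the paper by actually justifying the uncountability, which is a useful addition.

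One caveat in the combinatorial step: having two loops at $a$ of \emph{distinct lengths} is not by itself enough for ``different loop patterns give different chains.'' If $\gamma_2$ is a power of $\gamma_1$ (lengths differ, yet $\gamma_1\gamma_2=\gamma_2\gamma_1$), distinct patterns can produce the same left-infinite word. What you actually want is two \emph{first-return} loops at $a$ (loops not visiting $a$ in their interior), or more generally two loops that are not both powers of a common word; these exist because the strongly connected graph is not a single cycle, as you observe. With that fix the injectivity holds. Your Baire-category fallback sidesteps this entirely since $G_a$ is a dense $G_\delta$ without any freeness requirement, so either route closes the argument.
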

\begin{proof}
By Proposition \ref{CountablyBadChains}, the complement to $\widehat{\Sigma}_L^\circ$ is countable, and $\widehat{\Sigma}_L^\#$ is uncountable in any nonempty cylinder set. Thus $\widehat{\Sigma}_L^\star=\widehat{\Sigma}_L^\#\cap \widehat{\Sigma}_L^\circ$ is dense.
\end{proof}

We will construct the Margulis measures on what we call proper global unstable leaves. These are unstable leaves that contain a point whose coding backwards in time is proper, i.e.~lies in $\wh{\Sigma}_L^{\star}$.

\begin{definition}\label{def:proper_global_leaf}
A global unstable leaf $W^u$ in $M$ is called {\em proper} if there exists a proper chain $\underline{R}\in \widehat{\Sigma}_L^\star$ and $x\in \hat{\pi}(W^u_0(\ul{R}))$ such that $W^u$ contains $x$.
\end{definition}

\subsection{Global Holonomy Invariance of the Margulis Measures}\label{GlobCont}
So far we have constructed a family of measures on proper global leaves, which are dense in the manifold. In order to extend this family of measures coherently to the whole manifold, we need to establish global continuity of these measures under stable holonomy. That is, we need to establish continuity on the manifold for holonomies that do lie within a single Markov rectangle. In order to show this, we will do what amounts to a specific calculation which is enabled by use of the explicit combinatorial formula for the harmonic function from  \textsection \ref{SigHarSubsec}. The important feature of these harmonic functions is that they are defined, in a sense, dynamically because they can be interpreted as counting the number of intersections between stable and unstable manifolds. 
Note that we only prove holonomy invariance for proper leaves. Before we give the proof, we give a brief sketch of the ideas it involves. 

The idea of the proof is as follows. Recall the construction of the harmonic functions on the shift space $\wh{\Sigma}_L$. These were constructed by counting the number of words of length $n$ between two symbols $R_1,R_2\in \mc{R}$. Recall that $Z'_n(R_1,R_2)$ counts precisely this quantity. The measure of a set $U\subset W^u(x,R)$ is then determined by looking at the preimage of this set and covering its preimage with cylinders whose measure is precisely determined by counting words passing through that cylinder.  In order to make proper use of these measures, we must relate this purely combinatorial description with the topological facts of the dynamics in the manifold.  The key observation is that if $W^u(x,R_1)$ is standard then we can estimate the quantity $Z'_n(R_1,R_2)$ by counting the intersections of $W^u(x,R_1)$ with $f^{-n}(W^s(y,R_2))$. If we choose $x$ and $y$ correctly, then $\abs{W^u(x,R_1)}\cap f^{-n}(W^s(y,R_2))$ differ by at most $2$. Hence we can estimate the measure of subsets of $W^u(x,R)$ by first covering them with intervals and then counting their intersections with $f^{-n}(W^s(y,R_2))$. The important part of this is that if $R'$ is a rectangle sitting so that stable leaves in the rectangle $R_1$ exit $R_1$ into $R'$, then if $I_1\subset W^u(x,R_1)$ is an interval and $I_2\subset W^u(x,R')$ are two holonomy related intervals, then the number of intersections between $f^{-n}(W^s(y,R_2))$ and $I_1$ should be almost equal to the number of intersections of $f^{-n}(W^s(y,R_2))$ with $I_2$. Consequently, one expects that the measures $\mu_{\ul{R}}$ to have holonomy invariance even when the holonomies pass between different rectangles. This is only possible because we have constructed the harmonic functions in terms of what is essentially dynamical information.

We remark that \cite{BowenMarcus77} similarly push forward a measure from a symbolic system to a smooth system and then must check that this pushforward has holonomy invariance. A difference between their case and the present one is that the measures constructed in are non-atomic. Hence they are able to freely discard many troublesome points such as the boundaries of rectangles \cite[Lemma~4.3]{BowenMarcus77}. On the other hand, we specifically consider the case that the Margulis measures have an atom that lies on the boundary of a rectangle. This is just one of the complications that appears in applying the strategy of Bowen-Marcus.

\begin{theorem}\label{GlobContTr}
Assume that $\widehat{\Sigma}_L$ is transient, then the Margulis measures on the proper global unstable leaves are holonomy invariant. Specifically, let $R,S\in\mathcal{R}$ be adjacent rectangles such that $L\coloneqq \partial^u R\cap \partial^u S\neq\varnothing$.  Let $\underline{R}\in[R]\cap \widehat{\Sigma}_L^\star,\underline{S}\in[S]\cap \widehat{\Sigma}_L^\star$, and let $x_R\in \widehat{\pi}[W^u_0(\ul{R})]$, $x_S\in \widehat{\pi}[W^u_0(\ul{S})]$. Let $V^s(L)$ be the saturation of local stable leaves in $R$ and $S$ of $L$, and set $I_R\coloneqq V^s(L)\cap W^u(x_R,R)$ and $I_S:=V^s(L)\cap W^u(x_S,S)$. Then $\mu_{W^u(\underline{R})}|_{I_R}=\mu_{W^u(\underline{S})}|_{I_S}\circ \Gamma$, where $\Gamma$ is the local holonomy along the stable foliation restricted to $V^s(L)$. The same holds in the case that $\wh{\Sigma}_L$ is recurrent.
\end{theorem}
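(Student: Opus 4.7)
My plan is to verify the equality $\mu_{W^u(\underline{R})}|_{I_R} = \mu_{W^u(\underline{S})}|_{I_S} \circ \Gamma$ by evaluating both measures on a generating family of cylinder sets and matching them using the explicit formula for the harmonic function $\psi$ from Theorem~\ref{SigHar}. The starting point is a cylinder mass formula: iterating Corollary~\ref{theFamMuHat}(2) $n$ times gives, for every admissible extension $R_0 \to Z_1 \to \cdots \to Z_n$,
\[
\widehat{\mu}_{\underline{R}}([Z_1,\ldots,Z_n]) = e^{-n h_G(\sigma)}\psi(Z_n),
\]
and pushing forward by $\widehat{\pi}$ assigns the same mass to the corresponding sub-arc of $W^u(x_R, R_0)$. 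Summing over all admissible extensions whose images fall inside $I_R$ and taking $n \to \infty$ recovers $\mu_{W^u(\underline{R})}|_{I_R}$ via Corollary~\ref{GlobMeasures}.

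The key geometric observation is that $\Gamma$ asymptotically preserves the future coding. If $y \in I_R \cap Y'$ has forward itinerary $(R_0, Z_1, Z_2, \ldots)$, then $\Gamma(y) \in I_S \cap Y'$ has forward itinerary $(S_0, W_1, W_2, \ldots)$ with $W_i = Z_i$ for every $i \ge n_0(y)$. This is because $f^i(y)$ and $f^i(\Gamma(y))$ share a local stable manifold and so converge exponentially; eventually both lie in the interior of a common Markov rectangle, which forces the coding. Hence for $n$ sufficiently large (depending on the compact subset we work in), the sub-arc of $I_R$ on which $f^n(\cdot) \in Z_n$ is mapped by $\Gamma$ exactly onto the sub-arc of $I_S$ on which $f^n(\cdot) \in Z_n$, up to two endpoint points where $V^s(L)$ meets its boundary.

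It then follows from the cylinder formula that corresponding sub-arcs carry equal mass $e^{-nh_G(\sigma)}\psi(Z_n)$, so the two measures agree on a $\pi$-system generating the Borel $\sigma$-algebra of $I_R$ and must coincide. The same argument runs in parallel in the transient and recurrent settings: in both cases $\psi$ depends only on the starting symbol, and the cylinder formula uses $\psi$ as a black box, with no reference to whether the construction follows Sarig or Cyr.

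The main obstacle I expect is handling the synchronization time $n_0(y)$, which can blow up as $y$ approaches the endpoints of $I_R \cap L$. The remedy is to decompose $I_R$ into an increasing exhaustion by compact subsets on which $n_0$ is uniformly bounded, and apply inner regularity using Claim~\ref{finiteOnArcs}. A second technical issue is that points with multiple codings live in a meagre set whose Margulis measure is not obviously zero; here one uses Proposition~\ref{bdd21} to bound coding multiplicity together with the density of proper chains from Lemma~\ref{denseStndrdSharp}, so that the measure-theoretic bookkeeping can be carried out on the residual set $Y'$.
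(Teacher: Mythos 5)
Your plan to decompose $I_R$ into cylinder arcs, match them under $\Gamma$, and read off equal masses $e^{-nh_G(\sigma)}\psi(Z_n)$ has a genuine gap in the matching step, and this gap is exactly what drives the paper's longer argument.

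The problematic assertion is that for $n$ large, the sub-arc of $I_R$ where $f^n(\cdot)\in Z_n$ is mapped by $\Gamma$ exactly onto the corresponding sub-arc of $I_S$. The sub-arc in question is a union of length-$n$ cylinder images $\widehat{\pi}([Z_1,\ldots,Z_n])$, whose boundary is (roughly) $f^{-i}(\partial^u\text{(rectangles)})$ for $i\le n$. There is no mechanism forcing $\Gamma$ to carry these boundary points to the boundary points of length-$n$ cylinders for $\underline{S}$: the synchronization time $n_0(y)$ varies across a given cylinder arc $A$, and two points $y,y'\in A$ may have $\Gamma(y)$ and $\Gamma(y')$ with different prefixes $(S_0,W_1,\ldots)$ up to time $n$, hence in different length-$n$ cylinders. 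Restricting to a compact $K$ on which $n_0\le N_0$ controls each individual point, but the cylinder arcs of length $n$ are themselves shrinking and proliferating as $n\to\infty$; bounding $n_0$ pointwise does not bound the measure of the region where the two cylinder decompositions fail to align, and you have not shown that this defect region has measure tending to zero. In fact the hardest case, which the paper isolates explicitly, is when $\mu_{W^u(\underline{R})}$ has an \emph{atom} on the common boundary of two rectangles: for such an atom the infimum over cylinder covers does not stabilize, and matching cylinders term by term gives no control there.

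Relatedly, you claim the argument is agnostic to the explicit form of $\psi$, using only that $\psi$ is constant on partition sets. The paper's proof is built precisely to \emph{avoid} having to match cylinder arcs with one another. Instead it uses the combinatorial formulas of Theorem~\ref{SigHar} (Cyr/Sarig) to rewrite $\widehat{\mu}_{\underline{R}}([\underline{c}])$ as a limit of ratios of intersection counts $\abs{\widehat{\pi}(C)\cap f^{-i}(W^s(x_k,\omega_k))}$ (Claims~\ref{claim:intersections_count_words} and~\ref{claim:cylinder_measure}), and then observes (Claim~\ref{claim:holonomy_covering}) that two holonomy-related sets have intersection counts differing by at most $2$, with even this error vanishing once the reference stable leaf $W^s(x_k,\omega_k)$ escapes the region of interest. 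That ``$\pm 2$'' bound is the quantitative input replacing your matching claim, and it is only available because $\psi$ was \emph{constructed} as a limit of word counts, not merely as some eigenfunction of $L_0$. This is why the paper stresses that the harmonic function must be built dynamically. A black-box $\psi$ gives you the cylinder mass formula, but not the intersection-count interpretation that makes the holonomy comparison go through.

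To repair the argument you would need to replace the exact cylinder-matching claim with a quantitative comparison of the two cylinder covers and show the symmetric difference has measure $o(1)$ uniformly in $n$, including at boundary atoms; at that point you would essentially be reinventing the paper's Claims~\ref{claim:cylinder_measure} and~\ref{claim:holonomy_covering}, which require the explicit Cyr/Sarig formulas after all.
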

\begin{proof}
The proof of the theorem is long and hence is broken into several steps with several intermediate claims. 

We begin by recalling the construction of the harmonic function $\psi$ in Theorem \ref{SigHar} for the transient case. We fixed a chain $\ul{\omega}\in \wh{\Sigma}$ such that for any $i>j\ge 0$ $\omega_i\neq \omega_j$ as well as a fixed point $a\in M$ lying in the interior of a rectangle and let $\ul{a}$ denote the coding of $a$'s past, which we write $(\ldots,a,a,a)\in \wh{\Sigma}_L$.

We now construct the sequence of stable manifolds we will use to count the intersections with local unstable leaves. Fix $k\ge 0$, fix a periodic point $x_k\in \mathrm{int}(\omega_{n_k})$ whose orbit only visits the interiors of rectangles, and denote the period of $x_k$ by $\ell_k$.

\begin{claim}\label{claim:uniquely_coded}
If $x\in W^u(x_R,R)\cap W^s(x_k)$, then $x$ has a coding in $W^u_0(\ul{R})$ and this coding is unique.
\end{claim}
\begin{proof}
We begin by showing that a coding exists. To begin, take a sequence of points $y_n\in Y'$ such that $y_n\to x$. Let $\ul{R}^{(n)}$ denote the coding of these points, which lies in $\wh{\Sigma}^{\circ}$. Then consider the points $W_n=(R_i)_{i\le 0}\cdot (R_i^{(n)})_{i\ge 0}$, which lie in $\wh{\Sigma}^{\circ}$ by Claim \ref{LeftExtends}. By the local compactness of $\wh{\Sigma}$, some subsequence converges to a point $W\in W^u_0(\ul{R})$. Note then that this point codes $x$ by continuity as each point $W_n$ codes the point $[x_R,y_n]\in W^u(x_R,R)$.

We now show that this coding is unique. Let $\ul{u}^1$ and $\ul{u}^2$ be two codings in $W^u_0(\ul{R})$ of a point $x\in W^u(x_R,R)\cap W^s(x_k)$. Note that sufficiently far in the future $\ul{u}^1$ and $\ul{u}^2$ are equal as they approach $x_k$, whose orbit always lies in the interior of Markov rectangles. As these codings both lie in $W^u_0(\ul{R})$, the agree in the past as well. Hence by Lemma \ref{lem:coincide_codings} $\ul{u}^1=\ul{u}^2$.
\end{proof}

We will now show the holonomy invariance of $\mu_{W^u(\ul{R})}$. For simplicity, we will consider in detail only the most difficult case: when $\mu_{W^u(\ul{R})}$ has an atom that lies on the common stable boundary of $R$ and another rectangle $T$ that also abuts $S$. Note that we are specializing to a specific configuration of rectangles.
This argument contains all the difficulties that occur in the remaining cases. We begin by finding a formula for a measure of a cylinder in terms of counting intersections of a stable and unstable manifold.

Previously, when we defined the harmonic function, we had the notation $Z_i'(R_1,R_2)$ that counted the number of paths between $R_1$ and $R_2$ of length $i$. Now for a cylinder $C=[C_1,\ldots,C_N]$ and $\omega_{n_k}\in \mc{R}$, we extend the definition of $Z_i'$ so that $Z_i'(C,\omega_{n_k})$ counts the number of paths from $C_1$ to $\omega_{n_k}$ of length $i$ of the form $C_1,C_2,\ldots,C_N,\ldots,\omega_{n_k}$. Note that as long as $i$ is sufficiently large this is equal to the number of paths from $C_N$ to $\omega_{n_k}$ of length $i-N+1$. 

\begin{claim}\label{claim:intersections_count_words}
Suppose that $\ul{R}\in \wh{\Sigma}_L$ and that that $C$ is a cylinder contained in $W^u_0(\ul{R})$. Then 
\[
\abs{Z_i'(C,\omega_{n_k})}=\abs{\wh{\pi}(C)\cap f^{-i}(W^s(x_k,\omega_{n_k}))}.
\]
\end{claim}
\begin{proof}
Each point $x$ in $\wh{\pi}(C)\cap f^{-i}(W^s(x_k,\omega_{n_k}))$ is uniquely coded by Claim \ref{claim:uniquely_coded}. Further, the map that carries $x$ to its coding in $W^u_0(\ul{R})$ is a surjection. This follows because by the Markov property any word $w\in Z_i(C,\omega_{n_k})$ may be extended to a two sided word $\ul{w}\in \wh{\Sigma}$ lying in the cylinder $C$ such that $\ul{w}$'s $n_k$th iterate lies in the partition set $[\omega_{n_k}]$ in the local stable manifold of the periodic point $x_k$. Hence the image of this point lies in $\wh{\pi}(C)\cap f^{-i}(W^s(x_k,\omega_{n_k}))$.
\end{proof}

An important and useful property that we will use is that the image of a cylinder $C\subset W^u_0(\ul{S})$ is a closed interval. 
\begin{claim}\label{claim:cylinder_measure}
We have the following formula for the measure of a cylinder $C$,
\[
\wh{\mu}_{\ul{S}}(C)=\lim_{k\to \infty} \frac{\sum_{i\ge 0} e^{-ih_G(\sigma)}\abs{\wh{\pi}(C)\cap f^{-i}(W^s(x_k,\omega_k))}}{\sum_{i\ge 0} e^{-ih_G(\sigma)}\abs{\widehat{\pi}(W^u_0(\ul{a}))\cap f^{-i}(W^s(x_k,\omega_{k}))}}.
\]
\end{claim}

\begin{proof}
We first will obtain a combinatorial formula for the measure of a cylinder set. 

By (2) in Corollary \ref{theFamMuHat}, which gives for a cylinder $[C_0]$ of length $1$ that:
\[
\wh{\mu}_{\ul{S}}([C_0])=\wh{\mu}_{\ul{S}}(1_{[C_0
]}\circ \sigma)=e^{-h_G(\sigma)}\sum_{\sigma_R\ul{S}=\ul{R}}\wh{\mu}_{\ul{SC_0}}(1_{[C_{0}]}\circ \sigma)=e^{-h_G(\sigma)}\wh{\mu}_{\ul{SC_0}}(1),
\]
where the last equality follows because only one term in the sum assigns any measure to the integrand.
Hence for a cylinder $C$ of length $N$, 
\[
\wh{\mu}_{\ul{S}}(C)=e^{-Nh_G(\sigma)}\wh{\mu}_{\ul{SC}}(1).
\]
Then, by definition, as the chain is transient we can rewrite this as
\begin{align*}
\wh{\mu}_{\ul{S}}(C)&=e^{-Nh_{G}(\sigma)}\hat{\mu}_{\ul{SC}}(1)\\
&=e^{-Nh_G(\sigma)}\lim_k \frac{\sum_{i\ge 0}e^{-ih_G(\sigma)}Z_i'(C_N,\omega_{n_k})}{\sum_{i\ge 0} e^{-ih_G(\sigma)}Z_i
'(\ul{a},\omega_{n_k})}\\
&=e^{-Nh_G(\sigma)}\lim_k \frac{\sum_{i\ge 0} e^{-ih_G(\sigma)}Z_{i+N}'(C,\omega_{n_k})}{\sum_{i\ge 0}e^{-ih_G(\sigma)}Z_i'(\ul{a},\omega_{n_k})}\\
&=\lim_k \frac{O(1)+\sum_{i\ge 0} e^{-ih_G(\sigma)}Z_{i}'(C,\omega_{n_k})}{\sum_{i\ge 0}e^{-ih_G(\sigma)}Z_i'(\ul{a},\omega_{n_k})},\\
\end{align*}
where the $O(1)$ is uniformly bounded because as $k\to \infty$, arbitrarily many of the finite number of terms that were gained when the sum was reindexed are zero as $\omega_{n_k}$ moves away from $\wh{\pi}(C)$ in the manifold. We now apply Claim \ref{claim:intersections_count_words} to conclude.
\end{proof}

For a rectangle $R$ and a set $U$ contained in an unstable leaf passing through $R$, we consider coverings of these sets by the images of cylinders.
\[
\mc{C}^N_R(U)=\{\text{all cylinders } C \text{ in } W^u_0(\ul{R}) \text{ of length } N \text{ such that } C\cap \wh{\pi}^{-1}[\{x\}]\neq \emptyset\}.
\]

In addition, we define the set
\[
\mc{D}^R_N(U)=\bigcup_{C\in \mc{C}^N_R(U)} \wh{\pi}(C).
\]

The following important claim shows that we can relate these coverings between different sets:
\begin{claim}\label{claim:holonomy_covering}
Suppose that $U$ and $\Gamma(U)$ are two holonomy related sets. Suppose, in addition that $S$ lies in two rectangles $R$ and $T$ and that $\Gamma(S)$ lies in a rectangle $S$.

\begin{enumerate}
\item
Then there exists $N'\ge N$ such that 
\[
\mc{D}_{N'}^S(\Gamma(U))\subseteq \Gamma(\mc{D}_N^S(U)\cup \mc{D}_N^T(U)).
\]
\item
For any segment $I$ contained in a stable leaf:
\[
\abs{\abs{U\cap f^{-n}(I)}-\abs{\Gamma(U)\cap f^{-n}(I)}}\le 2.
\]
\end{enumerate}
\end{claim}
\begin{proof}

The first claim follows because the image of a cylinder is a closed interval contained in an unstable leaf. Consequently, as the radius of these cylinders goes to $0$ as $N\to \infty$, we see that one covering is always contained in another sufficiently finer covering. The second claim is immediate because the sets $U$ and $\Gamma(U)$ are holonomy related by a local holonomy.
\end{proof}

\begin{claim}
Suppose that $x$ lies in the common stable boundary of $R$ and $T$. Then $\mu_{W^u(\ul{R})}(\{x\})=\mu_{W^u(\ul{S})}(\{\Gamma(x)\})$, where $\Gamma$ is the local stable holonomy between $I_R$ and $I_S$.
\end{claim}

\begin{proof}
Let us begin by writing the definition of the measure of $x$. By construction of $\mu_{W^u(\ul{R})}$, 
\[
\mu_{W^u(\ul{R})}(\{x\})=\mu_{\ul{R}}(\{x\})+\mu_{\ul{T}}(\{x\}).
\]
We have the following description of these measures:
\begin{align*}
\mu_{\underline{R}}(\{x\})=&\inf_N \sum_{C\in \mc{C}_N^R(\{x\})} \wh{\mu}_{\ul{R}}(C).
\end{align*}

For each $N$ that 
\begin{align*}
\mu_{\ul{S}}(\{\Gamma(x)\})&\le \sum_{C\in \mc{C}_N^S(\{x\})} \wh{\mu}_{\ul{S}}(C)\\
&=\sum_{C\in \mc{C}_N^S(\{\Gamma(x)\})}\wh{\mu}_{\ul{S}}(C)\\
&=\sum_{C\in \mc{C}_N^S(\{\Gamma(x)\})}\lim_{k\to \infty} \frac{\sum_{i\ge 0} e^{-ih_G(\sigma)}\abs{\wh{\pi}(C)\cap f^{-i}(W^s(x_k,\omega_k))}}{\sum_{i\ge 0} e^{-ih_G(\sigma)}\abs{\wh{\pi}(W^u_0(\ul{a}))\cap f^{-i}(W^s(x_k,\omega_{k}))}}.
\end{align*}
Hence we see that 
\[
\mu_{\ul{S}}(\{\Gamma(x)\})\le \lim_{k\to \infty} \frac{\sum_{i\ge 0} e^{-ih_G(\sigma)}\abs{\wh{\pi}(\mc{D}_N^S)\cap f^{-i}(W^s(x_k,\omega_k))}}{\sum_{i\ge 0} e^{-ih_G(\sigma)}\abs{\wh{\pi}(W^u_0(\ul{a}))\cap f^{-i}(W^s(x_k,\omega_{k}))}}.
\]

Now, note from the first part of Claim \ref{claim:holonomy_covering} for each $N$ there exists $N'$ such that 

\[
\mc{D}_{N'}^S(\{\Gamma(x)\})\subseteq \Gamma(\mc{D}_N^S(\{x\})\cup \mc{D}_N^T(\{x\})).
\]
By the second part of Claim \ref{claim:holonomy_covering} we then see that 
\[
\mu_{\ul{S}}(\{\Gamma(x)\})\le \lim_{k\to \infty} \frac{\sum_{i\ge 0} e^{-ih_G(\sigma)}(\abs{\wh{\pi}(\mc{D}_N^R\cup \mc{D}_N^T)\cap f^{-i}(W^s(x_k,\omega_k))}\pm 2)}{\sum_{i\ge 0} e^{-ih_G(\sigma)}\abs{\wh{\pi}(W^u_0(\ul{a}))\cap f^{-i}(W^s(x_k,\omega_{k}))}}.
\]
Note, however, that for $k$ sufficiently large that $f^{-n}(x_k,\omega_k)$ intersects none of the sets $\mc{D}_N^R,\mc{D}_N^S$ or $\mc{D}_N^T$. Thus we see in fact this stronger estimate:
\begin{equation}\label{eqn:limits_agree2}
\mu_{\ul{S}}(\{\Gamma(x)\})\le \lim_{k\to \infty} \frac{\sum_{i\ge 0} e^{-ih_G(\sigma)}(\abs{\wh{\pi}(\mc{D}_N^R\cup \mc{D}_N^T)\cap f^{-i}(W^s(x_k,\omega_k))})}{\sum_{i\ge 0} e^{-ih_G(\sigma)}\abs{\widehat{\pi}(W^u_0(\ul{a}))\cap f^{-i}(W^s(x_k,\omega_{k}))}}.
\end{equation}
Reversing the above reasoning,  we identify the limit on the right hand side of this equation as calculating the $N$th term in the infimum defining
\[
\mu_{\ul{R}}(\{x\})+\mu_{\ul{T}}(\{x\})=\inf_N \sum_{C\in \mc{C}_N^R(\{x\})} \wh{\mu}_{\ul{R}}(C)+\sum_{C\in \mc{C}_N^R(\{x\})} \wh{\mu}_{\ul{R}}(C).
\]
And hence we see that 
\[
\mu_{\ul{S}}(\{\Gamma(x)\})\le \mu_{\ul{R}}(\{x\})+\mu_{\ul{T}}(\{x\}).
\]
The reverse inequality follows along exactly the same lines and so the claim holds.

\end{proof}

The proof for the remaining cases of the relative positions of the Markov rectangles is identical, but the notation is more or less complicated depending on the particular configuration of the rectangles. One takes a cover of the set being measured by the images of cylinders and then estimates the measures of the cylinders by looking at intersections exactly as before.

The same conclusion holds if the dynamics is recurrent. The argument is conceptually identical and uses similar logic to deduce \eqref{eqn:limits_agree2}. The only difference is that we begin from a slightly different definition of the measure because the harmonic function we use for the recurrent case is different:
\[
\psi(R)=\lim_{k\to \infty} \frac{\sum_{i\le n_k} e^{-ih_G(\sigma)}Z_i'(R,a_0)}{\sum_{i\le n_k} e^{-ih_G(\sigma)}Z_i'(a_0,a_0)}.
\]
For example, using a fixed word $\underline{a}$ that lies in the interior of a rectangle with zeroth symbol $a_0$, gives that the analog of \eqref{eqn:limits_agree2} in the recurrent case would be
\[
\mu_{\ul{S}}(\{\Gamma(x)\})\le \lim_{k\to \infty} \frac{\sum_{i\le n_k} e^{-ih_G(\sigma)}\abs{\wh{\pi}(\mc{D}_N^R\cup \mc{D}_N^T)\cap f^{-i}(W^s(\ul{a},a_0))}}{\sum_{i\le n_k} e^{-ih_G(\sigma)}\abs{\wh{\pi}(W^u(\ul{a}))\cap f^{-i}(W^s(\ul{a},a_0))}}.
\]
\end{proof}

\medskip
Recall that in Corollary \ref{GlobMeasures}, we defined measures on each global leaf. However, for the non-proper leaves we will now redefine the Margulis measures by using local holonomies. The above theorem asserts that this can be done in a consistent way due to the continuity and consistency of the holonomies.

\begin{cor}\label{cor:margulis_measures_locally_holonomy_invariant}
The Margulis measures on the proper leaves extend uniquely and continuously to every global leaf $W^u$ by the following formula giving a family $\{\mu_{W^u}\}$, which we call the \emph{Margulis measures} without qualification: Given a finite arc $I\subseteq W^u$,
\[
\mu_{W^u}(I):= \lim_{\underline{R}\in\widehat{\Sigma}_L^\star: W^u(\underline{R})\to W^u}\mu_{W^u(\underline{R})}(\Gamma_{\underline{R}}[I]),\]
where $\Gamma$ is the local holonomy on stable leaves between $W^u$ and $W^u(\underline{R})$.

Further, the property $\mu_{W^u(x)}\circ f^{-1}=e^{-h_G(\sigma)} \mu_{W^u(f(x))}$ holds for the Margulis measures, and they are invariant under local stable holonomies.
\end{cor}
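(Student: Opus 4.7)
The plan is to extend the Margulis measures from proper global leaves to all global leaves by holonomy, then to verify the remaining properties by passing to the limit in the defining formula. The crux is that the formula is well-defined and continuous; once we have that, conformality and holonomy invariance are essentially built in.

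First I would fix a global unstable leaf $W^u$ and a finite arc $I \subseteq W^u$. I want to approximate $W^u$ by proper leaves $W^u(\underline{R})$ with $\underline{R} \in \widehat{\Sigma}_L^\star$. By Lemma \ref{denseStndrdSharp}, the proper chains are dense in $\widehat{\Sigma}_L$, and through any point $x \in W^u$ lying in a rectangle $R$ we can find a sequence of proper chains $\underline{R}^{(n)} \in [R] \cap \widehat{\Sigma}_L^\star$ such that the associated local unstable leaves $W^u(x_n, R)$ accumulate on $W^u(x, R)$ in the local product box. The local stable holonomies $\Gamma_{\underline{R}^{(n)}}$ from $W^u$ to $W^u(\underline{R}^{(n)})$ are well-defined on a neighborhood of $I$ once $n$ is large enough, and $\Gamma_{\underline{R}^{(n)}}[I]$ converges to $I$ in the Hausdorff sense.

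Next I would argue that the limit exists. For two sufficiently nearby proper chains $\underline{R}^{(n)}$ and $\underline{R}^{(m)}$, both contained in the same rectangle $R$, the measures $\mu_{W^u(\underline{R}^{(n)})}$ and $\mu_{W^u(\underline{R}^{(m)})}$ restricted to the arcs obtained by holonomy from $I$ are related by the local stable holonomy within $R$. Theorem \ref{GlobContTr} guarantees that these holonomies are measure-preserving even across adjacent rectangles in the decomposition, so in particular within $R$ they preserve the measures $\mu_{\underline{R}^{(n)}}$ (of which the global leaf measures are limits via Corollary \ref{GlobMeasures}). Hence the sequence $\mu_{W^u(\underline{R}^{(n)})}(\Gamma_{\underline{R}^{(n)}}[I])$ is eventually constant on each finite arc, provided the arc is covered by finitely many of the local $\mu_{\underline{R}}$-contributions (Claim \ref{finiteOnArcs}). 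This yields a well-defined limit independent of the approximating sequence, and the formula defines a Borel measure on $W^u$ by standard extension once we verify finite additivity, which follows from the analogous property of $\mu_{W^u(\underline{R})}$.

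To verify holonomy invariance in general: given two global leaves $W^u_1, W^u_2$ joined by a local stable holonomy $\Gamma$ over a rectangle, approximate both by proper leaves and apply Theorem \ref{GlobContTr} together with the commutativity of holonomies within local product boxes; the limiting values agree. For conformal invariance, note that $f$ carries a proper chain $\underline{R}$ to a proper chain $\sigma_R^{-1}\underline{R}$ (well, more precisely, shifts the coding so that the image leaf is proper with respect to a shifted proper chain), and on proper global leaves the identity $\mu_{W^u(\underline{R})} \circ f^{-1} = e^{-h_G(\sigma)} \mu_{W^u(f(\underline{R}))}$ is a direct consequence of item (2) of Theorem \ref{propsOfMargulis} summed over the preimages as in Corollary \ref{GlobMeasures}. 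Taking the holonomy limit preserves this relation since $f$ commutes with stable holonomies up to conjugation by $f$.

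The main obstacle is the well-definedness of the limit, specifically ensuring that different sequences of proper chains approximating the same global leaf yield the same value on each arc. This reduces to showing that whenever two proper chains produce local unstable leaves close enough to agree on a neighborhood of the holonomy image of $I$, their Margulis measures coincide on that image. Within a single rectangle, this is immediate from item (3) of Theorem \ref{propsOfMargulis} (local holonomy invariance). The subtlety arises when the approximating leaves lie in different rectangles of the partition containing $W^u$, and here Theorem \ref{GlobContTr} is precisely the tool needed: the global leaf measures agree under stable holonomy across adjacent rectangles for proper chains. Once this is established, the uniqueness of the extension and its continuity properties follow formally.
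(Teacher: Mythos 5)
Your proposal is correct and follows the same route the paper takes: the paper states Corollary \ref{cor:margulis_measures_locally_holonomy_invariant} as a direct consequence of Theorem \ref{GlobContTr} (holonomy invariance on proper leaves) together with the density of $\widehat{\Sigma}_L^\star$ from Lemma \ref{denseStndrdSharp}, exactly the ingredients you assemble. Your observation that the net $\mu_{W^u(\underline{R})}(\Gamma_{\underline{R}}[I])$ is eventually constant (so the defining limit is well-posed) is precisely the consistency Theorem \ref{GlobContTr} is designed to provide, and your derivation of conformality by applying item (2) of Theorem \ref{propsOfMargulis} to the increasing sum from Corollary \ref{GlobMeasures} and then reindexing matches what the paper implicitly uses.
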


The Margulis measures give infinite volume to leaves through periodic points.

\begin{lemma}\label{halfArcInf}
Let $p$ be a periodic point. Fix an orientation on $W^u(p)$ so that we may unambigiously write $W^u(p)$ as the union of two rays $I^+\cup I^-$ that intersect at the point $p$. Then 
\[
\mu_{W^u(p)}(I^+)=\mu_{W^u(p)}(I^{-})=\infty.
\]
\end{lemma}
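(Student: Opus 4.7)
The plan is to combine the conformal invariance of the Margulis measures from Corollary \ref{cor:margulis_measures_locally_holonomy_invariant} with the uniform expansion of $f$ on $E^u$ to push a fixed arc of positive measure out along $I^+$, making its measure arbitrarily large. First I would choose $n \ge 1$ so that $f^n(p) = p$ and $Df^n|_{E^u_p}$ preserves the chosen orientation of $W^u(p)$; doubling the period of $p$ if necessary makes this possible. With this choice, $f^n$ fixes $p$ and maps each of the rays $I^+$ and $I^-$ into itself.

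Next, fix a small closed sub-arc $J = [p,q] \subset I^+$ with $q \ne p$. By full support of the Margulis measure on $W^u(p)$, we have $\mu_{W^u(p)}(J) > 0$. Because $f^{-n}$ preserves $I^+$ and is a uniform contraction on $E^u$, for $q$ sufficiently close to $p$ along the leaf one has $f^{-n}(q) \in [p,q] = J$, hence $f^{-n}(J) \subset J$, equivalently $J \subset f^n(J)$. Iterating gives a nested increasing sequence
\[
J \subset f^n(J) \subset f^{2n}(J) \subset \cdots \subset I^+.
\]

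Now rewrite the conformal invariance $\mu_{W^u(x)} \circ f^{-1} = e^{-h_G(\sigma)} \mu_{W^u(f(x))}$ in direct form as $\mu_{W^u(f(x))}(f(B)) = e^{h_G(\sigma)} \mu_{W^u(x)}(B)$ for Borel $B$, and iterate using $f^{kn}(p) = p$ to obtain
\[
\mu_{W^u(p)}(f^{kn}(J)) = e^{knh_G(\sigma)} \mu_{W^u(p)}(J).
\]
Since $h_G(\sigma) > 0$ and $\mu_{W^u(p)}(J) > 0$, the right side tends to $\infty$ as $k \to \infty$; combined with $f^{kn}(J) \subset I^+$ this gives $\mu_{W^u(p)}(I^+) = \infty$. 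The identical argument, with $I^-$ in place of $I^+$, yields $\mu_{W^u(p)}(I^-) = \infty$; the same $n$ works because we arranged for $f^n$ to preserve orientation on $W^u(p)$.

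The only point requiring real care is the containment $J \subset f^n(J)$: one needs $J$ short enough that the uniform expansion estimate for $Df^n$ on $E^u$ translates into an honest inequality of arc lengths in the intrinsic leaf metric, and one needs $f^{-n}(q)$ to land on the correct side of $p$. Both use completeness of $W^u(p)$ and the uniform Anosov hypothesis; after that, the conclusion is a straightforward application of conformal invariance and full support.
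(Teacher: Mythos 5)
Your proof is correct, but it takes a genuinely different route from the paper's. The paper's argument is more combinatorial and tied to the explicit construction: it finds a periodic point $q$ in the interior of some rectangle $R$ whose local stable leaf $W^s(q,R)$ meets $I^+$, observes that iterating the dynamics forces $I^+$ to pass through $R$ infinitely many times, and then uses the fact (coming from the harmonic function $\psi$ and the way the Margulis measures are built from cylinder measures) that each such passage carries mass at least $\psi(R)>0$. That argument never invokes the conformal scaling $\mu\circ f^{-1}=e^{-h}\mu$ and never needs to know $h>0$. Your argument instead uses only the two abstract properties of the finished measure---conformal invariance and full support---to blow up the measure of a fixed arc under $f^{kn}$. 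This is more self-contained at the level of the measure's axioms and less tied to the rectangle-counting combinatorics, but it does rely on one fact you assert without proof, namely $h_G(\sigma)>0$. That inequality is established in the paper only afterward, in the proof of Theorem \ref{thm:main_thm}, and is not logically available at the point Lemma \ref{halfArcInf} is stated. The gap is small and easy to close with the same tools you are already using: if $J$ is a nondegenerate closed arc with $\mu_{W^u(p)}(J)\in(0,\infty)$ and $f^n(J)\supsetneq J$, then $f^n(J)\setminus J$ has nonempty interior, so full support gives $\mu(f^n(J))>\mu(J)$, and conformal invariance gives $e^{nh}\mu(J)>\mu(J)$, hence $h>0$; but you should state this explicitly rather than treat it as known. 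Once that is inserted, the rest of your argument (nesting $J\subset f^n(J)\subset f^{2n}(J)\subset\cdots\subset I^+$ and sending the measure to infinity) is valid, and the orientation-preservation bookkeeping you flag is handled correctly.
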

\begin{proof}
Without loss of generality, we suppose that $p$ is a fixed point. Then as the stable leaves of periodic points of $f$ are dense, there exists a periodic point $q$ in the interior of some rectangle $R$ such that $W^s(q,R)$ intersects $I^+$. Note that some power of $f$ carries $I^+$ and $I^-$ to themselves. 
Hence by iterating the dynamics, one sees that $W^u(p)$ and $I^+$ intersect infinitely many times in the rectangle $R$. 
Hence $I^+$ passes through $R$ infinitely many times and by construction of the Margulis measures each of these intersections is given mass at least $\psi(R)$ by $\mu_{W^u(p)}$. 
Hence $I^+$ has infinite measure. The same considerations apply for $I^-$.
\end{proof}

We can now conclude the proof of Theorem \ref{thm:main_thm}.

\begin{proof}[Proof of Theorem \ref{thm:main_thm}]
We have essentially finished the proof of this theorem as the measure we constructed in this section has been shown to satisfy properties (1), (2), (3), (4) required by that theorem. The final needed claim is that $h>0$. But due to the full support of the Margulis measures and the conformal invariance, that $h>0$ follows by considering the restriction of the measure to a periodic leaf in a neighborhood a periodic point.
\end{proof}

\section{Application to Anosov Diffeomorphisms on Open Surfaces}\label{sec:end_of_proof}
In this section, we give an application of the construction of the Margulis measures to Anosov diffeomorphisms on open surfaces under some assumptions. In the first subsection, we show that the Margulis measures are compatible with global product structure and give a simple description of the universal cover of $M$ when $f$ has global product structure.

\subsection{Margulis Measures and Global Product Structure}

In this section, we show that the Margulis measures give a particularly nice description of the universal cover of $M$ when it has Global product structure. 

First we recall the definition of global product structure for an Anosov diffeomorphism $f\colon M\to M$. If we let $\wt{W}^u$ and $\wt{W}^s$ denote the stable and unstable foliations of the universal cover of $M$, then we say that $f$ has \emph{global product structure} if for any $x,y\in M$, $\abs{\wt{W}^u(x)\cap \wt{W}^s(y)}=1$, i.e.~every pair of stable and unstable leaves meets in exactly one point. In the case of a surface $M$, this implies that the foliations on $\wt{M}\approx \R^2$ look like the foliations of $\R^2$ by vertical and horizontal lines.

\begin{prop}\label{prop:margulis_measures_compatible_with_product_structure}
Suppose that $f\colon M\to M$ is a uniform Anosov diffeomorphism with global product structure. Let $\mu^u$ be a system of Margulis measures from Corollary \ref{cor:margulis_measures_locally_holonomy_invariant}.
For a fixed point $p$ of $f$, we will define a map $\pi_p$ by 
\begin{equation}\label{eq:def_of_pi_p}
(x,y)\in W^s(p)\times W^u(p)\mapsto z,
\end{equation}
where $z$ is the unique point satisfying both
\begin{equation}\label{eqn:interval_measures_agree_defn}
\mu^u_{W^u(y)}([y,z])=\mu^u_{W^u(p)}([p,x])\text{ and } \mu^s_{W^s(x)}([x,z])=\mu^s_{W^s(p)}([p,y]),
\end{equation}
and such that $[y,z], [p,x]$ have the same orientation and $[x,z]$ and $[p,y]$ have the same orientation. The map $\pi_p$ is the universal cover of $M$.
\end{prop}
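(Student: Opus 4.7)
The plan is to realize $\pi_p$ as the Smale-bracket map lifted to the universal cover $\tilde{M}$ and to use the Margulis measures to coordinatize $W^s(p) \times W^u(p)$ so that the measure equations in \eqref{eqn:interval_measures_agree_defn} express exactly this identification. The key preliminary is to upgrade Lemma \ref{halfArcInf} to every leaf: given a leaf $W^u(z)$, density of periodic points produces a periodic unstable leaf $W^u(q)$ accumulating on a prescribed compact arc $I \subset W^u(z)$; combining full support from Theorem \ref{thm:main_thm}(1), stable-holonomy invariance from Corollary \ref{cor:margulis_measures_locally_holonomy_invariant}, and conformality $\mu^u \circ f^{-1} = e^{-h} \mu^u$ under iteration, one transports arbitrarily long pieces of $W^u(q)$ into $W^u(z)$, forcing $\mu^u_{W^u(z)}$ to assign infinite mass to each ray. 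With this in hand, the assignments $x \mapsto \mu^s_{W^s(p)}([p, x])$ and $y \mapsto \mu^u_{W^u(p)}([p, y])$ (signed by fixed orientations) are continuous monotone surjections onto $\R$, yielding identifications $W^s(p) \cong \R$ and $W^u(p) \cong \R$ modulo atoms.

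Next I would define $\pi_p(x, y)$ to be the Smale-bracket point $W^u(x) \cap W^s(y)$ computed in $\tilde{M}$ under the global product structure and then projected to $M$. Local stable holonomy from $\tilde{W}^u(\tilde{p})$ to $\tilde{W}^u(\tilde{x})$ carries the interval $[p, y]$ to $[x, z]$ and is Margulis-measure preserving, while the symmetric unstable holonomy from $\tilde{W}^s(\tilde{p})$ to $\tilde{W}^s(\tilde{y})$ sends $[p, x]$ to $[y, z]$; these two identities are precisely the geometric content of \eqref{eqn:interval_measures_agree_defn}, and uniqueness of a $z$ satisfying both conditions follows from full support together with the infinite-mass property just established. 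Continuity of $\pi_p$ then reduces to continuity of the local Smale bracket and of the measure parameterizations, and local injectivity (hence the local-homeomorphism property) follows from uniqueness of $z$.

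Finally, fix a lift $\tilde{p} \in \tilde{M}$. Global product structure provides a homeomorphism $\Phi \colon \tilde{M} \to \tilde{W}^s(\tilde{p}) \times \tilde{W}^u(\tilde{p})$ via the inverse bracket, and the covering projection $\tilde{M} \to M$ identifies $\tilde{W}^s(\tilde{p}) \cong W^s(p)$ and $\tilde{W}^u(\tilde{p}) \cong W^u(p)$ because these are simply-connected $1$-manifolds. Tracing through the definitions, $\pi_p$ is the composition $W^s(p) \times W^u(p) \xrightarrow{\Phi^{-1}} \tilde{M} \to M$; hence it is surjective, and its fibers correspond to the $\pi_1(M, p)$-orbit of $\tilde{p}$ in $\tilde{M} \cong \R^2$, which is discrete. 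Combined with the local-homeomorphism property and simple connectivity of $\R^2$, this shows that $\pi_p$ is the universal cover. The main obstacle is the bootstrap in the first step: without infinite Margulis mass on every leaf the equations in \eqref{eqn:interval_measures_agree_defn} need not admit a solution $z$, and this is the point where density of periodic points and uniform geometry must interact most delicately with the Margulis measures.
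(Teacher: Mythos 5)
Your second and third paragraphs match the paper's argument essentially verbatim: define $\pi_p$ as the global Smale bracket supplied by product structure, verify the measure identities in \eqref{eqn:interval_measures_agree_defn} by chaining local holonomies (which preserve the Margulis measures by Corollary~\ref{cor:margulis_measures_locally_holonomy_invariant}), pin down uniqueness of $z$ by full support, and conclude via the standard covering-space argument. That part is sound.

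The problem is your first step, the ``key preliminary'' of bootstrapping Lemma~\ref{halfArcInf} from periodic rays to every ray before defining $\pi_p$. As written, the mechanism is not there: a periodic leaf $W^u(q)$ accumulating on a compact arc $I\subset W^u(z)$ only supplies \emph{local} holonomies onto small subarcs near $I$, whose images land in a set of finite $\mu^u$-mass; neither full support, nor local holonomy invariance, nor conformality lets you ``transport arbitrarily long pieces of $W^u(q)$ into $W^u(z)$'' without invoking the global product structure to guarantee that a long stable holonomy from $W^u(q)$ to $W^u(z)$ is defined at all. This is exactly the failure mode the remark following the proposition warns about---a holonomy between an infinite-measure leaf and a (potentially) finite-measure leaf need not extend. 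The paper sidesteps the issue entirely: it first builds $P$ from the product structure, then checks \eqref{eqn:interval_measures_agree_defn} inside large foliation boxes (which exist \emph{because of} product structure), and only a posteriori deduces that every leaf has infinite mass. Note also that your first step is unnecessary even for your own argument: the parametrizations $x\mapsto\mu^s_{W^s(p)}([p,x])$, $y\mapsto\mu^u_{W^u(p)}([p,y])$ live on the (periodic) leaves through the fixed point $p$, so Lemma~\ref{halfArcInf} already applies directly, and existence of $z$ is supplied by the Smale bracket rather than by an a priori infinite-mass argument. If you delete the first step and instead invoke product structure at the point where you need holonomies to extend, your proof reduces to the paper's.
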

\begin{proof}
Let $\wt{M}$ denote the universal cover of $M$ and $\wt{W}^s$ and $\wt{W}^u$ be the lifts of the stable and unstable foliations of $f$ to $\wt{M}$. Let $P\colon \R\times \R\to \wt{M}$ be the map that shows that $\wt{W}^u$ and $\wt{W}^s$ have global product structure in the universal cover with the point $(0,0)$ mapping to some point in $\pi^{-1}(p)$. Then clearly $\pi\circ P$ is the universal cover of $M$.

It then suffices to check that this map $P$ satisfies the property in \eqref{eqn:interval_measures_agree_defn}. Certainly, due to the full support of $\mu^u$ and $\mu^s$, as long as it is defined any map $\pi_p$ satisfying \eqref{eqn:interval_measures_agree_defn} is uniquely defined up to the choice of basepoint $p$. Note now that the property \eqref{eqn:interval_measures_agree_defn} is satisfied within any foliation box due to the holonomy invariance of the Margulis measures. The global product structure shows that for any $x,y,z$ as above, we may include them into the image of a single large foliation box in the universal cover, and the result follows.
\end{proof}

The proof of Proposition \ref{prop:margulis_measures_compatible_with_product_structure} makes use of the global product structure. A posteriori, it follows that for every leaf of $W^u$ that $W^u(x)$ has infinite measure with respect to $\mu^u$. In fact, if we had known that every leaf had infinite volume a priori, then we could have deduced that there was global product structure for these foliations along similar lines. This is the argument of Hiraide \cite{hiraide2001simple} that codimension 1 Anosov diffeomorphisms have global product structure. The application in this paper could also be concluded if we had assumed instead that every leaf had infinite measure.  Note that if we tried to define the universal cover by \eqref{eqn:interval_measures_agree_defn} but every leaf did not have infinite measure, then we would not necessarily have been able to define the map: consider for instance the holonomy between an infinite measure leaf and a finite measure leaf.

We say that a foliation $\mc{F}$ of a manifold $M$ is $\R$-covered when the lift of $\mc{F}$ to the universal cover $\wt{M}$ of $M$ has leaf space isomorphic to $\R$. See, e.g.~\cite[p.~82]{fenley1994anosov}.

\begin{prop}\label{prop:margulis_measures_imply_R_covered}
Suppose that $f\colon M\to M$ is a uniform Anosov diffeomorphism and that $f$ has Margulis measures $\mu^u$ as in Theorem \ref{thm:main_thm} that assign infinite measure to each unstable leaf. Then $W^s$ is $\R$-covered.
\end{prop}
\begin{proof}
The proof follows the ideas of Hiraide. There are some minor complications only  because the measures we consider might have atoms. We can construct a covering map analogously with the construction in Proposition \ref{prop:margulis_measures_compatible_with_product_structure}. 

Take a fixed point $p$ of $f$ and leaves $W^s(p)$ and $W^u(p)$. Then we define a map $\pi\colon W^s(p)\times W^u(p)\to M$ by $\pi(z,w)=q$ exactly when $q$ is the point in $W^u(z)$ such that $\mu^u([p,w])=\mu^u([z,q])$ and this agrees with the orientation where $W^u(z)$ meets $W^s(p)$ at $z$. We must now check that there is such a point $q$. Note next that for any $z\in W^s(p)$ there is a product neighborhood containing $z$ whose $W^{u}$ plaques are arbitrarily long. Hence locally the definition of $\pi$ makes sense; what we need to do is check that we can extend this definition from an initial product neighborhood containing $p\in W^s(p)$. Fix some $r>0$ and $M>0$. Then, using that the Margulis measure of each $W^u(p)$ leaf is infinite, for any $z\in B_r(p)\subseteq W^s(p)$, we may find a product neighborhood of $z$ containing all points $q'$ with $\mu^u([w,q'])\le M$ where $w$ is the corresponding point in $W^s(p)$. These neighborhoods give an open cover of $B_r(p)$. By compactness of $B_r(p)\subseteq W^s(p)$, we can pass to a finite subcover of this set. Holonomy invariance of the Margulis measure then gives the needed conclusion on this subset. Thus the map $\pi$ is well defined.

We now check that this map defines the universal cover. We must check that $\pi$ is a continuous, surjective, local homeomorphism. Essentially all of these points follow from working in a small neighborhood of a product neighborhood $(z,w)\in W^s(p)\times W^u(p)$. Continuity is immediate from the full support of the Margulis measures. The map is a local injection because it is increasing with respect to the Margulis measures along $W^u$ leaves. Hence due to invariance of domain, it is a local homeomorphism as it is a continuous, local injection. Finally, the map is a surjection because $W^{u}(p)$ is dense by assumption as $p$ is a periodic point; surjectivity then follows from the uniform transversality of the stable and unstable foliations.
\end{proof}

Once the stable foliation is known to be $\R$-covered, one can conclude that the fundamental group of $M$ is abelian. 
Thomas Barthelme pointed out the following proof to us, which he told us was also known to Jana Rodriguez Hertz and Raul Ures.

\begin{prop}\label{prop:R_covered_implies_abelian_pi_1}
Suppose that $M$ is a closed or open surface admitting a foliation $\mc{F}$ that is $\R$-covered and has no closed leaves, such as $\mc{W}^s$. Then $M$ has abelian fundamental group. 
\end{prop}

\begin{proof}
Consider the lift of $\mc{F}$ to $\wt{M}$, $\wt{\mc{F}}$. Now consider the action of $\pi_1(M)$ on the leaf space of $\wt{\mc{F}}$. Note that the action must be free: otherwise $\mc{F}$ would contain a closed leaf. As $\pi_1(M)$ acts freely on $\R$, it follows that $\pi_1(M)$ is an Archimedean group. But by H\"older's theorem \cite[Thm.~6.10]{ghys2001groups}, Archimedean groups are abelian.  
\end{proof}

In particular Theorem \ref{thm:main} now follows.

\begin{proof}[Proof of Theorem \ref{thm:main}.]

From Proposition \ref{prop:margulis_measures_imply_R_covered} and Proposition \ref{prop:R_covered_implies_abelian_pi_1}, it follows that $\pi_1(M)$ is abelian. From the classification of open surfaces \cite[\textsection~4.2.2]{stillwell1993classical}, this implies that $M$ is either $\R^2$ or $\mathbb{S}^1\times \R$, or a M\"obius strip. In the case of $\R^2$, the result of Mendes \cite[Proposition~1.2]{mendes1977anosov} implies that $f$ has at most one non-wandering point; but this is impossible because the periodic points of $f$ are dense. If $M$ is diffeomorphic to a cylinder \cite[Lemma 4.3]{hammerlindl2019ergodicity} shows that the cylinder does not support Anosov dynamics with dense periodic points. A similar argument to the one in \cite[Lemma~4.3]{hammerlindl2019ergodicity} shows that Anosov dynamics with dense periodic points and dense $\mc{W}^{u/s}$ foliations are not possible on the M\"obius strip. Thus we have exhausted all possibilities for the topology of $M$ and are done.
\end{proof}

\bibliographystyle{alpha}
\bibliography{Refs}

\Addresses

\end{document}